\DeclareFontFamily{U}{tipa}{}
\DeclareFontShape{U}{tipa}{m}{n}{<->tipa10}{}
\newcommand\reallywidetilde[1]{\ThisStyle{%
  \setbox0=\hbox{$\SavedStyle#1$}%
  \stackengine{-.1\LMpt}{$\SavedStyle#1$}{%
    \stretchto{\scaleto{\SavedStyle\mkern.2mu\AC}{.5150\wd0}}{.6\ht0}%
  }{O}{c}{F}{T}{S}%
}}
\newcommand\reallywidehat[1]{\arraycolsep=0pt\relax%
\begin{array}{c}
\stretchto{
  \scaleto{
    \scalerel*[\widthof{\ensuremath{#1}}]{\kern-.5pt\bigwedge\kern-.5pt}
    {\rule[-\textheight/2]{1ex}{\textheight}} 
  }{\textheight} %
}{0.5ex}\\           
#1\\                 
\rule{-1ex}{0ex}
\end{array}
}
\newcommand{\arc@char}{{\usefont{U}{tipa}{m}{n}\symbol{62}}}%
\newcommand{\arc}[1]{\mathpalette\arc@arc{#1}}
\newcommand{\arc@arc}[2]{%
  \sbox0{$\m@th#1#2$}%
  \vbox{
    \hbox{\resizebox{\wd0}{\height}{\arc@char}}
    \nointerlineskip
    \box0
  }%
}
\theoremstyle{definition}
\newtheorem{theorem}{Theorem}[section]
\newtheorem{thm}[theorem]{Theorem}
\newtheorem{prop}[theorem]{Proposition}
\newtheorem{defn}[theorem]{Definition}
\newtheorem{lemma}[theorem]{Lemma}
\newtheorem{cor}[theorem]{Corollary}
\newtheorem{prop-def}[theorem]{Proposition-Definition}
\newtheorem{rema}[theorem]{Remark}
\newtheorem{exam}[theorem]{Example}
\newtheorem{nota}[theorem]{Notation}
\newcommand{\N}{{\mathbb N}}
\newcommand{\C}{{\mathbb C}}
\newcommand{\Z}{{\mathbb Z}}
\renewcommand{\H}{{\mathcal H}}
\newcommand{\Hom}{\textrm{Hom}}
\newcommand{\one}{\mathbf{1}}
\renewcommand{\d}{\mathbf{d}}
\newcommand{\wt}{\text{wt}}
\newcommand{\Res}{\text{Res}}
\newcommand{\g}{{\mathfrak g}}
\newcommand{\h}{{\mathfrak h}}
\newcommand{\Ext}{\text{Ext}}
\newcommand{\CC}{\mathcal{C}}
\newcommand{\Ind}{\text{Ind}}
\begin{document}

\setlength{\oddsidemargin}{0cm} \setlength{\evensidemargin}{0cm}
\baselineskip=18pt

\title[On Ext of left modules for a MOSVA, I]{On the extensions of the left modules for a meromorphic open-string vertex algebra, I}
\author{Fei Qi}

\begin{abstract}
We study the extensions of two left modules $W_1, W_2$ for a meromorphic open-string vertex algebra $V$. We show that the extensions satisfying some technical but natural convergence conditions are in bijective correspondence to the first cohomology classes associated to the $V$-bimodule $\H_N(W_1, W_2)$ constructed in \cite{HQ-Red}. When $V$ is grading-restricted and contains a nice vertex subalgebra $V_0$, those convergence conditions hold automatically. In addition, we show that the dimension of $\Ext^1(W_1, W_2)$ is bounded above by the fusion rule $N\binom{W_2}{VW_1}$ in the category of $V_0$-modules. In particular, if the fusion rule is finite, then $\Ext^1(W_1, W_2)$ is finite-dimensional. We also give an example of an abelian category consisting of certain modules of the Virasoro VOA that does not contain any nice subalgebras, while the convergence conditions hold for every object. 
\end{abstract}

\maketitle

\section{Introduction}

In the representation theory of various algebras (including, but not limited to, commutative associative algebras, associative algebras, Lie algebras, etc.), one of the main tools is the cohomological method. The powerful method of homological algebra often provides a unified treatment of many results in representation theory, giving not only solutions to open problems, but also conceptual understandings to the results.

Vertex operator algebras (VOAs hereafter) arose naturally in both mathematics and physics (see \cite{BPZ}, \cite{B}, and \cite{FLM}) and are analogous to both Lie algebras and commutative associative algebras. In \cite{Hcoh}, Yi-Zhi Huang introduced two cohomology theories for grading-restricted vertex algebras and modules that are analogous to Hochschild and Harrison cohomology for commutative associative algebras. Huang's construction uses linear maps from the tensor powers of the vertex algebra to suitable spaces of ``rational functions valued in the algebraic completion of a $V$-module'' satisfying some natural conditions, including a technical convergence condition. Geometrically, this cohomology theory is consistent with the geometric and operadic formulation of VOA. Algebraically, the first and second cohomologies are given by grading-preserving derivations and first-order deformations, similarly to those for commutative associative algebras. 

In \cite{Q-Coh}, the author generalized Huang's work on the Hochschild-like cohomology in \cite{Hcoh}, and introduced a Hochschild-like cohomology theory for meromorphic open-string vertex algebras (a non-commutative generalization of VOAs) and their bimodules, analogous to the Hochschild cohomology for (not-necessarily commutative) associative algebras and their bimodules. As an application of the cohomology theory, in an early version of \cite{HQ-Red}, Huang and the author proved that if $V$ is a meromorphic open-string vertex algebra such that the first cohomology $\widehat{H}^1(V, W) = 0$ for every $V$-bimodule $W$, then every left $V$-module satisfying a technical but natural convergence condition is completely reducible. See also the author's Ph.D. thesis \cite{Q-Thesis}. 

We remark that the proof in the early version of \cite{HQ-Red} and in \cite{Q-Thesis} essentially uses only a particular type of $V$-bimodule $W$, namely the bimodule $\H_N(W_1, W_2)$ whose construction takes over the major part of the paper. Thus it is not necessary to require $\widehat{H}^1(V, W)= 0$ for every $V$-bimodule. Indeed, after the author's graduation, we found that $\widehat{H}^1(V, W)$ is nonzero for a large class of vertex operator algebras whose module category is semisimple. In these cases, $\widehat{H}^1(V, W)$ is given by the space $Z^1(V, W)$ of zero-mode derivations. The final version of \cite{HQ-Red} proves the same conclusion with the revised assumption: for every $V$-bimodule $W$, the first cohomology $\widehat{H}^1(V, W)=Z^1(V, W)$. It is conjectured that if every $V$-module is semisimple, then $\widehat{H}^1(V, W) = Z^1(V, W)$. The conjecture is supported by \cite{Q-1st-coh}, which computes $\widehat{H}^1(V, W)$ for the most common examples of VOA $V$ and $V$-modules $W$. 

The current paper pushes forward our understanding of the cohomology theory of a MOSVA $V$. Let $W_1, W_2$ be two left $V$-modules. We study the space $\Ext_N^1(W_1, W_2)$ consisting of (equivalence classes of) extensions of $W_1$ by $W_2$ satisfying the technical and natural convergence conditions imposed in \cite{HQ-Red}, namely, the composability condition and the $N$-weight-degree condition (where the subscript $N$ comes from). We establish a bijective correspondence between $\Ext^1_N(W_1, W_2)$ and the first cohomology $\widehat{H}^1(V, \H_N(W_1, W_2))$. With this bijective correspondence, it is clear that if every $V$-module is semisimple, $\widehat{H}^1(V, \H_N(W_1, W_2)) = 0$. This corollary provides a converse to the conclusion in \cite{HQ-Red} (though weaker than the conjecture). It also explains why we totally ignored the zero-mode derivations in the early version of \cite{HQ-Red} and in \cite{Q-Thesis}: they simply do not exist in such context. This result means a lot to the author, who views it as a final defense to his thesis \cite{Q-Thesis}. 

In \cite{HQ-Red}, it is shown that if $V$ is a grading-restricted vertex algebra containing a nice vertex subalgebra $V_0$, then there exists some number $N$ such that the composability and the $N$-weight-degree conditions automatically holds. In the current paper, we further show the following: if $V$ is a grading-restricted MOSVA containing a nice vertex subalgebra $V_0$, then the convergence conditions hold automatically for every extension of $W_2$ and $W_1$. In other words, the $\Ext^1(W_1, W_2)$ in the usual sense coincides with $\Ext_N^1(W_1, W_2)$ for some $N$. Moreover, the cohomology class in $\hat{H}^1(V, \H_N(W_1,W_2))$ corresponding to an extension in $\Ext^1(W_1, W_2)$ can be represented as a $V_0$-intertwining operator of type $\binom{W_2}{VW_1}$. This representation indeed gives an embedding of $\Ext^1(W_1, W_2)$ into a quotient of the space of $V_0$-intertwining operators of type $\binom{W_2}{VW_1}$. In particular, if the fusion rule $N\binom{W_2}{VW_1}$ in the category of $V_0$-module is finite, then $\Ext^1(W_1, W_2)$ is finite-dimensional.  Therefore, we can expect to compute $\Ext^1(W_1, W_2)$ explicitly using the information of $V_0$-modules and $V_0$-intertwining operators. 

The proposed program for computing $\Ext^1(W_1, W_2)$ does not have much requirements on $V_0$ and $V$. $V_0$ is not required to conformally embed in $V_0$. The category of $V$-modules are not required to admit tensor bifunctors. $V$ does not even need to be commutative. One class of important example is the case where $V$ is an affine VOA associated with a Lie algebra, $V_0$ is the Heisenberg subalgebra associated with the Cartan subalgebra, and $W_1, W_2$ are weight modules where the Heisenberg acts semisimply. The program requires no restrictions on the level. We expect the results to be useful for the study the representation theory of affine VOA at nonadmissible level. 

The current paper also shows that if the convergence conditions hold for a module $W$, then they hold for every submodule and (sub)quotient of the module $W$. This leads us to consider the category of modules satisfying the convergence conditions. However, generally speaking, such a category is not necessarily abelian as it is not closed under direct sums. Nevertheless, we construct a concrete abelian category consisting of subquotients of finite direct sums of Verma modules for the Virasoro VOA of all types except for the type $III_\pm$ (as in Feigin-Fuchs classification in \cite{Ash}, or the Case $1^\pm$ as in \cite{IK}, or the thick block as in \cite{BNW}). We show that the convergence conditions hold automatically for every object in the category. So in this case, the convergence conditions hold automatically even when the VOA $V$ does not contain any nice subalgebras. This hints that the convergence conditions we need might not be a serious obstruction for us to apply results in this paper, though more studies are necessary to understand them further. 

The paper is organized as follows: Section 2 reviews the necessary prerequisites, including the definitions of MOSVAs and modules, cohomology theory, and the construction of the bimodule $\H_N(W_1, W_2)$ for two left $V$-modules $W_1, W_2$. Section 3 defines $\Ext^1_N(W_1, W_2)$ and establishes the bijective correspondence to $\widehat{H}^1(V, \H_N(W_1, W_2))$. Section 4 first investigates category $\CC_N$ of modules satisfying the convergence conditions, then discusses the case when $V$ contains a nice vertex subalgebra $V_0$ and realizes cohomology classes as $V_0$-intertwining operators, and finally concludes with the estimate of $\dim \Ext^1(W_1, W_2)$ by the fusion rule $N\binom{W_2}{VW_1}$.  Section 5 discusses the example of Virasoro VOA and gives the example of the abelian category where the convergence conditions hold for every object. 

\noindent \textbf{Acknowledgement. } I would like to thank Yi-Zhi Huang for his long-term constant support, especially for his encouragement in writing down the convergence results for the Virasoro VOA. I would also like to thank Kenji Iohara for answering my questions about the representation theory of Virasoro algebra and providing the idea for Theorem \ref{submodule-classify}. Thanks also to Florencia Hunziker, Shashank Kanade, Andrew Linshaw, Jiayin Pan, and Eric Schippers for discussing various aspects of the current work. Finally, my deepest gratitude to the anonymous Reviewer 1, whose meticulous review and constructive suggestions greatly improved the paper.

\section{Preliminaries}

\subsection{Meromorphic open-string vertex algebra and its modules} We briefly review the definitions of the MOSVA, its left module, right module, and bimodules. Please find further details in \cite{H-MOSVA}, \cite{Q-Mod} and \cite{Q-2d-space-form}. 

\begin{defn}\label{DefMOSVA}
{\rm A {\it meromorphic open-string vertex algebra} (hereafter MOSVA) is a $\Z$-graded vector space 
$V=\coprod_{n\in\Z} V_{(n)}$ (graded by {\it weights}) equipped with a {\it vertex operator map}
\begin{eqnarray*}
   Y:  V\otimes V &\to & V[[x,x^{-1}]]\\
	u\otimes v &\mapsto& Y(u,x)v,
  \end{eqnarray*}
and a {\it vacuum} $\one\in V$, satisfying the following axioms:
\begin{enumerate}
\item Axioms for the grading:
\begin{enumerate}
\item {\it Lower bound condition}: When $n$ is sufficiently negative,
$V_{(n)}=0$.
\item {\it $\d$-commutator formula}: Let $\d_{V}: V\to V$
be defined by $\d_{V}v=nv$ for $v\in V_{(n)}$. Then for every $v\in V$
$$[\d_{V}, Y(v, x)]=x\frac{d}{dx}Y(v, x)+Y(\d_{V}v, x).$$
\end{enumerate}

\item Axioms for the vacuum: 
\begin{enumerate}
\item {\it Identity property}: Let $1_{V}$ be the identity operator on $V$. Then
$Y(\mathbf{1}, x)=1_{V}$. 
\item {\it Creation property}: For $u\in V$, $Y(u, x)\mathbf{1}\in V[[x]]$ and 
$\lim_{x\to 0}Y(u, x)\mathbf{1}=u$.
\end{enumerate}

\item {\it $D$-derivative property and $D$-commutator formula}:
Let $D_V: V\to V$ be the operator
given by
$$D_{V}v=\lim_{x\to 0}\frac{d}{dx}Y(v, x)\one$$
for $v\in V$. Then for $v\in V$,
$$\frac{d}{dx}Y(v, x)=Y(D_{V}v, x)=[D_{V}, Y(v, x)].$$

\item {\it Weak associativity with pole-order condition}: For every $u_1, v\in V$, there exists $p\in \mathbb{N}$ such that for every $u_2\in V$, 
$$(x_0+x_2)^p Y(u_1, x_0+x_2)Y(u_2, x_2)v = (x_0+x_2)^p Y(Y(u_1, x_0)u_2, x_2)v.$$

\end{enumerate}  }
\end{defn}

\begin{prop-def}\label{oppMOSVA}
Let $(V, Y, \one)$ be a MOSVA. Define the \textit{skew-symmetry vertex operator} as follows: 
$$\begin{aligned}
&Y^s: & V\otimes V &\to V[[x, x^{-1}]]\\
&& u\otimes v &\mapsto e^{xD_V} Y(v, -x)u. 
\end{aligned}$$
Then $(V, Y^s, \one)$ is also a MOSVA, called the \textit{opposite MOSVA} of $(V, Y, \one)$, denoted by $V^{op}$. Clearly $(V^{op})^{op} = V$.
\end{prop-def}

\begin{defn}\label{DefMOSVA-L}
Let $V$ be a MOSVA.
A \textit{left $V$-module} is a $\C$-graded vector space 
$W=\coprod_{m\in \C}W_{[m]}$ (graded by \textit{weights}), equipped with 
a \textit{vertex operator map}
\begin{eqnarray*}
Y_W^L: V\otimes W & \to & W[[x, x^{-1}]]\\
u\otimes w & \mapsto & Y_W^L(u, x)w,
\end{eqnarray*}
an operator $\d_{W}$ of weight $0$, 
satisfying the following axioms:
\begin{enumerate}

\item Axioms for the grading: 
\begin{enumerate}
\item \textit{Lower bound condition}:  When $\text{Re}{(m)}$ is sufficiently negative,
$W_{[m]}=0$. 
\item  \textit{$\mathbf{d}$-grading condition}: for every $w\in W_{[m]}$, $\d_W w = m w$.
\item  \textit{$\mathbf{d}$-commutator formula}: For $u\in V$, 
$$[\mathbf{d}_{W}, Y_W^L(u,x)]= Y_W^L(\mathbf{d}_{V}u,x)+x\frac{d}{dx}Y_W^L(u,x).$$
\end{enumerate}

\item The \textit{identity property}:
$Y_W^L(\one,x)=1_{W}$.

\item The \textit{$D$-derivative property}: 
For $u\in V$,
\begin{eqnarray*}
\frac{d}{dx}Y_W^L(u, x)=Y_W^L(D_{V}u, x). 
\end{eqnarray*}

\item {\it Weak associativity with pole-order condition}: For every $v_1\in V, w\in W$, there exists $p\in \mathbb{N}$ such that for every $v_2\in V$, 
$$(x_0+x_2)^p Y_W^L(v_1, x_0+x_2)Y_W^L(v_2, x_2)w = (x_0+x_2)^p Y_W^L(Y(v_1, x_0)v_2, x_2)w. $$
\end{enumerate} 
\end{defn}

\begin{rema}
    In \cite{H-MOSVA} and \cite{Q-Mod}, the definition of modules also contains a requirement on an operator $D_W: W \to W$ of weight 1 satisfying the $D$-commutator formula
    $$[D_W, Y_W^L(v, x)] = \frac{d}{dx}Y_W^L(v, x)$$
    for every $v\in V$. We should provide a few comments here. 
    \begin{enumerate}
        \item It should be noted that this requirement does not follow from the general philosophy ``all the axioms in the definition of algebra that make sense hold'' (see also the discussion of non-conformal vertex algebras in \cite{LL}, Remark 4.1.4). While for many non-conformal vertex algebras, such a $D_W$-operator naturally exists in their modules, the author recently found an example of MOSVA and left module where such a $D_W$-operator cannot exist (see \cite{Q-Fermion-2}, Remark 5.8).
        \item Nevertheless, if $V$ is a VOA with a conformal element $\omega$, then a left $V$-module in the sense of Definition \ref{DefMOSVA-L} agrees with the notion of generalized $V$-module in the sense of Definition 2.2 of \cite{Hcoh} (see \cite{LL}, Theorem 4.4.5). In this case, such a $D_W$-operator exists and can be taken as $\Res_{x=0}Y_W(\omega, x)$. 
    \end{enumerate} 
    In this paper, we will drop this condition for the most general left $V$-module and denote it by $(W, Y_W^L, \d_W)$. We will use the terminology ``left $V$-module $W$ with a $D_W$-operator'' to describe the situation when we need such an operator and will denote it by $(W, Y_W^L, \d_W, D_W)$. 
\end{rema}

\begin{rema}
    Theorem 3.4 of \cite{Q-Mod} proves the convergence of the product of more than three vertex operators with the assumption of a $D_W$-operator. But we may remove that assumption by replacing the part using $e^{yD_W}Y_W^L(u, x)e^{-yD_W} = Y_W^L(u, x+y)$ with $Y_W^L(e^{yD_V}u, x) = Y_W^L(u, x+y)$. The conclusion still holds. 
\end{rema}

\begin{defn}
Let $W_1$, $W_2$ be two left $V$-modules. A linear map $f: W_1\to W_2$ is a homomorphism if $f \d_{W_1} = \d_{W_2} f$, and for every $v\in V$, $fY_{W_1}(v, x) = Y_{W_2}(v, x)f$. 
\end{defn}

\begin{defn}\label{right-module}
Let $V$ be a MOSVA.
A \textit{right $V$-module} is a $\C$-graded vector space 
$W=\coprod_{m\in \C}W_{[m]}$ (graded by \textit{weights}), equipped with 
a \textit{vertex operator} 
\begin{eqnarray*}
Y_W^{s(R)}: V\otimes W & \to & W[[x, x^{-1}]]\\
u\otimes w & \mapsto & Y_W^{s(R)}(u, x)w,
\end{eqnarray*}
an operator $\d_{W}$ of weight $0$, such that $(W, Y_W^{s(R)}, \d_W)$ forms a left $V^{op}$-module. 
\end{defn}

\begin{rema}
If in addition, $(W, Y_W^{s(R)}, \d_W, D_W)$ is a left $V^{op}$-module with a $D_W$-operator, then one may define a vertex operator $Y_{W}^R: W\otimes V \to W[[x, x^{-1}]]$ via the skew-symmetry relation
$$Y_W^R(w, x)v = e^{xD_W} Y_W^{s(R)}(v, -x)w. $$
The operator $Y_W^R$ is analogous to an intertwining operator of type $\binom{W}{WV}$. One may use $Y_W^R$ to give an equivalent definition of the right $V$-module with a $D_W$-operator (see \cite{Q-Mod}). 
\end{rema}

\begin{defn}
Let $(V, Y, \one)$ be a MOSVA. A \textit{$V$-bimodule} is a vector space equipped with a left $V$-module structure and right $V$-module structure such that these two structures are compatible. 
More precisely, a \textit{$V$-bimodule} is a $\C$-graded vector space 
$$W=\coprod_{n\in \C}W_{[n]}$$
equipped with two \textit{vertex operators}
\begin{align*}
    \begin{aligned}
Y_{W}^{L}: V\otimes W&\to W[[x, x^{-1}]]\\
u\otimes w&\mapsto Y_{W}^{L}(u, x)v,
\end{aligned} &  &  \begin{aligned}
Y_{W}^{s(R)}: V\otimes W&\to W[[x, x^{-1}]]\\
u\otimes w&\mapsto Y_{W}^{s(R)}(u, x)w,
\end{aligned}
\end{align*}
and linear operators $\d_W$ on $W$ satisfying the following conditions.
\begin{enumerate}

 \item $(W, Y_W^L, \d_W)$ is a left $V$-module.

 \item $(W, Y_W^{s(R)}, \d_W)$ is a left $V^{op}$-module.
 
 \item {\it Compatibility with pole-order condition}: For every $v_1, v_2\in V$, there exists $p\in \N$ such that for every $w\in W$, 
$$(x_1-x_2)^p Y_W^L(v_1, x_1)Y_W^{s(R)}(v_2, x_2)w = (x_1-x_2)^p Y_W^{s(R)}(v_2, x_2)Y_W^L(v_1, x_1)w. $$
\end{enumerate}
\end{defn}

\begin{rema}

The conditions we imposed here are sufficient to guarantee that for every $w'\in W'=\coprod_{n\in \C}W_{(n)}^*$ (the restricted dual of $W$), $v_1, ..., v_{l}, v_{l+1}, ..., v_{l+r}\in V$, $w\in W$, the series
$$\langle w', Y_W^{L}(v_1, z_1)\cdots Y_W^{L}(v_l, z_l)Y_W^{s(R)}(v_{l+1}, z_{l+1})\cdots Y_W^{s(R)}(v_{l+r}, z_{l+r})w\rangle
$$
converges absolutely in the region $$|z_1|>\cdots >|z_l|>|z_{l+1}|>\cdots >|z_{l+r}|>0$$ 
to a rational function in $z_1, ..., z_{l}, z_{l+1}, ..., z_{l+r}$ with the only possible poles at $z_i = 0$ $(i = 1, ..., l+r)$ and $z_i = z_j$ $(1 \leq i < j \leq l+r)$. Please see \cite{Q-Mod} for further details. 
\end{rema}

\begin{rema}
    For the cohomology theory, we will require in addition that there exists a $D_W$-operator, such that $(W, Y_W^L, \d_W, D_W)$ forms a left $V$-module with a $D_W$-operator, and $(W, Y_W^{s(R)}, \d_W, D_W)$ forms a left $V^{op}$-module with \textit{the same} $D_W$-operator. Starting from this point, all $V$-bimodules are equipped with such a $D_W$-operator. 
\end{rema}

\subsection{Cohomology theory of meromorphic open-string vertex algebras} We briefly recall the definition of cohomologies for MOSVA. Please see \cite{Hcoh} and \cite{Q-Coh} for further details.

\begin{defn}\label{arcWRat} 
For $n\in \Z_+$, we consider the configuration space
$$F_n \C = \{(z_1, ..., z_n)\in \C^n: z_i \neq z_j, i\neq j\}.$$
Let $W= \coprod_{m\in \C}$ be a $\C$-graded vector space. A \textit{$\overline{W}$-valued rational function in $z_1, ..., z_n$ with the only possible poles at $z_i = z_j, i\neq j$} is a map
$$\begin{aligned}
f:  F_n \C &\to \overline{W} = \prod_{m\in \C}W_{[m]}\\
(z_1, ..., z_n)&\mapsto f(z_1, ..., z_n)
\end{aligned}$$
such that 
\begin{enumerate}
\item For any $w'\in W'$, 
$$\langle w', f(z_1, ..., z_n)\rangle$$
is a rational function in $z_1, ..., z_n$ with the only possible poles at $z_i = z_j, i\neq j$.
\item There exists integers $p_{ij}, 1\leq i < j \leq n$ and a formal series $g(x_1, ..., x_n)\in W[[x_1, ..., x_n]]$, such that for every $w'\in W'$ and $(z_1, ..., z_n)\in F_n\C$,  
$$\prod_{1\leq i < j \leq n} (z_i-z_j)^{p_{ij}} \langle w', f(z_1, ..., z_n)\rangle = \langle w', g(z_1, ..., z_n)\rangle$$
as a polynomial function. 
\end{enumerate}
The space of all such functions will be denoted by $\widetilde{W}_{z_1,...,z_n}$.

Similarly, by modifying (1), we define \textit{$\overline{W}$-valued rational function with only possible poles at $z_i = 0$ $(i = 1,...,n)$ and at $z_i = z_j$ $(1\leq i < j \leq n)$}. We denote the space of all such functions by $\widehat{W}_{z_1...z_n}$. 
\end{defn}

\begin{nota}
For a series $f(z_1, ..., z_n)\in W[[z_1, z_1^{-1}, ..., z_n, z_n^{-1}]]$ that converges absolutely to a $\overline{W}$-valued rational function, we will use the notation $E(f(z_1, ..., z_n))$ to denote the limit. For example, let $V$ be a MOSVA and $W$ be a bimodule. 
Then for $v_1, v_2\in V, w\in W$ and $w'\in W'$, both
\begin{equation}\label{E-nota-1}
\langle w', Y_W^{L}(v_1, z_1)Y_W^{s(R)}(v_2, z_2)w\rangle
\end{equation}
and 
\begin{equation}\label{E-nota-2}
    \langle w', Y_W^{s(R)}(v_2, z_2)Y_W^{L}(v_1, z_1)w\rangle
\end{equation}
converge absolutely to the same rational function in $z_1, z_2$ with poles at $z_1=0, z_2=0$ and $z_1=z_2$. Since the series (\ref{E-nota-1}) and (\ref{E-nota-2}) converge in disjoint regions, we cannot equate them. But we can say that
$$E(Y_W^L(v_1, z_1)Y_W^{s(R)}(v_2, z_2)w) = E(Y_W^{s(R)}(v_2, z_2) Y_W^L(v_1, z_1)w)$$
as elements in $\widehat{W}_{z_1z_2}$. 
\end{nota}

\begin{nota}
The cohomology theory of MOSVA is built upon the linear maps  $\Phi: V^{\otimes n}\to \widetilde{W}_{z_1...z_n}$. We will use the notation 
$$\Phi(v_1\otimes \cdots v_n; z_1, ..., z_n)$$
to denote the image of $v_1\otimes \cdots \otimes v_n$ in $\widetilde{W}_{z_1...z_n}$. 
\end{nota}

\begin{defn}
A linear map $\Phi: V^{\otimes n} \to \widetilde{W}_{z_1, ..., z_n}$ is said to have the $D$-derivative property if
\begin{enumerate}
\item For $i=1,..., n$, $v_1, ..., v_n\in V, w'\in W'$, 
$$ \langle w', \Phi(v_1\otimes \cdots\otimes v_{i-1} \otimes D_V v_i \otimes v_{i+1}\otimes \cdots  \otimes v_n;z_1, ..., z_n)\rangle =\frac{\partial}{\partial z_i} \langle w', \Phi(v_1\otimes \cdots \otimes v_n;z_1, ..., z_n)\rangle $$
\item For $v_1, ..., v_n\in V, w'\in W'$, 
$$\langle w', D_W (\Phi(v_1\otimes \cdots v_n;z_1, ..., z_n))\rangle=\left(\frac{\partial}{\partial z_1} + \cdots + \frac{\partial}{\partial z_n} \right) \langle w', \Phi(v_1\otimes \cdots \otimes v_n;z_1, ..., z_n)\rangle $$
\end{enumerate}
\end{defn}

\begin{defn}
A linear map $\Phi: V^{\otimes n} \to \widetilde{W}_{z_1, ..., z_n}$ is said to have the $\d$-conjugation property if for $v_1, ..., v_n\in V, w'\in W', (z_1, ..., z_n)\in F_n\C$ and $z\in \C^\times$ so that $(zz_1, ..., zz_n)\in F_n \C$, 
$$\langle w', z^{\d_W}\Phi(v_1\otimes \cdots \otimes v_n;z_1, ..., z_n)\rangle = \langle w', \Phi(z^{\d_V}v_1 \otimes \cdots z^{\d_V}v_n;zz_1, ..., zz_n)\rangle$$
\end{defn}

\begin{defn}\label{Composable}
Let $\Phi: V^{\otimes n} \to \widetilde{W}_{z_1, ..., z_n}$ be a linear map. Let $m\in \Z_+$. $\Phi$ is said to be composable with $m$ vertex operators if for every $\alpha_0, \alpha_1, ..., \alpha_n\in \Z_+$ such that $\alpha_0+\cdots + \alpha_n = m+n$, every $l_0= 0, ..., \alpha_0$, and every $u_{1}^{(0)}, ..., u_{\alpha_0}^{(0)}, ..., u_1^{(n)}, ..., u_{\alpha_n}^{(n)}\in V$, the series of $\overline{W}$-valued rational functions
\begin{align*}
& \begin{aligned} 
Y_W^L(u_1^{(0)}, z_1^{(0)})& \cdots Y_W^L(u_{l_0}^{(0)}, z_{l_0}^{(0)})Y_W^{s(R)}(u_{l_0+1}^{(0)}, z_{l_0+1}^{(0)}) \cdots Y_W^{s(R)}(u_{\alpha_0}^{(0)}, z_{\alpha_0}^{(0)})\\
\cdot \Phi(& Y(u_1^{(1)}, z_1^{(1)}-\zeta_1)\cdots Y(u_{\alpha_1}^{(1)}, z_{\alpha_1}^{(1)}-\zeta_1)\one \\
\otimes & \cdots \\
\otimes & Y(u_1^{(n)}, z_1^{(n)}-\zeta_n)\cdots Y(u_{\alpha_n}^{(n)}, z_{\alpha_n}^{(n)}-\zeta_n)\one;\zeta_1, ..., \zeta_n)
\end{aligned}\\
& \begin{aligned}
= \sum_{\substack{k_1^{(0)}, ..., k_{\alpha_0}^{(0)},\\ ..., k_1^{(n)}, ..., k_{\alpha_n}^{(n)}\in \Z}}(Y_W^L)&_{k_1^{(0)}}(u_1^{(0)})\cdots (Y_W^L)_{k_{l_0}^{(0)}}(u_{l_0}^{(0)})(Y_W^{s(R)})_{k_{l_0+1}^{(0)}}(u_{l_0+1}^{(0)})\cdots (Y_W^{s(R)})_{k_{\alpha_0}^{(0)}}(u_{\alpha_0}^{(0)})\\
\cdot \Phi(&(Y)_{k_1^{(1)}}(u_1^{(1)}) \cdots (Y)_{k_{\alpha_1}^{(1)}}(u_{\alpha_1}^{(1)})\one \\
\otimes & (Y)_{k_1^{(2)}}(u_1^{(2)}) \cdots (Y)_{k_{\alpha_2}^{(2)}}(u_{\alpha_2}^{(2)})\one  \\
\otimes & \cdots \\
\otimes & (Y)_{k_1^{(n)}}(u_1^{(n)}) \cdots (Y)_{k_{\alpha_n}^{(n)}}(u_{\alpha_n}^{(n)})\one;\zeta_1, ..., \zeta_n)\rangle \\
& \prod_{i=1}^{\alpha_0}(z_i^{(0)})^{-k_i^{(0)}-1}\prod_{i=1}^n \prod_{j=1}^{\alpha_i}(z_j^{(i)}-\zeta_i)^{-k_j^{(i)}-1}
\end{aligned}
\end{align*}
converges absolutely when 
\begin{align*}
& |z_1^{(0)}|> \cdots > |z_{\alpha_0}^{(0)}| > |\zeta_i|+|z_1^{(i)}-\zeta_i|, i = 1, ..., n;\\
& |z_1^{(i)}-\zeta_i|>\cdots > |z_{\alpha_i}^{(i)}-\zeta_i|, i = 1, ..., n; \\
& |z_s^{(i)}-\zeta_i - z_t^{(j)}+\zeta_j|<|\zeta_i-\zeta_j|, 1\leq i < j \leq	n, 1\leq s \leq \alpha_i, 1\leq t \leq \alpha_j.
\end{align*}
and the sum can be analytically extended to a rational function in $z_1^{(1)}, ..., z_{\alpha_1}^{(1)}, ..., z_1^{(n)}, ..., z_{\alpha_n}^{(n)}$ that is independent of $\zeta_1, ..., \zeta_n$ and has the only possible poles at $z_s^{(i)}= z_t^{(j)}$, for $1\leq i < j \leq n, s=1 ,..., \alpha_i, t = 1, ..., \alpha_j$. We require in addition that for each $i, j, s, t$, the order of the pole $z_s^{(i)}=z_t^{(j)}$ is bounded above by a constant that depends only on $u_s^{(i)}$ and $u_t^{(j)}$. 
\end{defn}

\begin{defn}
For $n\in \Z_+$, we define 
\begin{align*}
    \hat{C}_0^n(V, W) = \{\Phi: V^{\otimes n} \to \widetilde{W}_{z_1, ..., z_n} &| \Phi \text{ satisfies the $D$-derivative property and $\d$-conjugation property}\}.\\
    \hat{C}_\infty^n(V, W) =\{\Phi\in \hat{C}_0^n(V, W) &| \Phi \text{ is composable with any number of vertex operators}\}
\end{align*}
For $n=0$, we define 
$$\hat{C}^0(V, W)=\{w\in W_{(0)} | D_W w = 0\}, $$
i.e., $\hat{C}^0(V, W)$ consists of vacuum-like vectors in $W$. 
\end{defn}

\begin{prop-def}
Let $m,n\in \Z_+, \Phi \in \hat{C}_\infty^n(V, W)$. Then for every $i=1, ..., n$, every $v_1, ..., v_{n+1}\in V$, the following series \begin{align}
    \Phi(v_1 \otimes \cdots \otimes Y(v_i, z_i-\zeta) &Y(v_{i+1}, z_{i+1}-\zeta)\one \otimes v_{i+2} \otimes \cdots \otimes v_{n+1};z_1, ..., z_{i-1}, \zeta, z_{i+2}, ..., z_{n+1}),\label{Cobdry-mid}\\
    &Y_W^L(v_1, z_1)\Phi(v_{2}\otimes \cdots \otimes v_{n+1};z_{2}, ..., z_{n+1}),\label{Cobdry-first}\\
    &Y_W^{s(R)}(v_{n+1}, z_{n+1})\Phi(v_{1}\otimes \cdots \otimes v_{n};z_{1}, ..., z_{n})\label{Cobdry-last}
\end{align}
converge absolutely (in certain regions) to $\overline{W}$-valued rational functions depending only on $z_1, ..., z_{n+1}$ with the only possible poles at $z_i = z_j$ $(1\leq i < j \leq n+1)$. We will then define the maps
\begin{align*}
    \Phi\circ_i E_V^{(2)}: V^{\otimes (n+1)}\to \widetilde{W}_{z_1, ..., z_{n+1}}\\
    E_W^{(1, 0)}\circ_{2}\Phi:  V^{\otimes (n+1)}\to \widetilde{W}_{z_1, ..., z_{n+1}}\\
    E_W^{(0, 1)}\circ_{2}\Phi:  V^{\otimes (n+1)}\to \widetilde{W}_{z_1, ..., z_{n+1}}
\end{align*}
sending $v_1\otimes \cdots \otimes v_{n+1}$ to the respective $\overline{W}$-valued rational functions given by the limits of the series (\ref{Cobdry-mid}), (\ref{Cobdry-first}), (\ref{Cobdry-last})
\end{prop-def}


\begin{defn}
For $n \in \Z_+$, we define the coboundary operator as follows
$$\hat{\delta}_\infty^n: \hat{C}_\infty^n(V, W) \to \hat{C}_{\infty}^{n+1}(V, W)$$
by
$$ \hat{\delta}_\infty^n\Phi =  E_W^{(1, 0)}\circ_2 \Phi + \sum_{i=1}^{n} (-1)^i\Phi\circ_i E_V^{(2)} + (-1)^{n+1} E_W^{(0,1)}\circ_2 \Phi
$$
For $n=0$, we define $\hat{\delta}^0: \hat{C}^0(V, W)\to \hat{C}_{\infty}^1(V, W)$ by the following: for $w\in \hat{C}^0(V, W)$
$$[(\hat{\delta}^0(w))(v)](z) = E(Y_W^L(v, z)w-Y_W^{s(R)}(v, z)w)$$
\end{defn}

\begin{thm}
For every $n\in \Z_+$, \begin{align*}
    \hat{\delta}^0(\hat{C}^0(V, W))&\subseteq \hat{C}_\infty ^1(V, W), \\
    \hat{\delta}_\infty^n (\hat{C}_\infty^n(V, W)) &\subseteq \hat{C}_{\infty}^{n+1}(V, W). 
\end{align*}
Moreover, for every $n\in \Z_+$, we have $\hat{\delta}_\infty^1 \circ \hat{\delta^0} = 0$,  
and $\hat{\delta}_{\infty}^{n+1}\circ \hat{\delta}_\infty^n = 0$. 
The sequence
$$\hat{C}^0(V, W) \xrightarrow{\hat{\delta}^0} \hat{C}_\infty^1(V, W) \xrightarrow{\hat{\delta}^1_\infty} \hat{C}_\infty^2(V, W) \xrightarrow{\hat{\delta}^2_\infty} \hat{C}_\infty^3(V, W) \rightarrow \cdots \rightarrow \hat{C}_\infty^m(V, W) \rightarrow \cdots $$
forms a cochain complex. 
\end{thm}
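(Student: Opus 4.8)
The plan is to verify the two inclusions and the two cocycle identities, treating $\hat{\delta}^0$ and the general $\hat{\delta}_\infty^n$ in parallel wherever possible.

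\medskip

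\noindent\textbf{Step 1: The inclusions.} First I would show that the images of $\hat{\delta}^0$ and $\hat{\delta}_\infty^n$ land in the asserted cochain spaces. For $\hat{\delta}^0(w)$, one checks directly that $E(Y_W^L(v,z)w - Y_W^{s(R)}(v,z)w)$ has the $D$-derivative and $\d$-conjugation properties using the $D$-commutator/$\d$-commutator formulas for both module structures, together with $D_W w = 0$ and $w \in W_{(0)}$; composability with any number of vertex operators follows from the associativity and commutativity axioms of the bimodule (expressed via the absolute convergence in the Remark after the bimodule definition). For $\hat{\delta}_\infty^n\Phi$, each of the three building blocks $E_W^{(1,0)}\circ_2\Phi$, $\Phi\circ_i E_V^{(2)}$, $E_W^{(0,1)}\circ_2\Phi$ is already known (by the Proposition-Definition just quoted) to define an element of $\widetilde{W}_{z_1,\dots,z_{n+1}}$; I would then check that each inherits the $D$-derivative property, the $\d$-conjugation property, and — this is the substantive part — composability with any number of vertex operators, by iterating the associativity used to define these operations and invoking the pole-order bounds built into Definition \ref{Composable}. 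The linear combination then lies in $\hat{C}_\infty^{n+1}(V,W)$.

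\medskip

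\noindent\textbf{Step 2: $\hat{\delta}_\infty^1\circ\hat{\delta}^0 = 0$.} Expanding, $\hat{\delta}_\infty^1(\hat{\delta}^0 w)$ evaluated on $v_1\otimes v_2$ is a sum of four terms: $Y_W^L(v_1,z_1)$ applied to $E(Y_W^L(v_2,z_2)w - Y_W^{s(R)}(v_2,z_2)w)$, minus the ``middle'' term coming from $\hat{\delta}^0 w$ precomposed with $E_V^{(2)}$, plus $Y_W^{s(R)}(v_2,z_2)$ applied to $E(Y_W^L(v_1,z_1)w - Y_W^{s(R)}(v_1,z_1)w)$. The middle term, using weak associativity and the definition of $\hat{\delta}^0$, equals $E(Y_W^L(Y(v_1,z_1-z_2)v_2,z_2)w) - E(Y_W^{s(R)}(Y^s(v_1,z_1-z_2)v_2,z_2)w)$, which by weak associativity for the left module and for the left $V^{op}$-module rewrites as $E(Y_W^L(v_1,z_1)Y_W^L(v_2,z_2)w) - E(Y_W^{s(R)}(v_1,z_1)Y_W^{s(R)}(v_2,z_2)w)$. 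After substituting and using the compatibility (commutativity) of $Y_W^L$ and $Y_W^{s(R)}$ to identify $E(Y_W^L(v_1,z_1)Y_W^{s(R)}(v_2,z_2)w) = E(Y_W^{s(R)}(v_2,z_2)Y_W^L(v_1,z_1)w)$ as elements of $\widehat{W}_{z_1z_2}$, all six resulting rational functions cancel in pairs.

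\medskip

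\noindent\textbf{Step 3: $\hat{\delta}_\infty^{n+1}\circ\hat{\delta}_\infty^n = 0$.} This is the formal simplicial-type identity: one writes out the $(n+2)$-fold alternating sum $\hat{\delta}_\infty^{n+1}(\hat{\delta}_\infty^n\Phi)$ and matches terms. The terms of the form $(\Phi\circ_i E_V^{(2)})\circ_j E_V^{(2)}$ pair up using associativity of the iterated vertex operator insertion $E_V^{(2)}$ (i.e. $(E_V^{(2)}\circ_1 E_V^{(2)})$-type relations reflecting weak associativity of $Y$), exactly as in the Hochschild differential; the cross terms involving one $E_W^{(1,0)}\circ_2$ (or $E_W^{(0,1)}\circ_2$) and one $\Phi\circ_i E_V^{(2)}$ pair up because inserting a vertex operator at the first (resp. last) slot commutes with inserting one at an interior slot; and the two terms $E_W^{(1,0)}\circ_2(E_W^{(1,0)}\circ_2\Phi)$ and $E_W^{(0,1)}\circ_2(E_W^{(0,1)}\circ_2\Phi)$ reduce to iterated $Y_W^L Y_W^L$ and $Y_W^{s(R)}Y_W^{s(R)}$ which are themselves of the ``$\Phi\circ_i E_V^{(2)}$'' form via weak associativity, while the remaining $E_W^{(1,0)}\circ_2 E_W^{(0,1)}\circ_2\Phi$ cross term is handled by the compatibility axiom. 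Since everything lives in the spaces of rational functions, the bookkeeping of signs is identical to the classical Hochschild computation; the only extra input is that each algebraic identity used is first an identity of absolutely convergent series in a suitable domain and then an identity of rational functions after analytic continuation. Once these two identities hold, the displayed sequence is a cochain complex by definition.

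\medskip

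\noindent\textbf{Main obstacle.} The genuinely delicate point is Step 1's verification that the three elementary operations preserve composability with \emph{arbitrarily many} vertex operators, including the uniform pole-order bound: one must show that iterating an associativity/commutativity rewrite does not destroy the rationality or blow up pole orders beyond a bound depending only on the inserted vectors. This is where the pole-order conditions in the axioms for $V$, for left modules, and for the bimodule compatibility are essential, and where one must be careful that the regions of convergence for the various iterated products overlap so that the analytic continuations genuinely agree. The sign-chasing in Steps 2 and 3, by contrast, is routine once the convergence and rationality are in hand.
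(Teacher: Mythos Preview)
The paper does not prove this theorem. It is stated in the preliminaries (Section~2.2) as a known result, with the sentence ``Please see \cite{Hcoh} and \cite{Q-Coh} for further details'' at the head of the subsection; no argument is given in the paper itself. So there is no ``paper's own proof'' to compare against.

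That said, your outline is broadly the right one and matches the strategy in the cited references: check that the elementary operations $E_W^{(1,0)}\circ_2(-)$, $(-)\circ_i E_V^{(2)}$, $E_W^{(0,1)}\circ_2(-)$ preserve the $D$-derivative, $\d$-conjugation, and composability properties (the last being the nontrivial analytic step), then do the Hochschild-style sign cancellation. One small correction in Step~2: when you unwind the middle term $\hat{\delta}^0(w)\circ_1 E_V^{(2)}$, the input to $\hat{\delta}^0(w)$ is $Y(v_1,z_1-z_2)v_2$, with $Y$ (not $Y^s$), so the right-module piece is $E(Y_W^{s(R)}(Y(v_1,z_1-z_2)v_2,z_2)w)$. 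To rewrite this as an iterated product you must first use skew-symmetry $Y(v_1,z_1-z_2)v_2 = e^{(z_1-z_2)D_V}Y^s(v_2,z_2-z_1)v_1$ together with the $D$-derivative property of $Y_W^{s(R)}$, and \emph{then} apply weak associativity for the $V^{op}$-module structure; the result is $E(Y_W^{s(R)}(v_2,z_2)Y_W^{s(R)}(v_1,z_1)w)$, after which the six-term cancellation goes through. Your identification of the main obstacle---uniform pole-order control under iterated composition---is exactly right.
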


\begin{defn} For every $n\in \N$, the \textit{$n$-th cohomology group} is defined as 
$$\hat{H}_\infty^n(V, W)=\text{Ker} \hat{\delta}_\infty^n / \text{Im} \hat{\delta}_\infty^{n-1}.$$
Elements in $\text{Ker} \hat{\delta}_\infty^n$ are called $n$-cocycles. Elements in $\text{Im} \hat{\delta}_\infty^{n-1}$ are called $n$-coboundaries.
\end{defn}

\begin{exam}
We describe the first cohomology explicitly. 

\begin{enumerate}
    \item Let $\Phi: V\to \widetilde{W}_{z}$ be a 1-cocycle. Then the map $V\to W$ given by $v\mapsto \Phi(v;0)$ is a grading-preserving derivation, i.e., 
    $$\Phi(Y(u,x)v;0) = Y_W^L(u, x)\Phi(v;0) + e^{xD_W} Y_W^{s(R)}(v, -x)\Phi(u;0).$$
    Conversely, let $f: V\to W$ be a grading-preserving derivation, then the map $v\mapsto e^{zD_W} f(v)$ gives a 1-cocycle. 
    \item Let $\Phi: V\to \widetilde{W}_{z}$ be a 1-coboundary. Then the map $V\to W$ given by $v\mapsto \Phi(v;0)$ is a grading-preserving inner-derivation, i.e., there exists a vacuum-like vector $w\in W$, such that 
    $$\Phi(v;0) = \lim_{x\to 0}Y_W^L(v, x)w - \lim_{x\to 0} Y_W^{s(R)}(v, x)w.$$
    Conversely, let $f: V\to W$ be a grading-preserving inner derivation, then the map $v\mapsto e^{zD_W} f(v)$ gives a 1-coboundary. 
    \item Thus $\hat{H}_\infty^1(V, W) = \{\text{grading-preserving derivations}\} / \{\text{grading-preserving inner derivations}\}$. For brevity, we shall use $\hat{H}^1(V, W)$ for the first cohomology group hereafter. 
\end{enumerate}

\end{exam}

\subsection{The bimodule $\H_N(W_1, W_2)$} We briefly review the definition of the $V$-bimodule $\H_N(W_1, W_2)$ associated with two left $V$-modules $W_1$ and $W_2$. Please find further details in \cite{Q-Thesis} and \cite{HQ-Red}. 
\begin{prop-def}\label{Def-H_N}
Let $V$ be a MOSVA. Let $W_{1}$ and $W_{2}$ be two left $V$-modules. 
Let $\widehat{(W_{2})}_{\zeta}$ be the space of $\overline{W_{2}}$-valued rational 
functions with the only possible pole at $\zeta=0$. Recall that $\widetilde{(W_{2})}_{\zeta}$ is the space of 
$\overline{W_{2}}$-valued holomorphic functions. Thus $\widehat{(W_{2})}_{\zeta}\supset \widetilde{(W_{2})}_{\zeta}$.

Let $\H_N(W_1, W_2)$ be the subspace  of $\Hom_\C(W_{1}, \widehat{(W_{2})}_{\zeta})$ spanned by
elements, denoted by 
$$\phi: w_1\mapsto \phi(\zeta)w_1 \in \widehat{(W_2)}_\zeta$$ satisfying the following conditions:
\begin{enumerate}
\item The {\it $\mathbf{d}$-conjugation property}: There exists $n\in \Z$ (called the {\it weight of $\phi$}
and denoted by $\wt(\phi)$) such that 
for $a\in \C^{\times}$ and $w_{1}\in W_{1}$,
$$a^{\mathbf{d}_{W_{2}}}(\phi(\zeta)w_1)=a^{n}(\phi(a\zeta)a^{\mathbf{d}_{W_{1}}}w_{1}).$$

\item The {\it composability condition}: For $k, l\in \N$ and $v_{1}, \dots, v_{k+l}\in V$, $w_{1}\in W_{1}$
and $w_{2}'\in W_{2}'$,
the series 
\begin{equation}\label{Def-H_N-series}
    \left\langle w_{2}', Y_{W_{2}}(v_{1}, z_{1})\cdots Y_{W_{2}}(v_{k}, z_{k})
\phi(\zeta)Y_{W_{1}}(v_{k+1}, z_{k+1})\cdots Y_{W_{1}}(v_{k+l}, z_{k+l})w_{1}\right\rangle
\end{equation}is absolutely convergent in the region $|z_{1}|>\cdots >|z_{k}|>|\zeta|>|z_{k+1}|>\cdots |z_{k+l}|>0$
to a rational function 
\begin{equation}\label{r-fun}
f(z_1, ..., z_k, \zeta, z_{k+1}, ..., z_{k+l})
\end{equation}
in $z_{1}, \dots, z_{k+l}$ and $\zeta$ with the only possible 
poles $z_{i}=0$ for $i=1, \dots, k+l$, $\zeta=0$, $z_{i}=z_{j}$ for $i, j=1, \dots, k+l$, $i\ne j$ and 
$z_{i}=\zeta$ for $i=1, \dots, k+l$. Moreover, there 
exist $r_{i}\in \N$ depending only on the pair $(v_{i}, w_{1})$ for $i=1, \dots, k+l$, $m\in \N$
depending only on the pair $(\phi, w_{1})$, $p_{ij} \in \N$ depending only on the pair $(v_{i}, v_{j})$
for $i, j=1, \dots, k+l$, $i\ne j$, $s_{i}\in \N$ depending only on the pair $(v_{i}, \phi)$  for $i=1, \dots, k+l$
and $g(z_{1}, \dots, z_{k+l}, \zeta)\in W_{2}[[z_{1}, \dots, z_{k+l}, \zeta]]$ such that for $w_{2}'\in W_{2}'$, 
\begin{align*}
\zeta^{m}\prod_{i=1}^{k+l}&z_{i}^{r_{i}}\prod_{1\le i<j\le k+l}(z_{i}-z_{j})^{p_{ij}}
\prod_{i=1}^{k+l}(z_{i}-\zeta)^{s_{i}}\cdot f(z_1, ..., z_k, \zeta, z_{k+1}, ..., z_{k+l})
\end{align*}
is a polynomial and is equal to 
$\langle w_{2}', g(z_{1}, \dots, z_{k+l}, \zeta)\rangle$.
\item The {\it $N$-weight-degree condition}: Expand $f(z_1, ..., z_k, \zeta, z_{k+1}, ..., z_{k+l}) $ in the region $|z_{k+l}|>|z_{1}-z_{k+l}|>\cdots >|z_{k}-z_{k+l}|>|\zeta-z_{k+l}|>|z_{k+1}-z_{k+l}|>\cdots
>|z_{k+l-1}-z_{k+l}|>0$ as a Laurent series in $z_{i}-z_{k+l}$ for $i=1, \dots, k+l-1$ and $\zeta-z_{k+l}$
with Laurent polynomials in $z_{k+l}$ as coefficients. Then the total degree of each monomial 
in $z_{i}-z_{k+l}$ for $i=1, \dots, k+l-1$ and $\zeta-z_{k+l}$ (that is, the sum of the powers of 
$z_{i}-z_{k+l}$ for $i=1, \dots, k+l-1$ and $\zeta-z_{k+l}$) in the expansion 
is greater than or equal to $N-\sum_{i=1}^{k+l}\wt v_{i}-\wt \phi$.
\end{enumerate}
Let $\H_N(W_1, W_2)_{[n]}$ be the 
subspace of $\H_N(W_1, W_2)$ consisting of elements of weight $n$. Then 
$$\H_N(W_1, W_2)=\coprod_{n\in \Z}\H_N(W_1, W_2)_{[n]}.$$ 
We define a $V$-bimodule structure using the left and right vertex operator maps:
\begin{align*}
    Y_\H^L: V\otimes \H_N(W_1, W_2) &\to \H_N(W_1, W_2)[[x, x^{-1}]]\\
    v\otimes \phi&\mapsto Y_\H^L(v,x)\phi\\
    Y_\H^{s(R)}: V\otimes \H_N(W_1, W_2) &\to \H_N(W_1, W_2)[[x, x^{-1}]]\\
    v\otimes \phi&\mapsto Y_\H^{s(R)}(v,x)\phi
\end{align*}
defined by 
\begin{align}
    \left(Y_\H^L(v, x)\phi\right)(\zeta)w_1 &= \iota_{\zeta x} E\left(Y_{W_2}(v, x + \zeta) \phi(\zeta)w_1\right), \\
    \left(Y_\H^{s(R)}(v, x)\phi\right)(\zeta)w_1 &= \iota_{\zeta x} E\left(\phi(\zeta)Y_{W_1}(v, x+\zeta)w_1\right). 
\end{align}
Define also the operators $\mathbf{d}_{\H}$ and $D_{\H}$ on $\H_N(W_1, W_2)$ 
$$\mathbf{d}_{\H}\phi=n\phi\text{ for }\phi\in \H_N(W_1, W_2)_{[n]}.$$  
$$(D_{\H}\phi)(\zeta)w_1=\frac{\partial}{\partial \zeta}(\phi(\zeta)w_1)$$
for $\phi\in \H_N(W_1, W_2)$, $w_{1}\in W_{1}$. Then $(\H_N(W_1, W_2), Y_\H^L, Y_\H^{s(R)}, \d_\H, D_\H)$ forms a $V$-bimodule with lowest $\d_\H$-weight being $N$. 
\end{prop-def}

\begin{rema}
The convergence conditions mentioned in the abstract are precisely Condition (3) and (4) in Proposition-Definition \ref{Def-H_N}. See also Definition \ref{composibility} and Remark \ref{composibility-formal}. 
\end{rema}

\begin{rema}
    The following example may help the reader to understand the $N$-weight-degree condition. Let $W_1 = W_2 = W$. Fix $v\in V$. Let $\phi(\zeta) = Y_W(v, \zeta)$. Then $\phi$ is homogeneous with $\wt(\phi) = \wt(v)$. For $w_2'\in W', w_1\in W$, the series (\ref{Def-H_N-series}) is now precisely 
    \begin{align}
        \left\langle w_{2}', Y_W(v_{1}, z_{1})\cdots Y_W(v_{k}, z_{k})
        Y_W(v, \zeta)Y_W(v_{k+1}, z_{k+1})\cdots Y_W(v_{k+l}, z_{k+l})w_{1}\right\rangle\label{N-wt-deg-ex-1}
    \end{align}
    that converges to a rational function satisfying the pole-order condition. From associativity, we see the series
    \begin{align}
        & \quad \begin{aligned}\big\langle w_{2}', & Y_W(Y(v_{1}, z_{1}-z_{k+l})\cdots Y(v_{k}, z_{k}-z_{k+l})\nonumber \\
        & \cdot Y(v, \zeta-z_{k+l})Y(v_{k+1}, z_{k+1}-z_{k+l})\cdots Y(v_{k+l-1}, z_{k+l-1}-z_{k+l})v_{k+l}, z_{k+l})w_{1}\big\rangle\end{aligned}\label{N-wt-deg-ex-2}\\
        & \begin{aligned}= \sum_{m_1, ..., m, ..., m_{k+l-1}\in \Z} \langle w_2', &Y_W(Y_{m_1}(v_1)\cdots Y_{m}(v) \cdots Y_{m_{k+l-1}}(v_{k+l-1})v_{k+l}, z_{k+l})w_1\rangle \\
        & \cdot (z_1-z_{k+l})^{-m_1-1}\cdots (\zeta-z_{k+l})^{-m-1} \cdots (z_{k+l-1}-z_{k+l})^{-m_{k+l-1}-1}\end{aligned}
    \end{align}
    is the expansion of the rational function given by (\ref{N-wt-deg-ex-1}) in the region specified in the $N$-weight-degree condition. If $N$ is the lowest weight of elements in $V$, then necessarily, 
    \begin{align*}
        & \wt v_1- m_1-1  + \cdots + \wt v - m - 1 + \cdots + \wt v_{k+l-1}-m_{k+l-1}-1 + \wt v_{k+l} \geq N\\
        \Rightarrow\ & -m_1-1 -\cdots - m -1 - \cdots - m_{k+l-1}-1\geq N - \sum_{i=1}^{k+l} \wt v_i - \wt \phi
    \end{align*}
    In other words, if we view (\ref{N-wt-deg-ex-2}) as a series in $(\C[z_{k+l}, z_{k+l}^{-1}])[[z_1-z_{k+l}, (z_1-z_{k+l})^{-1}, ..., \zeta-z_{k+l}, (\zeta-z_{k+l})^{-1}, ..., z_{k+l-1}-z_{k+l}, (z_{k+l-1}-z_{k+l})^{-1}]]$, then the total degree of $z_1-z_{k+l}, ..., \zeta - z_{k+l}, ..., z_{k+l-1}-z_{k+l}$ is at least as large as $N - \sum_{i=1}^{k+l} \wt v_i - \wt \phi$. If $0$ is the lowest weight of $V$, then we may take $N=0$. 
\end{rema}

\begin{rema}
    Note that the $D_\H$ operator on $\H_N(W_1, W_2)$ does not require the existence of $D$-operators on the left $V$-modules $W_1$ or $W_2$. 
\end{rema}

\section{Extensions and the first cohomology}\label{Section-3}

\subsection{Extensions of left modules}

\begin{defn}
    Let $W_1 = (W_1, Y_{W_1}^L, \d_{W_1})$ and $W_2 = (W_2, Y_{W_2}^L, \d_{W_2})$ be two left $V$-modules. Let $U = (U, Y_U^L, \d_{U})$ be a left $V$-module that fits in the exact sequence
$$\xymatrix{ 0 \ar[r] & W_2 \ar[r] & U \ar[r]^p & W_1 \ar[r] & 0.  }$$
We call $U$ an extension of $W_1$ by $W_2$. We will say that $W_2$ is a submodule of $U$ without distinguishing $W_2$ and its image in $U$. Two extensions $U^{(1)}$, $U^{(2)}$ are equivalent, if there exists a homomorphism $T: U^{(1)}\to U^{(2)}$, such that the following diagram commutes
\begin{align}
    \begin{aligned}\xymatrix{ 0 \ar[r] & W_2 \ar[r] \ar@{=}[d] & U^{(1)} \ar[r]^{p^{(1)}} \ar[d]^T & W_1 \ar[r] \ar@{=}[d] & 0
\\
0 \ar[r] & W_2 \ar[r] & U^{(2)} \ar[r]^{p^{(2)}} & W_1 \ar[r] & 0
} \end{aligned}\label{CommDiag}
\end{align}
\end{defn}

\begin{rema}
    We emphasize that in Definition 3.1, the modules $W_1$, $W_2$ and $U$ are \textit{not} required to have $D_{W_1}$-, $D_{W_2}$- and $D_U$-operators. Even when $D_{W_1}$- and $D_{W_2}$-operators exist for $W_1$ and $W_2$, we do not require the existence of a $D_U$-operator on $U$. In particular, if $V$ is a non-conformal vertex algebra and $W_1, W_2$ have $D$-operators, $D_U$ may still be nonexistent. On the other hand, if $V$ is a VOA with a conformal element $\omega$, then a $D_U$-operator naturally exists from $\Res_{x=0}Y_U(\omega, x)$. 
\end{rema}

\begin{defn}\label{Ext_N-def}
Let $\Ext_N^1(W_1, W_2)$ be the set of equivalence classes of extensions $U$ satisfying the following additional property: there exists a grading-preserving linear map $\psi: W_1 \to U$ that is the section of the \textit{linear map} $p: U\to W_1$ (i.e. $\psi \circ p =Id_{W_1}$), such that $U = W_2 \amalg \psi(W_1)$ as vector spaces, and for every $v\in V$, the map 
$$\pi_2 Y_U(v, \zeta) \psi \in \H_N(W_1, W_2). $$
where $\pi_2: W_2\amalg \psi(W_1) \to W_2$ is the projection operator. 
\end{defn}

\begin{rema}
We will keep the $\oplus$ notation for direct sums of $V$-modules. The notation $W_1\amalg W_2$ means we have vector space direct sum of $W_1$ and $W_2$ that does not respect the $V$-actions. 
\end{rema}

\begin{prop}\label{pi-one-module}
Let $U, \psi, \pi_2$ be as in Definition \ref{Ext_N-def}. Let $\pi_1 = 1-\pi_2: U \to \psi(W_1)$. Then $(\psi(W_1), \pi_1Y_U, \d_U|_{\psi(W_1)})$ forms a left $V$-module. Moreover, the map $\psi: (W_1, Y_{W_1}, \d_{W_1}) \to (\psi(W_1), \pi_1 Y_U, \d_{U}|_{\psi(W_1)})$ is a left $V$-module isomorphism.
\end{prop}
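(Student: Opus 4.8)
The plan is to verify directly that $(\psi(W_1), \pi_1 Y_U, \d_U|_{\psi(W_1)}, \pi_1 D_U|_{\psi(W_1)})$ satisfies the axioms of a left $V$-module (Definition \ref{DefMOSVA-L}), and then check that $\psi$ intertwines this structure with the given structure on $W_1$. The key observation that makes everything work is that $W_2$ is a submodule of $U$, so $\pi_2 Y_U(v,\zeta)$ maps into $W_2$ and $\pi_1 Y_U(v,\zeta) = Y_U(v,\zeta) - \pi_2 Y_U(v,\zeta)$ acts on $\psi(W_1)$; moreover, because $p: U \to W_1$ has kernel $W_2$ and $p|_{\psi(W_1)}$ is a linear isomorphism onto $W_1$, the map $\psi$ is its inverse composed with the inclusion, and $\pi_1 = \psi \circ p$. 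This last identity is the computational engine: for $u \in \psi(W_1)$, $\pi_1 u = u$, and for arbitrary $u \in U$, $p(\pi_1 u) = p(u)$.

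First I would establish that $\psi: W_1 \to \psi(W_1)$ is a grading-preserving linear isomorphism compatible with $\d$ and $D$ up to the projection, using that $p$ is a module homomorphism: since $p \d_U = \d_{W_1} p$ and $\d_U$ preserves $W_2$, one gets $\d_U|_{\psi(W_1)}$ preserves $\psi(W_1)$ and corresponds under $\psi$ to $\d_{W_1}$; similarly for $D$ after composing with $\pi_1$. Next I would transport each module axiom. For the grading axioms: the lower bound condition for $\psi(W_1)$ follows from that of $U$ (or of $W_1$ via the isomorphism); the $\d$-grading condition is immediate from the previous paragraph; the $\d$-commutator formula for $\pi_1 Y_U$ follows by applying $\pi_1$ to the $\d$-commutator formula for $Y_U$ and using that $\pi_1$ commutes with $\d_U$ (because $\d_U$ preserves both $W_2$ and $\psi(W_1)$) and with $\frac{d}{d\zeta}$. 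For the identity property, $Y_U(\one,\zeta) = 1_U$, so $\pi_1 Y_U(\one,\zeta)|_{\psi(W_1)} = \pi_1|_{\psi(W_1)} = 1_{\psi(W_1)}$. The $D$-derivative and $D$-commutator formulas follow the same pattern: apply $\pi_1$ to the corresponding identities for $Y_U$ and use that $\pi_1 D_U|_{\psi(W_1)}$ is the relevant operator together with the fact that $\pi_1$ commutes with $\frac{d}{d\zeta}$ and with $D_V$-insertion.

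The main obstacle — and the only step that is not a one-line application of $\pi_1$ to an axiom for $U$ — is weak associativity with the pole-order condition. Applying $\pi_1$ naively to the weak associativity identity for $Y_U$ does not immediately give weak associativity for $\pi_1 Y_U$, because $\pi_1(Y_U(v_1,x_0+x_2)Y_U(v_2,x_2)w)$ is not $\pi_1 Y_U(v_1, x_0+x_2)\, \pi_1 Y_U(v_2, x_2)w$ in general: the inner $Y_U(v_2,x_2)w$ may have a $W_2$-component, and then the outer $Y_U(v_1, x_0+x_2)$ acting on that $W_2$-component lands in $W_2$ and is killed by the outer $\pi_1$, but only provided the outer $\pi_1$ is applied last. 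The resolution is to use the identity $\pi_1 = \psi p$ and the fact that $p$ is a homomorphism: $p \circ \pi_1 Y_U(v,\zeta)|_{\psi(W_1)} = p Y_U(v,\zeta)|_{\psi(W_1)} = Y_{W_1}(v,\zeta) p|_{\psi(W_1)} = Y_{W_1}(v,\zeta) \psi^{-1}$, which already shows $\psi^{-1}(\pi_1 Y_U(v,\zeta)\psi(\cdot)) = Y_{W_1}(v,\zeta)(\cdot)$, i.e. $\psi$ conjugates $\pi_1 Y_U$ into $Y_{W_1}$. Once this conjugation identity is in hand, all the module axioms for $(\psi(W_1), \pi_1 Y_U, \ldots)$ — including weak associativity with the pole-order condition — are obtained by transporting the corresponding axioms for $(W_1, Y_{W_1}, \ldots)$ through the linear isomorphism $\psi$, and the final claim that $\psi$ is a module isomorphism is then exactly this conjugation identity together with the compatibility with $\d$ and $D$ proved above. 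So in fact I would reorganize the proof to prove the conjugation identity $\psi^{-1} \pi_1 Y_U(v,\zeta) \psi = Y_{W_1}(v,\zeta)$ first, and deduce both the module structure and the isomorphism statement from it simultaneously.
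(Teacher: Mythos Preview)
Your proposal is correct. Your approach and the paper's share the same underlying mechanism (the identity $p\pi_1 = p$ and the fact that $W_2$ is a submodule, so $\pi_1$ kills anything that has passed through $W_2$), but they are organized oppositely: the paper first verifies the $D$-commutator formula and weak associativity for $\pi_1 Y_U$ directly, by applying $\pi_1$ to the corresponding axiom for $Y_U$, inserting $\pi_1+\pi_2$ between the operators, and observing that the $\pi_2$-terms vanish because $Y_U$ and $D_U$ preserve $W_2$; only afterwards does it use $p$ to establish the isomorphism. Your worry that weak associativity ``is not a one-line application of $\pi_1$'' is thus slightly misplaced --- the paper's direct insertion trick handles it in exactly the same way as the $D$-commutator formula. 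Your reorganization (establish $\psi^{-1}\pi_1 Y_U(v,\zeta)\psi = Y_{W_1}(v,\zeta)$ first and then transport all axioms from $W_1$) is equally valid and arguably more economical, since it avoids verifying the module axioms twice; the paper's order has the minor expository advantage of making explicit \emph{why} $\pi_1 Y_U$ satisfies each axiom on its own terms before invoking the isomorphism.
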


\begin{proof}
For the first part, we only verify 
weak associativity. Other axioms follow trivially. 


From the weak associativity on $U$, for every $u\in U, w_1\in W_1$, we have $p\in \N$, such that for every $v\in V$, 
\begin{equation}
    (x_0+x_2)^p Y_U(Y(u, x_0)v,x_2)\psi(w_1) = (x_0+x_2)^p Y_U(u, x_0+x_2)Y_U(v,x_2)\psi(w_1)\label{pi-one-module-0}
\end{equation}
Apply $\pi_1$ on both sides of (\ref{pi-one-module-0}). Then 
\begin{align}
    (x_0+x_2)^p \pi_1 Y_U(Y(u, x_0)v,x_2)\psi(w_1) &= (x_0+x_2)^p \pi_1 Y_U(u, x_0+x_2) (\pi_1+\pi_2) Y_U(v, x_2)\psi(w_1)\nonumber\\
    &= (x_0+x_2)^p \pi_1 Y_U(u, x_0+x_2) \pi_1 Y_U(v, x_2)\psi(w_1) \label{pi-one-module-1}\\
    &\quad + (x_0+x_2)^p \pi_1 Y_U(u, x_0+x_2) \pi_2 Y_U(v, x_2)\psi(w_1)\label{pi-one-module-2}
\end{align}
However, since $W_2$ is a submodule, $\pi_1 Y_U(u, x)w_2 = 0$ for every $w_2\in W_2$. Thus (\ref{pi-one-module-2}) is zero. Thus we see that 
$$(x_0+x_2)^p \pi_1 Y_U(Y(u, x_0)v,x_2)\psi(w_1)=(x_0+x_2)^p \pi_1 Y_U(u, x_0+x_2) \pi_1 Y_U(v, x_2)\psi(w_1). $$
So we have shown the weak associativity. 

Now we show that $\psi$ is an isomorphism. Let $p: U\to W_1$ be the homomorphism in the exact sequence. Then 
$$p Y_U(v, x) \psi(w_1) = Y_{W_1}(v, x) p \psi (w_1) = Y_{W_1}(v, x) w_1 $$
Note that the left-hand-side can also be written as 
$$p (\pi_1+\pi_2) Y_U(v, x)\psi(w_1)= p\pi_1 Y_U(v,x)\psi(w_1)$$
because $p$ restricted to $W_2$ is zero. Thus we see that $p \pi_1 Y_U(v, x)\psi(w_1) = Y_{W_1}(v, x)w_1$. 
Thus $p|_{\psi(W_1)}$ is an isomorphism from $\psi(W_1)$ to $W_1$. Obviously $\psi$ is the inverse of $p|_{\psi(W_1)}$. Thus we see that $\psi$ is an isomorphism. 
\end{proof}

\subsection{Derivation from an extension in $\Ext_N^1(W_1, W_2)$}

\begin{prop}\label{Ext-der-prop}
    Let $U$ be an extension of $W_1$ by $W_2$, $\psi: W_1\to U$ be the section of $p: U\to W_1$, $\pi_2: U\to W_2$ be the projection operator as in Definition \ref{Ext_N-def}. Then the map
    \begin{align*}
        F: V & \to \H_N(W_1, W_2)\\
        v& \mapsto  \pi_2 Y_U(v,\zeta) \psi
    \end{align*}    
    is a derivation.
\end{prop}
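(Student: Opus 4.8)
The plan is to verify the defining equation of a derivation (as recorded in the Example describing the first cohomology), namely that for all $u,v\in V$,
$$F(Y(u,x)v) = Y_\H^L(u,x)F(v) + Y_\H^{s(R)}(v,-x)e^{xD_\H}F(u),$$
or equivalently, after passing through the $D$-derivative property and the translation by $e^{zD_\H}$, that the associated $1$-cochain $\Phi(v;z) := e^{zD_\H}F(v)$ is a $1$-cocycle. Concretely I would show that $F$ is a grading-preserving derivation in the sense of the Example: $F$ is grading-preserving because $Y_U(v,\zeta)$ and $\psi$ interact with $\d_U$, $\d_{W_1}$, $\d_{W_2}$ compatibly (using that $W_2\hookrightarrow U$ and $\psi$ is a $\d$-intertwiner onto $\psi(W_1)$ by Proposition \ref{pi-one-module}), and then verify the derivation identity by unwinding $\pi_2 Y_U(\cdot,\zeta)\psi$ against the weak associativity axiom on $U$ and the bimodule structure maps $Y_\H^L, Y_\H^{s(R)}$ from Proposition-Definition \ref{Def-H_N}.

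The key steps, in order, are as follows. First, check that $F$ indeed lands in $\H_N(W_1,W_2)$: this is part of the hypothesis in Definition \ref{Ext_N-def}, so $F(v)=\pi_2 Y_U(v,\zeta)\psi$ satisfies the $D$-derivative, $\d$-conjugation, composability, and $N$-weight-degree conditions; I would just note that the weight of $F(v)$ is $\wt v$, so $F$ is grading-preserving. Second, compute $\left(Y_\H^L(u,x)F(v)\right)(\zeta)w_1 = \iota_{\zeta x} E\left(Y_{W_2}(u,x+\zeta)\pi_2 Y_U(v,\zeta)\psi(w_1)\right)$. Since $W_2$ is a submodule and $\pi_2$ projects onto it, $Y_{W_2}(u,x+\zeta)\pi_2 = \pi_2 Y_U(u,x+\zeta)\pi_2$ on $U$; moreover $\pi_2 Y_U(u,x+\zeta)\pi_1 Y_U(v,\zeta)\psi(w_1)$ is governed by the $V$-module structure on $\psi(W_1)\cong W_1$ (Proposition \ref{pi-one-module}), so the "extra" cross term $\pi_2 Y_U(u,x+\zeta)\pi_1 Y_U(v,\zeta)\psi(w_1)$ reassembles into $\pi_2 Y_U(Y(u,x)v,\zeta)\psi(w_1)$ minus the remaining piece. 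Third, compute $\left(Y_\H^{s(R)}(v,-x)e^{xD_\H}F(u)\right)(\zeta)w_1$; using the $D$-derivative property of $\H_N(W_1,W_2)$ this equals $\iota_{\zeta x}E\left(e^{-xD_{W_2}}\pi_2 Y_U(u,x+\zeta)\psi(Y_{W_1}(v,-x)\cdots)\right)$-type expression, which upon comparison with the skew-symmetry vertex operator $Y^s$ and $e^{xD_U}$ on $U$ produces exactly the term $\pi_2 Y_U(u,x+\zeta)\pi_1 Y_U(v,\zeta)\psi(w_1)$ that was left over. Fourth, add the two contributions and invoke weak associativity on $U$, together with $\pi_1+\pi_2 = 1$ and $\pi_2 D_U = D_{W_2}\pi_2$, to recognize the sum as $\pi_2 Y_U(Y(u,x)v,\zeta)\psi(w_1) = F(Y(u,x)v)(\zeta)w_1$.

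I expect the main obstacle to be Step three: matching the right-action term $Y_\H^{s(R)}(v,-x)e^{xD_\H}F(u)$ to the cross term coming from the left side. The subtlety is that $F(u) = \pi_2 Y_U(u,\zeta)\psi$ contains a projection, whereas on $U$ itself the weak associativity and $e^{xD_U}$-skew-symmetry identities hold \emph{without} projections; one must carefully insert $\pi_1+\pi_2=1$ after $Y_U(u,x+\zeta)$ and argue that the $\pi_2$-branch contributes to the genuine bracket $Y(u,x)v$ term while the $\pi_1$-branch is precisely what the right action accounts for, and simultaneously track the $e^{xD}$ conjugations carefully using the $D$-derivative property in $\H_N(W_1,W_2)$ so that the formal variable manipulations (the $\iota_{\zeta x}$ expansions and the shift $x+\zeta$) are legitimate. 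A secondary technical point is justifying that all rearrangements take place inside genuinely convergent expressions, which is exactly what the composability condition built into membership in $\H_N(W_1,W_2)$ guarantees, so no extra convergence input is needed beyond Definition \ref{Ext_N-def}.
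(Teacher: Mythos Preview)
Your overall strategy---weak associativity on $U$, applying $\pi_2$, inserting $\pi_1+\pi_2=1$, and matching the two resulting pieces to $Y_\H^L$ and the right-action term---is exactly what the paper does, and the convergence worries you raise are indeed handled by Definition \ref{Ext_N-def}. Two small corrections: the derivation identity should read $e^{xD_\H}Y_\H^{s(R)}(v,-x)F(u)$ (the exponential on the outside, as in the paper's Example), and your Step 3 is more tangled than necessary---there is no need to invoke $e^{-xD_{W_2}}$ or skew-symmetry on $U$. The paper proceeds more directly: starting from $(x_0+x_2)^q\,\pi_2 Y_U(Y(u,x_0)v,x_2)\psi(w_1)$ and splitting the product side with $\pi_1+\pi_2$, the $\pi_1$-branch becomes $(x_0+x_2)^q\,\pi_2 Y_U(u,x_0+x_2)\psi(Y_{W_1}(v,x_2)w_1)$ via Proposition \ref{pi-one-module}; after dividing by $(x_0+x_2)^q$ with the $\iota_{x_2x_0}$ expansion and substituting $x_2=\zeta$, this term is recognized \emph{directly from the definition} of $Y_\H^{s(R)}$ (together with $D_\H=\partial/\partial\zeta$) as $\bigl(e^{x_0 D_\H}Y_\H^{s(R)}(v,-x_0)F(u)\bigr)(\zeta)w_1$, with no intermediate skew-symmetry step.
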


\begin{proof}
From the weak associativity on $U$, for every $u\in U, w_1\in W_1$, we have $q\in \N$, such that for every $v\in V$, 
$$(x_0+x_2)^q Y_U(Y(u, x_0)v,x_2)\psi(w_1) = (x_0+x_2)^q Y_U(u, x_0+x_2)Y_U(v,x_2)\psi(w_1)$$
Apply $\pi_2$ on both sides. 
\begin{align}
    (x_0+x_2)^q \pi_2 Y_U(Y(u, x_0)v,x_2)\psi(w_1) &= (x_0+x_2)^q \pi_2 Y_U(u, x_0+x_2)Y_U(v,x_2)\psi(w_1)\nonumber\\
    &=(x_0+x_2)^q \pi_2 Y_U(u, x_0+x_2) \pi_2  Y_U(v,x_2)\psi(w_1) \label{Ext-der-1}\\
    &\quad + (x_0+x_2)^q \pi_2 Y_U(u, x_0+x_2) (1-\pi_2) Y_U(v,x_2)\psi(w_1) \label{Ext-der-2}
\end{align}
We have seen in Proposition \ref{pi-one-module} that for $\pi_1 = 1-\pi_2$,  $(\psi(W_1), \pi_1 Y_U, \d_U|_{\psi(W_1)})$ is a $V$-module isomorphic to $W_1$. Thus (\ref{Ext-der-2}) can be simplified as 
$$(x_0+x_2)^q \pi_2 Y_U(u, x_0+x_2) \pi_1 Y_U(v,x_2)\psi(w_1) = (x_0+x_2)^q \pi_2 Y_U(u, x_0+x_2) \psi (Y_{W_1}(v,x_2)w_1). $$
Notice also that $\pi_2 Y_U(v, x) \pi_2 = Y_U(v, x) \pi_2$ since $W_2$ is a submodule, thus we can remove the first $\pi_2$ in (\ref{Ext-der-1}). Combining our knowledge for (\ref{Ext-der-1}) and (\ref{Ext-der-2}), we see that 
\begin{align}
    \begin{aligned}(x_0+x_2)^q \pi_2 Y_U(Y(u, x_0)v,x_2)\psi(w_1) &= (x_0+x_2)^q Y_U(u, x_0+x_2) \pi_2  Y_U(v,x_2)\psi(w_1)  \\
    &\quad + (x_0+x_2)^q \pi_2 Y_U(u, x_0+x_2) \psi (Y_{W_1}(v,x_2)w_1)
    \end{aligned}\label{Ext-der-3}
\end{align}
Let $f(x_0, x_2)$ to be the lower-truncated series in $W_2((x_0, x_2))$ representing both sides of (\ref{Ext-der-3}), then 
\begin{align*}
\pi_2 Y_U(Y(u,x_0)v, x_2)\psi(w_1) &= \iota_{x_2x_0} \frac{f(x_0, x_2)}{(x_2+x_0)^q} \\
&= \iota_{x_2x_0} E\left(Y_U(u, x_0+x_2) \pi_2  Y_U(v,x_2)\psi(w_1) \right)\\
&\quad +\iota_{x_2x_0} E\left(\pi_2Y_U(u, x_0+x_2) \psi (Y_{W_1}(v,x_2)w_1) \right)
\end{align*}
Now we substitute $x_2=\zeta$. With the knowledge of the definition of $Y_\H^L$ and $Y_\H^{s(R)}$, we immediately see that
\begin{align*}
\pi_2 Y_U(Y(u,x_0)v, \zeta)\psi(w_1) &= \left(Y_\H^L(u, x_0) \pi_2  Y_U(v,\zeta)\psi\right)(w_1) + \left(e^{x_0D_\H}Y_\H^{s(R)}(v, -x_0) \pi_2Y_U(u, \zeta) \psi \right)(w_1)
\end{align*}
Thus the map $v\mapsto \pi_2Y_U(v, \zeta)\psi$ is a derivation from $V$ to $\H_N(W_1, W_2)$. 
\end{proof}

\begin{prop}\label{equiv-ext-inner-prop}
    Let $U^{(1)}$, $U^{(2)}$ be two equivalent extensions with $T: U^{(1)}\to U^{(2)}$ be the $V$-module isomorphism fitting in the diagram (\ref{CommDiag}). Let $\psi^{(i)}: W_1 \to U^{(i)}$ be the section and $\pi_2^{(i)}: U^{(i)}\to W_2$ be projection as in Definition \ref{Ext_N-def} $(i = 1, 2)$. Then 
    $$\pi_2^{(1)}Y_U^{(1)}(v, \zeta)\psi^{(1)}-\pi_2^{(2)}Y_U^{(2)}(v, \zeta)\psi^{(2)}$$ 
    is an inner derivation corresponding to the element $\phi(\zeta)\in \H_N(W_1, W_2)$ given by 
    $$\phi(\zeta)w_1 = \pi_2^{(2)} T\psi^{(1)}w_1. $$
    Here $\phi(\zeta)$ is a constant $\overline{W_2}$-valued rational function. 
\end{prop}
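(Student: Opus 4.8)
The plan is to show that the difference of the two derivations is exactly the coboundary $\hat\delta^0$ applied to the vacuum-like vector $\phi$, where $\phi$ is the constant function $\zeta \mapsto \pi_2^{(2)} T \psi^{(1)} w_1$. First I would check that $\phi$ is well-defined as an element of $\H_N(W_1, W_2)$: since $T$ is a grading-preserving $V$-module homomorphism, $\pi_2^{(2)} T \psi^{(1)}$ is a grading-preserving linear map $W_1 \to W_2 \subset \overline{W_2}$, so as a constant (hence holomorphic, hence rational) function it trivially satisfies the $D$-derivative property ($\frac{d}{d\zeta}$ of a constant is zero, and one must match this against $D_{W_2}\phi(\zeta)w_1 - \phi(\zeta)D_{W_1}w_1$, which also vanishes because $T$ intertwines $D$) and the $\mathbf{d}$-conjugation property (with weight $0$). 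The composability and $N$-weight-degree conditions should follow because a constant $W_2$-valued function composed with finitely many vertex operators from $W_1$ and $W_2$ produces the same rational function one gets from the module structure on $U^{(2)}$, which converges; alternatively one invokes that $\H_N(W_1,W_2)$ contains all such ``constant'' maps coming from homomorphisms, as is implicit in the setup. I also need $D_\H \phi = 0$, i.e. $\phi$ is vacuum-like, which is immediate since $D_\H$ differentiates in $\zeta$ and $\phi$ is constant.

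Next I would compute the coboundary. By the definition of $\hat\delta^0$, for $v \in V$ we have $[(\hat\delta^0 \phi)(v)](\zeta) = E\bigl(Y_\H^L(v,\zeta)\phi - Y_\H^{s(R)}(v,\zeta)\phi\bigr)$; unwinding the definitions of $Y_\H^L$ and $Y_\H^{s(R)}$ from Proposition-Definition \ref{Def-H_N}, this sends $w_1$ to
\begin{align*}
E\bigl(Y_{W_2}(v, \zeta)\,\pi_2^{(2)} T\psi^{(1)} w_1\bigr) - E\bigl(\pi_2^{(2)} T\psi^{(1)}\, Y_{W_1}(v,\zeta) w_1\bigr),
\end{align*}
where I have used that $\phi$ is constant in its own variable so the ``$\iota_{\zeta x} E(\cdots)$'' with the shifted argument collapses appropriately when we evaluate the resulting element of $\H_N$ back at $\zeta$. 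So it remains to identify this with $\pi_2^{(1)}Y_U^{(1)}(v,\zeta)\psi^{(1)} w_1 - \pi_2^{(2)}Y_U^{(2)}(v,\zeta)\psi^{(2)} w_1$.

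The core computation is then the following identity, valid for all $v \in V$, $w_1 \in W_1$:
\begin{align*}
\pi_2^{(1)}Y_U^{(1)}(v,\zeta)\psi^{(1)} w_1 - \pi_2^{(2)}Y_U^{(2)}(v,\zeta)\psi^{(2)} w_1 = Y_{W_2}(v,\zeta)\pi_2^{(2)} T\psi^{(1)} w_1 - \pi_2^{(2)} T\psi^{(1)} Y_{W_1}(v,\zeta) w_1.
\end{align*}
To prove this I would use that $T$ is a $V$-module map: $T Y_U^{(1)}(v,\zeta) = Y_U^{(2)}(v,\zeta) T$, and that $T$ restricts to the identity on $W_2$ and descends to the identity on $W_1$ (from the commuting diagram (\ref{CommDiag})). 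Applying $T$ to $Y_U^{(1)}(v,\zeta)\psi^{(1)} w_1$ and splitting via $1 = \pi_1^{(2)} + \pi_2^{(2)}$, together with $p^{(2)} T = p^{(1)}$ so that $p^{(2)} T \psi^{(1)} = \mathrm{id}_{W_1}$, one shows $T\psi^{(1)} w_1 = \psi^{(2)} w_1 + \pi_2^{(2)} T \psi^{(1)} w_1$ (the ``correction term'' lives in $W_2$). Substituting this, using $T|_{W_2} = \mathrm{id}$ on the $W_2$-part so that $Y_U^{(2)}(v,\zeta)$ acts there as $Y_{W_2}(v,\zeta)$, and then applying $\pi_2^{(2)}$ and commuting it past $Y_U^{(2)}(v,\zeta)$ on the submodule $W_2$ (using $\pi_2^{(2)} Y_U^{(2)}(v,\zeta)\pi_2^{(2)} = Y_{W_2}(v,\zeta)\pi_2^{(2)}$ as in the proof of Proposition \ref{Ext-der-prop}) yields the claimed identity after rearrangement; the term $\pi_2^{(2)} T \psi^{(1)} Y_{W_1}(v,\zeta)w_1$ arises from $\pi_2^{(2)} Y_U^{(2)}(v,\zeta) \psi^{(2)}$ having the same ``$W_1$-transport'' as $\pi_2^{(1)} Y_U^{(1)}(v,\zeta)\psi^{(1)}$ composed through $T$. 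The main obstacle will be bookkeeping the projections and the precise way $T$ mixes the $\psi^{(1)}(W_1)$ and $W_2$ components — in particular keeping careful track of which vertex operator ($Y_U^{(1)}$, $Y_U^{(2)}$, $Y_{W_1}$, or $Y_{W_2}$) acts on which piece — but no analytic subtlety enters since everything here reduces to algebraic identities among formal series that we already know converge. Finally, matching the two displayed expressions shows $\pi_2^{(1)}Y_U^{(1)}(v,\zeta)\psi^{(1)} - \pi_2^{(2)}Y_U^{(2)}(v,\zeta)\psi^{(2)} = \hat\delta^0(\phi)$, i.e. it is the inner derivation associated to $\phi$, as claimed.
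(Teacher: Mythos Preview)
Your proposal is correct and follows essentially the same route as the paper: both arguments hinge on the decomposition $T\psi^{(1)}w_1 = \psi^{(2)}w_1 + \pi_2^{(2)}T\psi^{(1)}w_1$ (obtained from $p^{(2)}T\psi^{(1)} = \mathrm{id}_{W_1}$) together with the intertwining relation $TY_U^{(1)}(v,x) = Y_U^{(2)}(v,x)T$, and then identify the resulting expression $Y_{W_2}(v,\zeta)\phi_{-1} - \phi_{-1}Y_{W_1}(v,\zeta)$ with the coboundary of the constant vacuum-like element $\phi$. Your displayed identity is in fact the correct one (the paper's equation (\ref{equiv-ext-inner-6}) carries a sign typo on the right-hand side, though its subsequent identification with $Y_\H^L - Y_\H^{s(R)}$ is consistent with your version).
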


\begin{proof}
For $i = 1, 2$, let $p^{(i)}: U^{(i)}\to W_1$ be the map as in (\ref{CommDiag}); write $U^{(i)} = W_2 \amalg \psi^{(i)}(W_1)$ where $\psi^{(i)}$ is the section of $p^{(i)}$ as in Definition \ref{Ext_N-def}; let $\pi^{(i)}_2$ be the projection as in Definition \ref{Ext_N-def}, $\pi^{(i)}_1 = 1- \pi_2^{(i)}$. Since (\ref{CommDiag}) is commutative, we have 
$$T(w_2, 0) = (w_2, 0), \qquad  p^{(1)}(w_2, \psi^{(1)}(w_1)) = p^{(2)} T(w_2,\psi^{(1)}(w_1)),$$
which tells us that 
$$Tw_2 = w_2,$$ 
and 
$$w_1 = p^{(2)} T \psi^{(1)} w_1. $$
If we write $T\psi^{(1)}w_1 = \pi_2^{(2)}T\psi^{(1)}w_1 + \pi_1^{(2)}T\psi^{(1)}w_1$, then since $p^{(2)}(W_2) = 0$, we have 
$$w_1 = p^{(2)}\pi_1^{(2)}T\psi^{(1)}w_1. $$
From Proposition \ref{pi-one-module}, we see that $\psi^{(2)}$ is the inverse of $p^{(2)}$. Thus we have
$$\psi^{(2)} w_1 = \pi_1^{(2)}T \psi^{(1)} w_1. $$
In summary, we have 
\begin{equation}\label{equiv-ext-inner-0}
    T(w_2, \psi^{(1)}w_1) = (w_2 + \pi_2^{(2)}T\psi^{(1)}w_1, \psi^{(2)} w_1).
\end{equation}
Since $T$ is an isomorphism, we know that 
\begin{equation}\label{equiv-ext-inner-1}
    T Y_U^{(1)}(v, x) = Y_U^{(2)}(v, x) T.
\end{equation}
Apply the left-hand-side of (\ref{equiv-ext-inner-1}) to $(w_2, \psi^{(1)}w_1)$, we have
\begin{align}
     &\quad T Y_U^{(1)}(v, x) (w_2, \psi^{(1)}w_1)\nonumber\\
     &= T(Y_U^{(1)}(v, x)w_2 + \pi_2^{(1)} Y_U^{(1)}(v, x) \psi^{(1)}w_1, \pi_1^{(1)} Y_U^{(1)}(v, x) \psi^{(1)}w_1)\nonumber \\
    &= T(Y_U^{(1)}(v, x)w_2 + \pi_2^{(1)} Y_U^{(1)}(v, x) \psi^{(1)}w_1, \psi^{(1)} (Y_{W_1}(v, x) w_1)\label{equiv-ext-inner-2}\\
    &= (Y_U^{(1)}(v, x)w_2 + \pi_2^{(1)} Y_U^{(1)}(v, x) \psi^{(1)}w_1 + \pi_2^{(2)}T\psi^{(1)}Y_{W_1}(v,x)w_1, \psi^{(2)} Y_{W_1}(v, x) w_1)\label{equiv-ext-inner-3}
\end{align}
Here (\ref{equiv-ext-inner-2}) follows from $\psi^{(1)}$ being a homomorphism (cf. Proposition \ref{pi-one-module}); (\ref{equiv-ext-inner-3}) follows from (\ref{equiv-ext-inner-0}). Apply the right-hand-side of (\ref{equiv-ext-inner-1}) to $(w_2, \psi^{(1)}w_1)$, we have
\begin{align}
    &\quad Y_U^{(2)}(v, x) T(w_2,\psi^{(1)}w_1) \nonumber\\
    &= Y_U^{(2)}(v, x) (w_2 + \pi_2^{(2)} T\psi^{(1)}w_1,\psi^{(2)}w_1)\label{equiv-ext-inner-4}\\
    &= (Y_U^{(2)}(v, x)w_2 + Y_U^{(2)}(v, x)\pi_2^{(2)} T\psi^{(1)}w_1 + \pi_2^{(2)}Y_U^{(2)}(v, x)\psi^{(2)}w_1, \pi_1^{(2)}Y_U^{(2)}(v, x)\psi^{(2)}w_1)\nonumber\\
    &= (Y_U^{(2)}(v, x)w_2 + Y_U^{(2)}(v, x)\pi_2^{(2)} T\psi^{(1)}w_1 + \pi_2^{(2)}Y_U^{(2)}(v, x)\psi^{(2)}w_1, \psi^{(2)}Y_{W_1}(v, x)w_1)\label{equiv-ext-inner-5}
\end{align}
Here (\ref{equiv-ext-inner-4}) follows from (\ref{equiv-ext-inner-0}); (\ref{equiv-ext-inner-5}) follows from $\psi^{(2)}$ being a homomorphism (cf. Proposition \ref{pi-one-module}). Notice also that 
$$Y_U^{(1)}(v,x)w_2 = Y_U^{(2)}(v, x) w_2 = Y_{W_2}(v, x)w_2,$$
thus the equality of (\ref{equiv-ext-inner-3}) and (\ref{equiv-ext-inner-5}), after substituting $x=\zeta$, implies  that
\begin{equation}\label{equiv-ext-inner-6}
    \pi_2^{(1)}Y_U^{(1)}(v, \zeta) \psi^{(1)}w_1 -  \pi_2^{(2)}Y_U^{(2)}(v, \zeta)\psi^{(2)}w_1=  
    -\pi_2^{(2)}T\psi^{(1)} Y_{W_1}(v, \zeta)w_1 + Y_{W_2}(v, \zeta)\pi_2^{(2)} T\psi^{(1)}w_1.
\end{equation}
If we now set $\phi(\zeta) = \pi_2^{(2)}T\psi^{(1)}$ as an element in $\H_N(W_1, W_2)$ that is constant in $\zeta$, then 
$$(Y_\H^L(v, x)\phi)(\zeta)w_1 = \iota_{\zeta x}E(Y_{W_2}(v, x+\zeta) \phi(\zeta))w_1 = Y_{W_2}(v, \zeta+x)\pi_2^{(2)}T\psi^{(1)}w_1$$
$$(Y_\H^{s(R)}(v, x)\phi)(\zeta)w_1 = \iota_{\zeta x}E(\phi(\zeta)Y_{W_1}(v,x+\zeta)w_1) = \pi_2^{(2)}T\psi^{(1)}Y_{W_1}(v, \zeta+x)w_1$$
Clearly, $D_\H\phi(\zeta) = 0$. This means that $\phi$ is a vacuum-like element in $\H_N(W_1, W_2)$. Thus we see that right-hand-side of (\ref{equiv-ext-inner-6}) is precisely 
$$\lim_{x\to 0} \left((Y_\H^L(v, x)\phi)(\zeta) - (Y_\H^{s(R)}(v, x)\phi)(\zeta) \right)$$
This shows that the left-hand-side of (\ref{equiv-ext-inner-6}) is an inner derivation associated to $\phi(\zeta) = \pi_2^{(2)}T\psi^{(1)}$. \end{proof}

\begin{rema}\label{rmk-indep-section}
In particular, Proposition \ref{equiv-ext-inner-prop} shows that the cohomology class obtained Proposition \ref{Ext-der-prop} is independent of the choice of the section $\psi^{(1)}$. 
\end{rema}

\begin{cor}\label{def-cal-F}
The map $\mathcal{F}: \Ext_N^1(W_1, W_2) \to \widehat{H}^1(V, \H_N(W_1, W_2))$ sending $U$ to the derivation $v\mapsto \pi_2 Y_U(v, \zeta) \psi$ is well-defined. Here $U, \pi_2$ and $\psi$ are as in Definition \ref{Ext_N-def}. 
\end{cor}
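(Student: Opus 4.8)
The proof is a direct assembly of Propositions \ref{Ext-der-prop} and \ref{equiv-ext-inner-prop}, so I expect no genuine obstacle; the only work is to organize the bookkeeping of the choices that enter the construction of $\mathcal{F}$.

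First I would fix a representative $U$ of a class in $\Ext_N^1(W_1, W_2)$ together with a section $\psi$ and the associated projection $\pi_2$ as in Definition \ref{Ext_N-def}; such a pair exists by the very definition of $\Ext_N^1$, and by hypothesis $\pi_2 Y_U(v,\zeta)\psi \in \H_N(W_1, W_2)$ for every $v\in V$. By Proposition \ref{Ext-der-prop}, the map $F_U\colon v\mapsto \pi_2 Y_U(v,\zeta)\psi$ is a derivation from $V$ to $\H_N(W_1, W_2)$; it is grading-preserving because $\psi$ is a module homomorphism (Proposition \ref{pi-one-module}, hence commutes with $\d$), $\pi_2$ is the weight-preserving projection onto the submodule $W_2$, and $Y_U$ has the $\d$-commutator behaviour of a module vertex operator. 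Therefore, by the description of the first cohomology following the definition of $\hat H^1_\infty$, the map $v\mapsto e^{zD_\H}F_U(v)$ is a $1$-cocycle and determines a class $[F_U]\in\widehat{H}^1(V,\H_N(W_1,W_2))$. Thus the assignment $U\mapsto [F_U]$ takes values in the claimed target.

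Second I would verify independence of all choices, which Proposition \ref{equiv-ext-inner-prop} handles at once: if $U^{(1)},U^{(2)}$ lie in the same equivalence class, with an isomorphism $T$ fitting into diagram (\ref{CommDiag}), and with chosen sections $\psi^{(1)},\psi^{(2)}$ and projections $\pi_2^{(1)},\pi_2^{(2)}$, then $F_{U^{(1)}}-F_{U^{(2)}}$ is the inner derivation associated to the constant, vacuum-like element $\phi\in\H_N(W_1,W_2)$ given by $\phi(\zeta)w_1=\pi_2^{(2)}T\psi^{(1)}w_1$, so $[F_{U^{(1)}}]=[F_{U^{(2)}}]$. Specializing to $U^{(1)}=U^{(2)}$ with $T=\mathrm{id}$ but two different admissible sections gives the section-independence recorded in Remark \ref{rmk-indep-section}, and the general case gives independence of the representative within an equivalence class. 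Combining the two steps shows $\mathcal{F}$ is well-defined.
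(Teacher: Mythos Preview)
Your proposal is correct and matches the paper's intended argument: the corollary is stated without proof precisely because it is the straightforward assembly of Proposition \ref{Ext-der-prop} (the map is a derivation) and Proposition \ref{equiv-ext-inner-prop} with Remark \ref{rmk-indep-section} (the cohomology class is independent of the section and of the representative of the equivalence class). Your added remark on grading-preservation is a small clarification the paper leaves implicit.
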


\subsection{1-cocycles in the bimodule $\H_N(W_1, W_2)$}

To understand the converse map, we need some convergence results regarding 1-cocycles. We will start from the following lemmas concerning complex series. 

\begin{lemma}\label{IterSeries}
\begin{enumerate}
\item \label{IterSeries-L}Let $n$ be a positive integer. Let $f$ be a rational function in $z_1, ..., z_n$. Let $T$ be a connected multicircular domain on which the lowest power of $z_n$ in the Laurent series expansion of $f(z_1, ..., z_n)$ is the same as the negative of the order of pole $z_n=0$. Let $S$ be a nonempty open subset of $T$ and $S'$ be the image of $S$ via the projection $(z_1, ..., z_n)\mapsto (z_1, ..., z_{n-1})$. Assume that for each fixed $k_n \in \Z$, the series
$$\sum_{k_1, k_2, ..., k_{n-1}\in \Z}a_{k_1k_2...k_{n-1}k_n} z_1^{k_1}z_2^{k_2}\cdots z_{n-1}^{k_{n-1}}$$
converges absolutely for every $(z_1, z_2, ..., z_{n-1})\in S'$, and 
\begin{equation}\label{IterSeries-1}
\sum_{k_n\in \Z}\left(\sum_{k_1, k_2, ..., k_{n-1}\in \Z}a_{k_1k_2...k_{n-1}k_n} z_1^{k_1}z_2^{k_2}\cdots z_{n-1}^{k_{n-1}}\right)z_n^{k_n},
\end{equation}
viewed as a series whose terms are $\left(\sum\limits_{k_1, k_2, ..., k_{n-1}\in \Z}a_{k_1k_2...k_{n-1}k_n} z_1^{k_1}z_2^{k_2}\cdots z_{n-1}^{k_{n-1}}\right)z_n^{k_n}$, is lower-truncated in $z_n$ and converges to $f(z_1, ..., z_n)$ for every $(z_1, z_2, ..., z_{n-1}, z_n)\in S$.  Then the corresponding Laurent series
\begin{equation}\label{IterSeries-2}
\sum_{k_1, k_2, ...,  k_{n-1}, k_n\in \Z}a_{k_1k_2...k_n} z_1^{k_1}z_2^{k_2}\cdots z_{n-1}^{k_{n-1}} z_n^{k_n},
\end{equation}
converges absolutely to $f(z_1, ..., z_n)$ for every $(z_1, ..., z_n)\in T$ 
\item \label{IterSeries-U}Let $n$ be a positive integer. Let $f$ be a rational function in $z_1, ..., z_n$. Let $T$ be a connected multicircular domain on which the highest power of $z_n$ in the Laurent series expansion of $f(z_1, ..., z_n)$ is the same as the negative of the order of pole $z_n=\infty$. Let $S$ be a nonempty open subset of $T$ and $S'$ be the image of $S$ via the projection $(z_1, ..., z_n)\mapsto (z_1, ..., z_{n-1})$. Assume that for each fixed $k_n \in \Z$, the series
$$\sum_{k_1, k_2, ..., k_{n-1}\in \Z}a_{k_1k_2...k_{n-1}k_n} z_1^{k_1}z_2^{k_2}\cdots z_{n-1}^{k_{n-1}}$$
converges absolutely for every $(z_1, z_2, ..., z_{n-1})\in S'$, and 
\begin{equation*}
\sum_{k_n\in \Z}\left(\sum_{k_1, k_2, ..., k_{n-1}\in \Z}a_{k_1k_2...k_{n-1}k_n} z_1^{k_1}z_2^{k_2}\cdots z_{n-1}^{k_{n-1}}\right)z_n^{k_n},
\end{equation*}
viewed as a series whose terms are $\left(\sum\limits_{k_1, k_2, ..., k_{n-1}\in \Z}a_{k_1k_2...k_{n-1}k_n} z_1^{k_1}z_2^{k_2}\cdots z_{n-1}^{k_{n-1}}\right)z_n^{k_n}$, is upper-truncated in $z_n$ and converges to $f(z_1, ..., z_n)$ for every $(z_1, z_2, ..., z_{n-1}, z_n)\in S$.  Then the corresponding Laurent series
\begin{equation*}
\sum_{k_1, k_2, ...,  k_{n-1}, k_n\in \Z}a_{k_1k_2...k_n} z_1^{k_1}z_2^{k_2}\cdots z_{n-1}^{k_{n-1}} z_n^{k_n},
\end{equation*}
converges absolutely to $f(z_1, ..., z_n)$ for every $(z_1, ..., z_n)\in T$ 
\end{enumerate}
\end{lemma}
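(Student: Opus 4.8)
The plan is to prove part (\ref{IterSeries-L}) and deduce (\ref{IterSeries-U}) from it at the end; so fix the hypotheses of (\ref{IterSeries-L}). Since $f$ is rational and holomorphic on the connected multicircular domain $T$, it admits a unique Laurent expansion $\sum_{k\in\Z^n}b_{k_1\cdots k_n}z_1^{k_1}\cdots z_n^{k_n}$ converging absolutely and locally uniformly on all of $T$; this is standard (iterate the one-variable Laurent expansion, or use the theory of Laurent series on Reinhardt domains). The content of the lemma is that iterated convergence of the $a$-series does not by itself yield absolute convergence of the full double series; I would sidestep this by showing instead that $a_{k_1\cdots k_n}=b_{k_1\cdots k_n}$ for all multi-indices, so that the series in (\ref{IterSeries-2}) \emph{is} $\sum_k b_k z^k$ and the conclusion follows from the standard fact.

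To identify the coefficients, I would group both the given $a$-series and the $b$-series by powers of $z_n$. The hypothesis on $T$ — that the lowest power of $z_n$ in the $T$-expansion of $f$ equals minus the order of the pole $z_n=0$ — forces the $b$-series grouped this way to be lower-truncated in $k_n$, while the $a$-series is lower-truncated in $k_n$ by assumption. Fix a polydisc $P'\times P_n\subseteq S$ avoiding all coordinate hyperplanes, so that $P'\subseteq S'$. Then for each fixed $z'\in P'$, both $\sum_{k'}a_{k'k_n}z'^{k'}$ (by hypothesis) and $\sum_{k'}b_{k'k_n}z'^{k'}$ (by rearranging an absolutely convergent series) converge, and the resulting one-variable Laurent series in $z_n$ are lower-truncated and both sum to $f(z',\cdot)$ on $P_n$; by uniqueness of one-variable Laurent coefficients, $\sum_{k'}a_{k'k_n}z'^{k'}=\sum_{k'}b_{k'k_n}z'^{k'}$ for every $z'\in P'$ and every $k_n$.

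The main obstacle is the last step: for each fixed $k_n$, passing from agreement on the nonempty open set $P'$ to equality of all coefficients of two absolutely convergent $(n-1)$-variable Laurent series, since $P'$ need not contain a full torus over which to integrate. Here I would use that the interior of the domain of absolute convergence of any Laurent series is, in the coordinates $(\log|z_1|,\dots,\log|z_{n-1}|)$, a convex — hence connected — open set on which the sum is holomorphic. The difference series $\sum_{k'}(a_{k'k_n}-b_{k'k_n})z'^{k'}$ thus has holomorphic sum on the connected interior of its domain of absolute convergence, a region containing $P'$; vanishing on the open set $P'$, it vanishes identically there by the identity theorem, and a Cauchy integral over any torus in that region gives $a_{k'k_n}=b_{k'k_n}$. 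This proves (\ref{IterSeries-L}). Finally, (\ref{IterSeries-U}) follows by applying (\ref{IterSeries-L}) to $f(z_1,\dots,z_{n-1},1/z_n)$: the substitution $z_n\mapsto 1/z_n$ interchanges the poles $z_n=0$ and $z_n=\infty$ with equal order, turns a series upper-truncated in $z_n$ into one lower-truncated in the new variable, and preserves both the multicircular structure of $T$ and absolute convergence on $(\C^\times)^n$.
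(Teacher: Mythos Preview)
Your argument is correct. The paper does not actually prove this lemma in the text; its entire proof reads ``An exposition can be found in \cite{Q-Mod}, Lemma 4.5 and Lemma 4.7,'' so there is no in-paper argument to compare against in detail.

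That said, your strategy --- identify the $a$-coefficients with the coefficients of the canonical Laurent expansion of $f$ on $T$, thereby reducing the conclusion to the standard absolute convergence of that expansion --- is clean and self-contained. The two nontrivial points you isolate are exactly the right ones: first, that lower-truncation in $z_n$ on both sides lets you invoke one-variable uniqueness fiberwise over $P'$; second, that equality of two absolutely convergent $(n-1)$-variable Laurent series on a small open set forces equality of coefficients. Your handling of the second point via log-convexity of the domain of absolute convergence (hence connectedness, hence the identity theorem applies, hence one may integrate over a torus) is the standard and correct maneuver. The deduction of (\ref{IterSeries-U}) from (\ref{IterSeries-L}) by $z_n\mapsto 1/z_n$ is immediate.
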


\begin{proof}
An exposition can be found in \cite{Q-Mod}, Lemma 4.5 and Lemma 4.7. 
\end{proof}

\begin{prop}\label{der-weak-assoc-conv}
Let $\Phi: V\to \reallywidetilde{\H_N(W_1, W_2)}_{z}$ be a 1-cocycle. For $v\in V$, let $\phi(v, \zeta)=(\Phi(v;0))(\zeta)\in \H_N(W_1, W_2)$. Then for every $v_1\in V, w_1\in W_1$, the series
\begin{align*}
    (Y_\H^L(v_1, z_1)\Phi(v_2;z_2))(\zeta)w_1 &&  & |\zeta|>|z_1|>|z_2|\\
    (Y_\H^{s(R)}(v_2, z_2)\Phi(v_1;z_1))(\zeta)w_1 && \text{     converge absolutely when}\hspace{1cm} & |\zeta|>|z_2|>|z_1|\\
    \Phi(Y(v_1, z_1-z_2)v_2; z_2)(\zeta)w_1 && & |\zeta|>|z_1-z_2|+|z_2|, |z_1-z_2|>0
\end{align*}
respectively to the $\overline{W_2}$-valued rational functions
    $$E(Y_{W_2}(v_1, \zeta+z_1)\phi(v_2, \zeta+z_2)w_1)$$
    $$E(\phi(v_2, \zeta+z_2)Y_{W_1}(v_1, \zeta+z_1)w_1)$$
    $$E(Y_{W_2}(v_1, \zeta+z_1)\phi(v_2, \zeta+z_2)w_1)+E(\phi(v_2, \zeta+z_2)Y_{W_1}(v_1, \zeta+z_1)w_1)$$
\end{prop}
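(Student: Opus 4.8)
The plan is to unwind the definition of the 1-cocycle condition and use the explicit formulas for $Y_\H^L$ and $Y_\H^{s(R)}$ from Proposition-Definition \ref{Def-H_N}, together with the iterated-series convergence lemma (Lemma \ref{IterSeries}) to handle the passage between different domains of absolute convergence.

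\begin{proof}[Proof sketch]
First I would record what the cocycle condition $\hat\delta^1_\infty \Phi = 0$ says concretely. Since $\Phi\in \hat{C}^1_\infty(V, \H_N(W_1,W_2))$, it is composable with one vertex operator, so all three series in the statement converge in their respective disjoint regions to $\overline{\H_N(W_1,W_2)}$-valued rational functions in $z_1, z_2$; the cocycle identity is exactly the statement that, as elements of $\widetilde{(\H_N(W_1,W_2))}_{z_1, z_2}$,
\begin{equation*}
E_\H^{(1,0)}\circ_2 \Phi \, (v_1\otimes v_2; z_1, z_2) - \Phi\circ_1 E_V^{(2)}\, (v_1\otimes v_2; z_1, z_2) + E_\H^{(0,1)}\circ_2 \Phi \, (v_1\otimes v_2; z_1, z_2) = 0,
\end{equation*}
i.e. the third function is the sum of the first two (after analytic continuation). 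So the content of the proposition is really: (i) identify the three rational functions, as functions valued in $\widehat{(W_2)}_\zeta$, by evaluating them at $w_1\in W_1$ and pairing with $w_2'\in W_2'$; and (ii) verify the claimed regions of absolute convergence.

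For step (i), I would start from the ``middle'' term. By the $\d$-conjugation and $D$-derivative properties of $\Phi$ one has $\Phi(v;z) = e^{zD_\H}\Phi(v;0)$, hence $\Phi(v;z)$ corresponds to the element $\phi(v,\zeta)$ shifted: $(\Phi(v;z))(\zeta)w_1 = \phi(v,\zeta+z)w_1$ as a $\overline{W_2}$-valued rational function in $\zeta, z$ (using $D_\H$-action is $\partial_\zeta$). Then I plug the defining formulas
\begin{equation*}
(Y_\H^L(v_1,z_1)\phi)(\zeta)w_1 = \iota_{\zeta z_1} E(Y_{W_2}(v_1, z_1+\zeta)\phi(\zeta)w_1), \quad (Y_\H^{s(R)}(v_1,z_1)\phi)(\zeta)w_1 = \iota_{\zeta z_1}E(\phi(\zeta)Y_{W_1}(v_1,z_1+\zeta)w_1)
\end{equation*}
with $\phi$ replaced by the shifted $\phi(v_2,\zeta+z_2)$. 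This gives, formally, the three $E(\cdots)$ expressions in the statement; the composability-with-one-vertex-operator hypothesis guarantees these $E$'s make sense, i.e. the underlying series converge to rational functions with the prescribed poles and their product with the appropriate monomial $\zeta^m z_1^{r_1}\cdots$ is a polynomial. The key point for the ``middle'' series being the sum of the other two is then the cocycle identity above, read off termwise.

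For step (ii), the regions: each of $Y_\H^L(v_1,z_1)\phi(v_2,\zeta+z_2)$ and $Y_\H^{s(R)}(v_2,z_2)\phi(v_1,\zeta+z_1)$ is obtained by composing a vertex operator $Y_{W_2}$ or $Y_{W_1}$ (in a variable $\zeta+z_i$) with $\phi$, which itself satisfies the composability condition of Proposition-Definition \ref{Def-H_N}(3); the region $|z_1|>|z_2|$ inside $|\zeta|>|z_1|$ (resp. $|z_2|>|z_1|$) is exactly what is needed for the nested expansions $\iota_{\zeta z_1}$, and the expansion of $\phi(v_2,\zeta+z_2)$ in powers of $z_2$, to both converge and for the product to land in the common region of convergence of the series defining $\phi$'s composability. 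For the third series, $\Phi(Y(v_1,z_1-z_2)v_2;z_2)$, I expand $Y(v_1,z_1-z_2)v_2 = \sum_k (v_1)_k v_2 \, (z_1-z_2)^{-k-1}$ and apply $\phi(\,\cdot\,,\zeta+z_2)$; the region $|\zeta|>|z_1-z_2|+|z_2|$ is precisely what makes $|\zeta+z_2|>|z_1-z_2|$ hold (so that pole of $\phi$ at $\zeta+z_2 = z_1-z_2$... more precisely: so that the relevant expansion of $\phi$ applied to $(v_1)_k v_2$ and summed over $k$ converges, which requires $\zeta+z_2$ outside the disk $|\cdot| = |z_1-z_2|$ governing the pole $z\to z_1-z_2$ coming from weak associativity inside $\phi$'s composability series with $k=1$, $l=0$ or similar).

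The main obstacle I expect is the bookkeeping in step (ii): matching the abstract "region where the composability series converges" in Definition \ref{Composable}/Proposition-Definition \ref{Def-H_N}(3) with the concrete inequalities $|\zeta|>|z_1|>|z_2|$ etc., and in particular justifying that after the substitution $\zeta\mapsto \zeta+z_i$ one still lands inside the domain of absolute convergence. This is where Lemma \ref{IterSeries} does the real work: one first fixes the "outer" variable (say expand in $z_2$ with coefficients series in the remaining variables, using part \eqref{IterSeries-L} or \eqref{IterSeries-U} according to whether the relevant pole is at $0$ or $\infty$), checks convergence of the inner sum on a subdomain, and then upgrades to absolute convergence of the full multiple Laurent series on the stated multicircular domain. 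The identification of the limit with the rational function $E(\cdots)$ then follows because two convergent expansions of the same rational function on overlapping domains must agree. Everything else — the $D$-derivative and $\d$-conjugation properties needed to write $\Phi(v;z) = e^{zD_\H}\Phi(v;0)$, and the termwise reading of the cocycle identity — is routine given the earlier results.
\end{proof}
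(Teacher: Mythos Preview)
Your overall strategy matches the paper's: expand $\Phi(v;z)=e^{zD_\H}\Phi(v;0)$ as a Taylor series in $z$, feed in the explicit formulas for $Y_\H^L$ and $Y_\H^{s(R)}$, and use Lemma~\ref{IterSeries} to move between multicircular domains. For the first two series this is exactly what the paper does (twice for each term, with carefully chosen regions $S,S',T$), and your sketch is adequate there.

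The gap is in your treatment of the middle series $\Phi(Y(v_1,z_1-z_2)v_2;z_2)(\zeta)w_1$. You assert at the outset that composability of $\Phi$ with one vertex operator gives absolute convergence of all three series as $\overline{\H_N(W_1,W_2)}$-valued rational functions, and then treat evaluation at $(\zeta,w_1)$ as a harmless afterthought. But an element of $\overline{\H_N(W_1,W_2)}$ is an infinite formal sum over weight components, and evaluating each component at $(\zeta,w_1)$ produces an infinite sum in $\overline{W_2}$ whose convergence is \emph{not} automatic. The paper is explicit about this: after writing the middle term as the double series
\[
\sum_{n\ge 0}\Bigl(\sum_{m\in\Z}\phi^{(n)}((v_1)_m v_2,\zeta)w_1\,(z_1-z_2)^{-m-1}\Bigr)\frac{z_2^n}{n!},
\]
it remarks that if one only assumes composability of $\Phi$, this series need not converge to a rational function at all. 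Your attempted direct argument (``$|\zeta+z_2|>|z_1-z_2|$ so that the pole of $\phi$\dots'') does not work, because there is no a priori uniform control on the poles of $\phi^{(n)}((v_1)_m v_2,\zeta)$ as $m$ ranges over~$\Z$.

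The paper's logic runs in the opposite order to yours: first prove convergence of the left and right terms to the stated rational functions (via Lemma~\ref{IterSeries} and the composability condition on $\phi\in\H_N(W_1,W_2)$); then invoke the cocycle identity, which forces the middle term to equal their sum \emph{as a $\overline{W_2}$-valued rational function}; this identifies the pole structure (exponents $p_1,p_2,p_{12}$ depending only on pairs $(v_i,w_1)$, $(v_1,v_2)$); and only then does one read off the region $|\zeta|>|z_1-z_2|+|z_2|$ by writing the middle series as a polynomial times the $\iota_{\zeta,z_2}$ and $\iota_{\zeta,z_2+(z_1-z_2)}$ expansions of $(\zeta+z_2)^{-p_2}$ and $(\zeta+z_1)^{-p_1}$. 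So the cocycle condition is not merely an identification step after convergence is known---it is the input that makes convergence of the middle series provable.
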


\begin{proof}
We start with the following observation: Since $\Phi$ satisfies the $D_\H$-derivative properties, we have
$$\Phi(v;z)= e^{zD_\H}\Phi(v;0)$$
for every $v\in V$ and every complex number $z$. Acting both sides on $w_1\in W$ with the parameter $\zeta$, we see that
$$(\Phi(v;z))(\zeta)w_1 = e^{zD_\H}(\Phi(v;0))(\zeta)w_1 = e^{zD_\H}\phi(v,\zeta)w_1 = \sum_{n=0}^\infty\left(\phi^{(n)}(v, \zeta)w_1\right)\frac{z^n}{n!}, $$
where $\phi^{(n)}(v, \zeta)$ is the $n$-th derivative of $\phi(v, \zeta)$. Now we fix $v_1, v_2\in V, w_1\in W_1$ and analyze each term in the cocycle condition. 

\begin{enumerate}
    \item For the left action term in the cocycle condition, we have
\begin{align}
    (Y_\H^L(v_1, z_1)\Phi(v_2;z_2))(\zeta)w_1 &= \sum_{n=0}^\infty\left((Y_\H^L(v_1, z_1)\phi^{(n)})(v_2, \zeta)w_1 \right)\frac{z_2^n}{n!} \nonumber\\
    &= \sum_{n=0}^\infty\left(\iota_{\zeta z_1}E(Y_{W_2}(v_1, \zeta+z_1)\phi^{(n)}(v_2, \zeta)w_1) \right)\frac{z_2^n}{n!}.\label{Cocycle-1-1}
\end{align}
We now show that (\ref{Cocycle-1-1}) converges to the same $\overline{W_2}$-valued rational function as 
\begin{align}
Y_{W_2}(v_1, \zeta+z_1)\phi(v_2, \zeta+z_2)w_1.\label{Cocycle-1-1-R}
\end{align}
From the definition of $\H_N(W_1, W_2)$, we know that for every $w_2'\in W_2'$, 
\begin{align}
    \langle w_2', Y_{W_2}(v_1, \zeta+z_1)\phi(v_2, \zeta+z_2)w_1\rangle \label{Cocycle-1-2}
\end{align}
converges absolutely in the region $|\zeta+z_1|>|\zeta+z_2|>0$ to a rational function $f(z_1, z_2, \zeta)$ with the only possible poles at $z_1-z_2=0, z_1+\zeta=0, z_2+\zeta=0$. From the formal Taylor theorem,  (\ref{Cocycle-1-2}) equals to 
\begin{align}
    \sum_{n=0}^\infty \langle w_2', Y_{W_2}(v_1, \zeta+z_1)\phi^{(n)}(v_2, \zeta)w_1\rangle \frac{z_2^n}{n!}\label{Cocycle-1-3}
\end{align}
in the region $|\zeta|>|z_2|, |\zeta+z_1|>|\zeta+z_2|>0$. In particular, (\ref{Cocycle-1-3}) converges absolutely to $f(z_1, z_2, \zeta)$ when $|\zeta|>|z_2|, |\zeta+z_1|>|\zeta|+|z_2|$. 

Note that for every fixed power $n$ of $z_2$, the coefficient 
\begin{align}
    \langle w_2', Y_{W_2}(v_1, \zeta+z_1)\phi^{(n)}(v_2, \zeta)w_1\rangle \label{Cocycle-1-4}
\end{align}
of the series of (\ref{Cocycle-1-3}) converges absolutely to the rational function 
$$\langle w_2', E(Y_{W_2}(v_1, \zeta+z_1)\phi^{(n)}(v_2, \zeta)w_1)\rangle$$
with the only possible pole at $\zeta=0, z_1=0, z_1+\zeta = 0$. If we expand the negative powers of $\zeta+z_1$ in (\ref{Cocycle-1-4}) as power series in $z_1$, we see that (\ref{Cocycle-1-4}) equals to
\begin{align}
    \iota_{\zeta z_1}\langle w_2', E(Y_{W_2}(v_1, \zeta+z_1)\phi^{(n)}(v_2, \zeta)w_1)\rangle. \nonumber
\end{align}
when $|\zeta+z_1|>|\zeta|>|z_1|$. Thus we know (\ref{Cocycle-1-3}) equals to the series 
\begin{align}\sum_{n=0}^\infty \left(\iota_{\zeta z_1}\langle w_2', E(Y_{W_2}(v_1, \zeta+z_1)\phi^{(n)}(v_2, \zeta)w_1)\rangle\right)\frac{z_2^n}{n!}\label{Cocycle-1-5}
\end{align}
when $|\zeta|>|z_2|, |\zeta+z_1|>|\zeta|+|z_2|, |\zeta|>|z_1|$. 
Regard (\ref{Cocycle-1-5}) as a series in $z_1, z_2, \zeta$ that is globally truncated in $z_2$ and $z_1$, take 
\begin{align*}
    S &= \{(z_1, \zeta, z_2): |\zeta|>|z_2|, |\zeta+z_1|>|\zeta|+|z_2|, |\zeta|>|z_1|\},\\
    S' &= \{(z_1, \zeta): |\zeta+z_1|>|\zeta|>|z_1|\}, \\
    T &= \{(z_1, \zeta, z_2): |\zeta|>|z_1|>|z_2|\}, 
\end{align*}
and apply Lemma \ref{IterSeries} (\ref{IterSeries-L}), we see that (\ref{Cocycle-1-5}) converges absolutely to $f(z_1, z_2, \zeta)$ when $|\zeta|>|z_1|>|z_2|$. Thus the series (\ref{Cocycle-1-1}) converges absolutely to the $\overline{W_2}$-valued rational function (\ref{Cocycle-1-1-R}).
\item For the right action term in the cocycle condition, we know that 
\begin{align}
    (Y_\H^{s(R)}(v_2, z_2)\Phi(v_1; z_1))(\zeta)w_1 &= \sum_{n=0}^{\infty} \left((Y_\H^{s(R)}(v_2, z_2)\phi^{(n)})(v_1, \zeta)w_1 \right)\frac{z_1^n}{n!}\nonumber \\
    &=\sum_{n=0}^{\infty} \left(\iota_{\zeta z_2} E(\phi^{(n)}(v_1, \zeta)Y_{W_1}(v_2, \zeta+z_2)w_1) \right)\frac{z_1^n}{n!}. \label{Cocycle-2-1}
\end{align}
Similarly, we can show that the (\ref{Cocycle-2-1}) converges absolutely to the same $\overline{W_2}$-value rational function as 
\begin{align}
    \phi(v_1, \zeta+z_1)Y_{W_1}(v_2, \zeta+z_2)w_1. \label{Cocycle-2-1-R}
\end{align}
While we will not repeat the argument here, it should be noted that the region of convergence is $|\zeta|>|z_2|>|z_1|$ and is disjoint to that of the series (\ref{Cocycle-1-1}). Nevertheless, it does not really matter in our subsequent discussion. 

\item For the middle term in the cocycle condition, we first note that
$$\Phi(Y(v_1, z_1-\eta)Y(v_2, z_2-\eta)\one, \eta)$$
converges absolutely to something in $\reallywidetilde{\H_N(W_1, W_2)}_{z_1z_2}$ in the region $|\eta|>|z_1-\eta|>|z_2-\eta|>0$ that is independent of the choice of $\eta$. If we evaluate $\eta=z_2$ then apply to $w_1$, we obtain the following iterated series
\begin{align}
    (\Phi(Y(v_1, z_1-z_2)v_2, z_2))(\zeta)w_1&= \sum_{n=0}^\infty \left((\phi^{(n)}(Y(v_1, z_1-z_2)v_2, \zeta)w_1\right)\frac{z_2^n}{n!}\nonumber\\
    &= \sum_{n=0}^\infty \left(\sum_{m\in \Z} \left(\phi^{(n)} ((v_1)_m v_2, \zeta)w_1\right) (z_1-z_2)^{-m-1}\right)\frac{z_2^n}{n!}\label{Cocycle-3-1}
\end{align}
If we only assumed the composable condition of $\Phi$, then it is not necessary for (\ref{Cocycle-3-1}) to converge to a rational function. However, $\Phi$ is now a 1-cocycle in the bimodule $\H_N(W_1, W_2)$, thus we have
\begin{align}
E((\Phi(Y(v_1, z_1-z_2)v_2, z_2))(\zeta)w_1) = E((Y_\H^L(v_1, z_1)\Phi)(v_2, z_2)w_1) + E((Y_\H^{s(R)}(v_2, z_2)\Phi)(v_1, z_1)w_1).    \label{Cocycle-condition}
\end{align}
Since the $\overline{W_2}$-valued rational function on the right-hand-side are given by $Y_{W_2}(v_1, \zeta+z_1)\phi(v_2, \zeta+z_2)w_1$ and $\phi(v_1, \zeta+z_1)Y_{W_1}(v_2, \zeta+z_2)w_1$, from the definition of $\H_N(W_1, W_2)$, there exists constants $p_{1}, p_2$, $p_{12}$ depending only on the pairs $(v_1, w_1), (v_2, w_2)$ and $(v_1, v_2)$ respectively, such that 
\begin{align}
    (z_1-z_2)^{p_{12}}(z_1+\zeta)^{p_1}(z_2+\zeta)^{p_2} (\Phi(Y(v_1, z_1-z_2)v_2, z_2))(\zeta)w_1 \in W_2[[z_1, z_2, \zeta]]\label{Cocycle-3-2}
\end{align}
Therefore, the series (\ref{Cocycle-3-1}) is precisely the product of the series (\ref{Cocycle-3-2}) with the expansions of $$\iota_{\zeta z_2}(\zeta+z_2)^{-p_{2}} = \sum_{n=0}^\infty \binom{-p_2}{n} \zeta^{-p_2-n}z_2^n$$
and 
$$\iota_{\zeta, z_2+z_1-z_2}(\zeta+z_1)^{-p_{1}} = \sum_{n=0}^\infty \binom{-p_1}{n} \zeta^{-p_1-n}\sum_{i=0}^n \binom{n}i (z_1-z_2)^{n-i}z_2^i.$$
Consequently, the series (\ref{Cocycle-3-1}) converges absolutely to a $\overline{W_2}$-valued rational function with the only possible poles at $z_1+\zeta = 0, z_2+\zeta=0, z_1-z_2=0$ when $|\zeta|>|z_1-z_2|+|z_2|, |z_1-z_2|>0$. 
\end{enumerate}

\end{proof}

\begin{rema}
The proof of the convergence unfortunately does not generalize to higher cohomologies. Details will be discussed in the subsequent paper \cite{Q-Ext-n}. 
\end{rema}

\begin{prop}\label{der-weak-assoc}
Let $\Phi: V\to \reallywidetilde{\H_N(W_1, W_2)}_{z}$ be a 1-cocycle. For $v\in V$, let $\phi(v, \zeta)=(\Phi(v;0))(\zeta)\in \H_N(W_1, W_2)$. Then for every $v_1\in V, w_1\in W_1$, there exists $p\in \N$, such that for every $v_2\in V$, 
\begin{align*}
    (z_1+\zeta)^p\phi(Y(v_1, z_1)v_2, \zeta)w_1 &= (z_1+\zeta)^pY_{W_2}(v_1, z_1+\zeta)\phi(v_2, \zeta)w_1 \\ &\quad + (z_1+\zeta)^p\phi(v_1, \zeta+z_1)Y_{W_1}(v_2, \zeta)w_1
\end{align*}
in $W_2((z_1, \zeta))$. 
\end{prop}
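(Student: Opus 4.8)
The plan is to recognize the asserted equality as a weak-associativity formula for the derivation $\phi$ attached to the cocycle $\Phi$, and to obtain it from the three convergent series of Proposition~\ref{der-weak-assoc-conv} together with the cocycle relation, tracking pole orders carefully enough to produce an exponent $p$ not depending on $v_2$. To begin, by Proposition~\ref{der-weak-assoc-conv} the series $\Phi(Y(v_1, z_1 - z_2)v_2; z_2)(\zeta)w_1$, $(Y_\H^L(v_1, z_1)\Phi(v_2; z_2))(\zeta)w_1$ and $(Y_\H^{s(R)}(v_2, z_2)\Phi(v_1; z_1))(\zeta)w_1$ converge on (individually nonempty) regions to $\overline{W_2}$-valued rational functions in $z_1, z_2, \zeta$, and the cocycle relation $\hat{\delta}^1_\infty\Phi = 0$ identifies the first with the sum of the other two. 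The polar loci of all three lie among $z_1 = z_2$, $z_1 + \zeta = 0$ and $z_2 + \zeta = 0$; as none of these is $z_2 = 0$, one may restrict to $z_2 = 0$ to obtain an identity $g = g_1 + g_2$ of $\overline{W_2}$-valued rational functions in $(z_1, \zeta)$, where $g = E(\phi(Y(v_1, z_1)v_2, \zeta)w_1)$, $g_1 = E(Y_{W_2}(v_1, \zeta + z_1)\phi(v_2, \zeta)w_1)$ and $g_2 = E(\phi(v_1, \zeta + z_1)Y_{W_1}(v_2, \zeta)w_1)$; each of these has possible poles only at $z_1 = 0$, $\zeta = 0$ and $z_1 + \zeta = 0$, and the three corresponding formal series converge simultaneously on the nonempty open set $\{|\zeta + z_1| > |\zeta| > |z_1| > 0\}$, where $\phi(Y(v_1, z_1)v_2, \zeta)w_1$ equals $Y_{W_2}(v_1, \zeta + z_1)\phi(v_2, \zeta)w_1 + \phi(v_1, \zeta + z_1)Y_{W_1}(v_2, \zeta)w_1$ as analytic functions.

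Next I would bound the order of the pole along $z_1 + \zeta = 0$ uniformly in $v_2$, by applying condition~(3) of Proposition-Definition~\ref{Def-H_N} (the composability condition) twice. Applied to $g_1$ --- the case $k = 1$, $l = 0$ with $\phi$ taken to be the element $\Phi(v_2; 0)$ of $\H_N(W_1, W_2)$ --- it shows the pole of $g_1$ along $z_1 + \zeta = 0$ has order at most the constant $r_1$ there, which by the statement of that condition depends only on the pair $(v_1, w_1)$, not on the chosen element of $\H_N(W_1, W_2)$ and hence not on $v_2$. Applied to $g_2$ --- the case $k = 0$, $l = 1$ with $\phi = \Phi(v_1; 0)$ --- it shows the pole of $g_2$ along $z_1 + \zeta = 0$ has order at most the constant $m$ attached to the pair $(\Phi(v_1; 0), w_1)$, again independent of $v_2$. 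Put $p = \max\{r_1, m\}$; then $p \in \N$ depends only on $v_1$ and $w_1$ (and the fixed data $\Phi$, $W_1$, $W_2$), and by the previous step the pole of $g$ along $z_1 + \zeta = 0$ is also of order $\le p$.

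Finally I would pass back to formal series. Multiplying $g = g_1 + g_2$ by $(z_1 + \zeta)^p$: since each of $g$, $g_1$, $g_2$ has poles only along $z_1 = 0$, $\zeta = 0$ and $z_1 + \zeta = 0$ with order at most $p$ along the last, each of $(z_1 + \zeta)^p g$, $(z_1 + \zeta)^p g_1$, $(z_1 + \zeta)^p g_2$ is a rational function with poles only along $z_1 = 0$ and $\zeta = 0$, that is, a Laurent polynomial in $z_1, \zeta$ with coefficients in $W_2$ (by $\d$-conjugation the coefficient of each monomial $z_1^i \zeta^j$ is a single weight-homogeneous component). Each of the three formal series in the statement is, in its own natural expansion domain, the Laurent expansion of $g$, $g_1$ or $g_2$; multiplying such an expansion by the polynomial $(z_1 + \zeta)^p$ yields the expansion of the product, and the Laurent expansion of a Laurent polynomial is that polynomial itself, independently of the domain. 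Hence $(z_1 + \zeta)^p$ times each of the three series equals the corresponding Laurent polynomial, and $g = g_1 + g_2$ then gives the asserted identity in $W_2((z_1, \zeta))$.

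I expect the last step to be the main obstacle: one must be certain that multiplication by $(z_1 + \zeta)^p$ genuinely removes every ambiguity in how $Y_{W_2}(v_1, z_1 + \zeta)$ and the shifted arguments $\zeta + z_1$ are expanded, so that each term unambiguously defines an element of $W_2((z_1, \zeta))$ --- which is exactly why one needs a single exponent $p$, and in particular its independence of $v_2$ established above. A secondary point requiring care is that the restriction to $z_2 = 0$ is legitimate at the level of the formal series, not just the rational functions, which one reads off from the way the convergence regions of Proposition~\ref{der-weak-assoc-conv} behave as $z_2 \to 0$.
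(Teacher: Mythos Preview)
Your proposal is correct and follows essentially the same approach as the paper: use Proposition~\ref{der-weak-assoc-conv} and the cocycle relation to get an identity of $\overline{W_2}$-valued rational functions, specialize $z_2=0$, clear the pole along $z_1+\zeta=0$ with an exponent independent of $v_2$, and read off the formal identity in $W_2((z_1,\zeta))$. The paper's proof differs only in ordering: it multiplies by $(z_1-z_2)^{p_{12}}(z_1+\zeta)^{p_1}(z_2+\zeta)^{p_2}$ first (so that everything is polynomial and the evaluation $z_2=0$ is trivially legitimate), then divides back by $z_1^{p_{12}}\zeta^{p_2}$; it takes the constant $p_1$ from the proof of Proposition~\ref{der-weak-assoc-conv} rather than re-deriving it from the composability condition as you do, but the content is the same.
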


\begin{proof}
From the cocycle condition (\ref{Cocycle-condition}) together with the conclusion of Proposition \ref{der-weak-assoc-conv}, we have
$$E(\Phi(Y(v_1, z_1-z_2)v_2, z_2)w_1) = E(Y_{W_2}(v_1, z_1+\zeta)\phi(v_2, z_2+\zeta)w_1) + E(\phi(v_1, z_1+\zeta)Y_{W_1}(v_2, z_2+\zeta)w_1))$$
Multiplying both side by $(z_1-z_2)^{p_{12}}(z_1+\zeta)^{p_1}(z_2+\zeta)^{p_2}$ where $p_1, p_2, p_{12}$ are numbers depending only on the pairs $(v_1, w_1), (v_2, w_2), (v_1, v_2)$ (see the proof of Proposition \ref{der-weak-assoc-conv}), then evaluate $z_2=0$ on both sides, we have 
\begin{align*}
    z_1^{p_{12}}(z_1+\zeta)^{p_1}\zeta^{p_2}\phi(Y(v_1, z_1)v_2, \zeta)w_1 &= z_1^{p_{12}}(z_1+\zeta)^{p_1}\zeta^{p_2}Y_{W_2}(v_1, z_1+\zeta)\phi(v_2, \zeta)w_1 \\ &\quad + z_1^{p_{12}}(z_1+\zeta)^{p_1}\zeta^{p_2}\phi(v_1, \zeta+z_1)Y_{W_1}(v_2, \zeta)w_1 
\end{align*}
as series in $W_2[[z_1, \zeta]]$. The conclusion then follows by dividing both sides by $z_1^{p_{12}}$ and $\zeta^{p_2}$. 
\end{proof}

\begin{prop}\label{inner-der}
Let $\Phi: V\to \reallywidetilde{\H_N(W_1, W_2)}_{z}$ be a 1-coboundary. For $v\in V$, let $\phi(v, \zeta)=(\Phi(v;0))(\zeta)\in \H_N(W_1, W_2)$. Then  $\phi(v,\zeta) = Y_{W_2}(v, \zeta)\phi_{-1} - \phi_{-1}Y_{W_1}(v, \zeta)$ for some linear map $\phi_{-1}: W_1 \to W_2$. 
\end{prop}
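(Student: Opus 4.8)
The plan is to unwind the definition of a $1$-coboundary in $\hat{C}^1_\infty(V, \H_N(W_1, W_2))$ and translate it into a statement about the maps $\phi(v, \zeta)$ acting on $W_1$. By definition, $\Phi = \hat{\delta}^0(\omega)$ for some vacuum-like element $\omega \in \hat{C}^0(V, \H_N(W_1, W_2))$, i.e.\ $\omega \in \H_N(W_1, W_2)_{[0]}$ with $D_\H \omega = 0$. The first step is to understand such an $\omega$ concretely: since $D_\H \omega = 0$ means $\frac{\partial}{\partial \zeta}(\omega(\zeta) w_1) = 0$ for all $w_1 \in W_1$, the map $\omega(\zeta)$ is a \emph{constant} $\overline{W_2}$-valued function, so it is given by a fixed linear map $\phi_{-1} := \omega(\zeta): W_1 \to \overline{W_2}$. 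Combined with the $\d$-conjugation property (weight $0$) and the lower-bound/grading conditions on $W_1, W_2$, one checks $\phi_{-1}$ actually lands in $W_2$ (it is a grading-preserving linear map, as in the Example describing $\hat H^1$).

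Next I would compute $\hat{\delta}^0(\omega)$ explicitly. By the definition of $\hat\delta^0$, for $v \in V$,
$$[(\hat{\delta}^0(\omega))(v)](z) = E\!\left(Y_\H^L(v, z)\omega - Y_\H^{s(R)}(v, z)\omega\right),$$
and evaluating the $\widetilde{\H_N(W_1,W_2)}_z$-valued function at $z = 0$ gives $\phi(v, \zeta) = (\Phi(v;0))(\zeta)$. Now I apply the explicit formulas for $Y_\H^L$ and $Y_\H^{s(R)}$ from Proposition-Definition \ref{Def-H_N}: acting on $w_1 \in W_1$ with parameter $\zeta$,
$$\left(Y_\H^L(v, z)\omega\right)(\zeta) w_1 = \iota_{\zeta z} E\!\left(Y_{W_2}(v, z+\zeta)\omega(\zeta) w_1\right) = \iota_{\zeta z} E\!\left(Y_{W_2}(v, z+\zeta)\phi_{-1} w_1\right),$$
$$\left(Y_\H^{s(R)}(v, z)\omega\right)(\zeta) w_1 = \iota_{\zeta z} E\!\left(\omega(\zeta) Y_{W_1}(v, z+\zeta) w_1\right) = \iota_{\zeta z} E\!\left(\phi_{-1} Y_{W_1}(v, z+\zeta) w_1\right),$$
using $\omega(\zeta) = \phi_{-1}$ constant in both lines. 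Taking the limit $z \to 0$ (equivalently setting $z = 0$ in the resulting $\overline{W_2}$-valued rational function, which is regular there since $Y_{W_2}(v,\zeta)$ and $Y_{W_1}(v,\zeta)$ involve only finitely many negative powers of $\zeta$ on a fixed vector) yields precisely
$$\phi(v, \zeta) w_1 = E\!\left(Y_{W_2}(v, \zeta)\phi_{-1} w_1 - \phi_{-1} Y_{W_1}(v, \zeta) w_1\right) = \left(Y_{W_2}(v, \zeta)\phi_{-1} - \phi_{-1} Y_{W_1}(v, \zeta)\right) w_1,$$
which is the desired identity.

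The only subtle points — and what I expect to be the main (minor) obstacle — are bookkeeping items rather than deep ones: first, justifying that the limit $z \to 0$ can be taken term-by-term and commutes with the expansions $\iota_{\zeta z}$, which follows because for fixed $v$ and fixed $w_1$ the series $Y_{W_2}(v, z+\zeta)\phi_{-1}w_1$ and $\phi_{-1}Y_{W_1}(v, z+\zeta)w_1$ are lower-truncated in each slot and the relevant $\overline{W_2}$-valued rational functions have no pole at $z = 0$; second, confirming that $\phi_{-1}$ maps $W_1$ into $W_2$ rather than merely $\overline{W_2}$, which comes from the $\mathbf{d}$-conjugation property forcing $\phi_{-1}$ to be grading-preserving together with the fact that each graded piece $\overline{W_2}_{[m]} = (W_2)_{[m]}$ is the honest homogeneous subspace. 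With these observations in place the proof is essentially a direct computation, and I would present it in that order: identify $\omega$ with the constant map $\phi_{-1}$, plug into the formula for $\hat\delta^0$, apply the definitions of $Y_\H^L, Y_\H^{s(R)}$, and set $z = 0$.
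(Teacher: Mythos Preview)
Your proposal is correct and follows essentially the same approach as the paper: identify the vacuum-like element with a constant map $\phi_{-1}: W_1 \to W_2$, plug into the formulas for $Y_\H^L$ and $Y_\H^{s(R)}$, and set $z=0$. The paper's proof is slightly terser on the two bookkeeping points you flag (that $\phi_{-1}$ lands in $W_2$ and that $z=0$ may be substituted), but the argument is the same.
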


\begin{proof}
Since $\Phi$ is a 1-coboundary, there exists some $\phi\in \H_N(W_1, W_2)$ satisfying $D_\H\phi = 0$, such that,
\begin{align}
    \Phi(v;z) = Y_\H^L(v,z)\phi - Y_\H^{s(R)}(v, z)\phi \label{Coboundary-1}
\end{align}
Note that since $D_\H\phi = 0$ means for every $w_1\in W_1$, $(\partial / \partial \zeta)\phi(\zeta)w_1 = 0$, which further implies that $\phi(\zeta)w_1 = \phi_{-1}(w_1)\in W_2$. Moreover, for every $v\in V$, 
\begin{align}
    Y_\H^L(v, z)\phi(\zeta)w_1 &= \iota_{\zeta z} Y_{W_2}(v, \zeta+z) \phi_{-1}w_1 \label{Coboundary-2}\\
    Y_\H^{s(R)}(v, z)\phi(\zeta)w_1 &= \iota_{\zeta z} \phi_{-1} Y_{W_1}(v, \zeta+z) w_1 \label{Coboundary-3}
\end{align}
Since $D_\H\phi=0$, we are allowed to evaluate $z=0$ in (\ref{Coboundary-1}), (\ref{Coboundary-2}) and (\ref{Coboundary-3}), so as to conclude that 
$$(\Phi(v;0))(\zeta)w_1 = Y_{W_2}(v, \zeta)\phi_{-1}w_1 - \phi_{-1} Y_{W_1}(v,\zeta),$$
the left-hand-side of which is precisely $\phi(v,\zeta)w_1$. 


\end{proof}

\subsection{Extension in $\Ext^1_N(W_1, W_2)$ from a derivation}

\begin{prop}\label{Der-to-Ext-Prop}
Let $F: V\to \H_N(W_1, W_2)$ be a derivation. Then for every $v\in V$, $F(v)$ is a map $W_1 \to \widehat{(W_2)}_\zeta$. We use the notation $F(v, \zeta)w_1$ to denote the image of $w_1$ via the map $F(v)$ in $\widehat{(W_2)}_\zeta$. 
Then the vector space $$U = W_2 \amalg W_1$$
equipped with the vertex operator map 
$$Y_U(v, x)(w_2, w_1) = (Y_{W_2}(v, x)w_2 + F(v, x) w_1, Y_{W_1}(v, x)w_1),$$
the operator $\d_U = \d_{W_2}\amalg \d_{W_1}$  forms a left $V$-module that fits in the exact sequence 
$$\xymatrix{ 0 \ar[r] & W_2 \ar[r] & U \ar[r]^p & W_2 \ar[r] & 0.  }$$
where the map $p: U = W_2\amalg W_1 \to W_1$ is precisely the projection. Moreover, $U$ satisfies the conditions in Definition \ref{Ext_N-def}. 
\end{prop}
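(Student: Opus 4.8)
The plan is to verify directly that the data $(U, Y_U, \d_U, D_U)$ satisfies all the axioms of Definition \ref{DefMOSVA-L}, then check exactness of the sequence, and finally check the conditions of Definition \ref{Ext_N-def}. Throughout, the guiding principle is that the $W_2$-component of $Y_U$ is ``$W_2$-vertex operator plus a correction term $F(v,x)w_1$,'' and $F$ being a derivation into $\H_N(W_1,W_2)$ is precisely what makes the correction term compatible with the axioms.

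First I would dispose of the grading axioms: since $\d_U = \d_{W_2}\amalg\d_{W_1}$ and $D_U = D_{W_2}\amalg D_{W_1}$, the lower bound condition and the $\mathbf d$-grading condition are immediate from those for $W_1$ and $W_2$ together with the fact that $F$ is grading-preserving (which is built into $F$ landing in $\H_N(W_1,W_2)$, an object with a weight grading, and the $\mathbf d$-conjugation property of its elements). The $\mathbf d$-commutator formula on $U$ splits into the $W_1$- and $W_2$-components; the $W_1$-component is the $\mathbf d$-commutator formula for $W_1$, and the $W_2$-component reduces, after subtracting the $W_2$-commutator formula, to the identity $[\mathbf d_{W_2}, F(v,x)] - F(\mathbf d_V v, x) - x\tfrac{d}{dx}F(v,x) = 0$ acting on $W_1$, which is exactly the $\mathbf d$-conjugation property of $F(v,\zeta)\in\H_N(W_1,W_2)$ combined with $\mathbf d_{\H}$-homogeneity; equivalently one writes $F(v,\zeta) = \zeta^{\d_\H} F(v)$-style and differentiates. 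The identity property $Y_U(\one,x)=1_U$ holds because $F(\one, x) = 0$: indeed $\one$ is the vacuum, $Y_\H^L(\one,x)\phi = \phi$ and $Y_\H^{s(R)}(\one,x)\phi=\phi$, so the derivation property forces $F(\one)$ to be $\d_\H$-annihilated and... more carefully, apply the derivation identity with $u=v=\one$ to see $F(\one)$ is killed, or simply note $F(\one)$ must be a vacuum-like constant and a degree count kills it; in any case $F(\one,x)=0$. The $D$-derivative and $D$-commutator formulas again split; the $W_1$-components are inherited, and the $W_2$-components reduce to $\tfrac{d}{dx}F(v,x) = F(D_V v, x) = D_{W_2}F(v,x) - F(v,x)D_{W_1}$ on $W_1$, which is precisely the $D$-derivative property of the element $F(v,\zeta)\in\H_N(W_1,W_2)$ (Proposition-Definition \ref{Def-H_N}(1)) together with the fact that $F$ intertwines $D_V$ with $D_\H$.

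The heart of the argument is weak associativity with the pole-order condition, and this is where I expect the main obstacle to lie. Fix $v_1\in V$, $(w_2,w_1)\in U$; I want $p\in\N$ with $(x_0+x_2)^p Y_U(v_1,x_0+x_2)Y_U(v_2,x_2)(w_2,w_1) = (x_0+x_2)^p Y_U(Y(v_1,x_0)v_2,x_2)(w_2,w_1)$ for all $v_2$. The $W_1$-component is weak associativity for $W_1$. The $W_2$-component, after expanding $Y_U$ and using weak associativity for $W_2$ on the $w_2$-part, reduces to the single identity
$$
(x_0+x_2)^p\big(Y_{W_2}(v_1,x_0+x_2)F(v_2,x_2)w_1 + F(v_1,x_0+x_2)Y_{W_1}(v_2,x_2)w_1\big) = (x_0+x_2)^p F(Y(v_1,x_0)v_2, x_2)w_1.
$$
This is exactly the ``weak associativity for the derivation $F$'' and it is the content of Proposition \ref{der-weak-assoc} applied to the $1$-cocycle $\Phi(v;z) = e^{zD_\H}F(v)$ associated to $F$ (the correspondence derivations $\leftrightarrow$ $1$-cocycles is the Example on first cohomology; that $\Phi$ is composable and a cocycle is exactly the translation of $F$ being a derivation into $\H_N(W_1,W_2)$). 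The pole order $p$ supplied by Proposition \ref{der-weak-assoc} depends only on the pair $(v_1,w_1)$, so taking the maximum of that $p$, the analogous $p$ for weak associativity of $W_2$ on $w_2$, and that for $W_1$ on $w_1$, gives a uniform $p$ working for all $v_2$, as required. I would state explicitly that one must pass carefully between the formal-variable identity in $W_2((x_0,x_2))$ and the rational-function identity, using that $F(v,\zeta)$ lands in $\widehat{(W_2)}_\zeta$ and the $\iota$-expansion conventions — this bookkeeping is the only genuinely delicate point.

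Finally, exactness of $0\to W_2\to U\to W_1\to 0$ is clear from the construction: $W_2\hookrightarrow U$ as $w_2\mapsto(w_2,0)$ is a submodule inclusion because $Y_U(v,x)(w_2,0) = (Y_{W_2}(v,x)w_2, 0)$, and $p:U\to W_1$, $(w_2,w_1)\mapsto w_1$, is a surjective homomorphism with kernel $W_2$ since the $W_1$-component of $Y_U$ is just $Y_{W_1}$. For the conditions of Definition \ref{Ext_N-def}, take the section $\psi:W_1\to U$, $w_1\mapsto(0,w_1)$; then $U = W_2\amalg\psi(W_1)$ as vector spaces and $\pi_2 Y_U(v,\zeta)\psi(w_1) = F(v,\zeta)w_1$, so $\pi_2 Y_U(v,\zeta)\psi = F(v) \in \H_N(W_1,W_2)$ by hypothesis. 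This completes the verification; I expect the write-up to spend most of its length on the weak-associativity step via Proposition \ref{der-weak-assoc}, with everything else being componentwise bookkeeping.
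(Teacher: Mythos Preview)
Your proposal is correct and follows essentially the same approach as the paper's own proof: both verify the module axioms componentwise, derive the identity property from $F(\one)=0$, obtain the $D$- and $\d$-properties from the corresponding properties in Proposition-Definition \ref{Def-H_N}, and reduce weak associativity to exactly the identity supplied by Proposition \ref{der-weak-assoc}, with the section $\psi(w_1)=(0,w_1)$ giving $\pi_2 Y_U(v,\zeta)\psi = F(v)$. Your additional remark about taking the maximum of the three pole orders (for $W_1$, $W_2$, and the derivation) is a detail the paper leaves implicit but is indeed needed.
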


\begin{proof}
We only give a sketch here. The grading on $U$ is given by the gradings on $W_1$ and $W_2$, with the homogeneous subspaces being $U_{[m]} = (W_2)_{[m]}\amalg (W_1)_{[m]}$. Thus the grading is lower bounded. The $\d$-commutator formula follows from the $\d$-conjugation property in Proposition-Definition \ref{Def-H_N}. The identity property follows from $F(\one) = 0$, due to $F$ being a derivation. The $D$-derivative formula also follows from 
$$F(D_V v, \zeta) = D_\H F(v, \zeta) = \frac d {d\zeta} F(v, \zeta). $$
To show the weak associativity, we notice that from Proposition \ref{der-weak-assoc}, 
$v_1\in V, w_1\in W_1$, there exists $p\in \N$ such that for every $v_2\in V$, 
\begin{align*}
    (x_0+\zeta)^p F(Y(v_1, x_0)v_2, \zeta)w_1 &= (x_0+\zeta)^p Y_{W_2}(v_1, x_0+\zeta)F(v_2,\zeta)w_1\\
    & \quad + (x_0+\zeta)^p F(v_1,x_0+\zeta)Y_{W_1}(v_2, \zeta)w_1. 
\end{align*}
This implies that the vertex operator $Y_U$ satisfies the weak associativity with pole-order condition. The relation $p Y_U(v, x) = Y_{W_1}(v, x) p$ follows trivially from the definition. The conditions of Definition \ref{Ext_N-def} are satisfied with the section $\psi: W_1\to W_2\amalg W_1, w_1 \mapsto (0, w_1)$ and the projection $\pi_2: W_2\amalg W_1 \to W_2, (w_2, w_1)\mapsto w_2$. Indeed, $\pi_2 Y_U(v, x) \psi = F(v, x)$ for every $v\in V$. 
\end{proof}

\begin{prop}
Let $F^{(1)}, F^{(2)}: V\to \H_N(W_1, W_2)$ be two derivations that differ by an inner derivation. Let $U^{(1)}$ and $U^{(2)}$ be the extensions given respectively by $F^{(1)}$ and $F^{(2)}$. Then $U^{(1)}$ and $U^{(2)}$ are equivalent extensions. \end{prop}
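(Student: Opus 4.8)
The plan is to construct the equivalence $T\colon U^{(1)}\to U^{(2)}$ explicitly from the vacuum-like element of $\H_N(W_1,W_2)$ that implements the inner derivation $F^{(1)}-F^{(2)}$; this is precisely the computation of Proposition~\ref{equiv-ext-inner-prop}, read in reverse.

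Write $G=F^{(1)}-F^{(2)}$. By hypothesis $G$ is a (grading-preserving) inner derivation of the bimodule $\H_N(W_1,W_2)$, so there is a vacuum-like element $\phi\in\H_N(W_1,W_2)$ --- meaning $D_\H\phi=0$ and $\d_\H\phi=0$ --- with $G(v)=\lim_{x\to0}\big(Y_\H^L(v,x)\phi-Y_\H^{s(R)}(v,x)\phi\big)$ for all $v\in V$. Exactly as in the proof of Proposition~\ref{inner-der}, $D_\H\phi=0$ forces $\phi(\zeta)w_1=\phi_{-1}(w_1)\in W_2$ to be independent of $\zeta$ for a fixed linear map $\phi_{-1}\colon W_1\to W_2$, and the $D$-derivative property and $\d$-conjugation property of $\phi$ in Proposition-Definition~\ref{Def-H_N} then give $D_{W_2}\phi_{-1}=\phi_{-1}D_{W_1}$ and $\d_{W_2}\phi_{-1}=\phi_{-1}\d_{W_1}$; in particular $\phi_{-1}$ is grading-preserving. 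Moreover, again by Proposition~\ref{inner-der},
$$G(v,\zeta)w_1=Y_{W_2}(v,\zeta)\phi_{-1}w_1-\phi_{-1}Y_{W_1}(v,\zeta)w_1,$$
and this identity passes, after expansion about $\zeta=0$, to the formal series that feed into the vertex operators of $U^{(1)}$ and $U^{(2)}$.

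Next I would define $T\colon U^{(1)}\to U^{(2)}$ on the common underlying vector space $W_2\amalg W_1$ by $T(w_2,w_1)=(w_2+\phi_{-1}(w_1),\,w_1)$. It is a linear bijection, with inverse $(w_2,w_1)\mapsto(w_2-\phi_{-1}(w_1),\,w_1)$. Since $\phi_{-1}$ is grading-preserving and commutes with $D_{W_1}$ and $D_{W_2}$, it is immediate that $T\d_{U^{(1)}}=\d_{U^{(2)}}T$ and $TD_{U^{(1)}}=D_{U^{(2)}}T$. For the vertex operators I would use $Y_{U^{(i)}}(v,x)(w_2,w_1)=(Y_{W_2}(v,x)w_2+F^{(i)}(v,x)w_1,\,Y_{W_1}(v,x)w_1)$ and compare $T\,Y_{U^{(1)}}(v,x)(w_2,w_1)$ with $Y_{U^{(2)}}(v,x)\,T(w_2,w_1)$: their $W_1$-components coincide trivially, and their $W_2$-components coincide if and only if
$$F^{(1)}(v,x)w_1-F^{(2)}(v,x)w_1=Y_{W_2}(v,x)\phi_{-1}w_1-\phi_{-1}Y_{W_1}(v,x)w_1,$$
which is exactly the identity for $G(v,x)w_1$ obtained above. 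Hence $T$ intertwines the vertex operators, so $T$ is a $V$-module homomorphism, and being bijective it is an isomorphism.

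Finally I would check that $T$ fits into the commutative diagram~(\ref{CommDiag}): on $W_2\subset U^{(1)}$, that is on elements $(w_2,0)$, we have $T(w_2,0)=(w_2,0)$, so $T$ restricts to the identity on $W_2$ and the left square commutes; and since $p^{(1)}$ and $p^{(2)}$ are the projections onto the $W_1$-summand while $T$ leaves that summand unchanged, $p^{(2)}\circ T=p^{(1)}$, so the right square commutes. Therefore $U^{(1)}$ and $U^{(2)}$ are equivalent extensions. I do not expect a genuine obstacle; the only point that needs attention is the usual bookkeeping converting the $\overline{W_2}$-valued rational function $F^{(i)}(v,\zeta)$ into the formal expansion appearing in $Y_{U^{(i)}}(v,x)$, and this is handled exactly as in the proof of Proposition~\ref{Der-to-Ext-Prop}.
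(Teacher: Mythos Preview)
Your proposal is correct and follows essentially the same approach as the paper: both use Proposition~\ref{inner-der} to extract the linear map $\phi_{-1}\colon W_1\to W_2$ from the inner derivation, define $T(w_2,w_1)=(w_2+\phi_{-1}(w_1),w_1)$, and verify $T\,Y_{U^{(1)}}(v,x)=Y_{U^{(2)}}(v,x)\,T$. The paper's proof is in fact only a sketch (``It is straightforward to check''), so your version is more detailed, additionally verifying compatibility with $\d$ and $D$ and the commutativity of diagram~(\ref{CommDiag}), but the underlying argument is identical.
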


\begin{proof}
We only give a sketch here. From Proposition \ref{inner-der}, we see that 
$$F^{(1)}(v, \zeta) - F^{(2)}(v, \zeta) = Y_{W_2}(v, \zeta) \phi_{-1} - \phi_{-1}Y_{W_1}(v, \zeta)$$
for some linear homogeneous map $\phi_{-1}: W_1\to W_2$. Define $T: U^{(1)} = W_2\amalg W_1\to W_2\amalg W_1 = U^{(2)}$ by 
$$T(w_2, w_1) = (w_2 + \phi_{-1}(w_1), w_1)$$
It is straightforward to check that for every $v\in V$,
$$T Y_U^{(1)}(v,x) = Y_{U}^{(2)}(v, x)T$$
where $Y_{U}^{(1)}$ and $Y_U^{(2)}$ be the vertex operators constructed in Proposition \ref{Der-to-Ext-Prop}. 
\end{proof}

\begin{cor}\label{def-cal-G}
The map $\mathcal{G}: \widehat{H}^1(V, \H_N(W_1, W_2))\to\Ext_N^1(W_1, W_2)$ sending a derivation $F$ to equivalence class of the module $U=W_2\amalg W_1$ given in Proposition \ref{Der-to-Ext-Prop} is well-defined. 
\end{cor}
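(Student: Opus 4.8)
The statement is an immediate consequence of the two preceding propositions, so the plan is simply to assemble them. First I would recall, from the explicit description of the first cohomology given earlier, that every class in $\widehat{H}^1(V,\H_N(W_1,W_2))$ is represented by a grading-preserving derivation $F: V\to \H_N(W_1,W_2)$, and that two such derivations represent the same class precisely when their difference is a grading-preserving inner derivation. Thus to prove $\mathcal{G}$ well-defined I need two things: (i) for each derivation $F$, the module $U = W_2\amalg W_1$ built in Proposition \ref{Der-to-Ext-Prop} determines a genuine class in $\Ext_N^1(W_1,W_2)$; and (ii) if $F^{(1)}$ and $F^{(2)}$ differ by an inner derivation, then the corresponding modules $U^{(1)}$ and $U^{(2)}$ are equivalent extensions.

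For (i), nothing new is needed: Proposition \ref{Der-to-Ext-Prop} already shows that $U$ is a left $V$-module fitting into the short exact sequence $0\to W_2\to U\to W_1\to 0$ and that, with the section $\psi: w_1\mapsto(0,w_1)$ and the projection $\pi_2: (w_2,w_1)\mapsto w_2$, one has $\pi_2 Y_U(v,\zeta)\psi = F(v,\zeta)\in\H_N(W_1,W_2)$; hence $U$ satisfies the defining property of Definition \ref{Ext_N-def} and $[U]\in\Ext_N^1(W_1,W_2)$. For (ii), this is exactly the content of the proposition stated immediately before this corollary: using Proposition \ref{inner-der} to write $F^{(1)}(v,\zeta)-F^{(2)}(v,\zeta)=Y_{W_2}(v,\zeta)\phi_{-1}-\phi_{-1}Y_{W_1}(v,\zeta)$ for a linear map $\phi_{-1}: W_1\to W_2$, the map $T(w_2,w_1)=(w_2+\phi_{-1}(w_1),w_1)$ intertwines $Y_U^{(1)}$ with $Y_U^{(2)}$, restricts to the identity on $W_2$, and induces the identity on $W_1$, hence fits into the commuting diagram (\ref{CommDiag}); so $U^{(1)}$ and $U^{(2)}$ are equivalent.

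Combining (i) and (ii), the assignment $F\mapsto[U]$ is constant on cohomology classes and takes values in $\Ext_N^1(W_1,W_2)$, which is precisely the assertion that $\mathcal{G}$ is well-defined. I do not expect any real obstacle here, since all the substantive work has been carried out in Propositions \ref{Der-to-Ext-Prop} and \ref{inner-der} and the unnamed proposition preceding the corollary; the only point deserving a moment's care is to confirm that the equivalence relation defining $\widehat{H}^1$ (derivations modulo inner derivations) matches the equivalence relation on extensions (existence of a $T$ making (\ref{CommDiag}) commute) — which is exactly why Proposition \ref{inner-der} was formulated so as to produce an honest linear map $\phi_{-1}: W_1\to W_2$ rather than merely an abstract vacuum-like element.
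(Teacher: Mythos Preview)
Your proposal is correct and matches the paper's approach exactly: the corollary is stated without proof in the paper precisely because it follows immediately from Proposition \ref{Der-to-Ext-Prop} (giving (i)) and the unnamed proposition just before it (giving (ii)), which is exactly how you have organized the argument.
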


\subsection{Proof of the main theorem}

\begin{thm}\label{Main-Ext-1}
$\Ext_N^1(W_1, W_2)$ is in one-to-one correspondence to $\widehat{H}^1(V, \H_N(W_1, W_2))$. 
\end{thm}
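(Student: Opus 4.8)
The plan is to assemble the one-to-one correspondence from the two maps already constructed, namely $\mathcal{F}$ from Corollary \ref{def-cal-F} and $\mathcal{G}$ from Corollary \ref{def-cal-G}, by showing that they are mutually inverse. Since both maps are already known to be well-defined on the level of equivalence classes (respectively cohomology classes), it suffices to check the two composites $\mathcal{G}\circ \mathcal{F}$ and $\mathcal{F}\circ \mathcal{G}$ are the respective identities; this is purely a bookkeeping verification.

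First I would compute $\mathcal{F}\circ \mathcal{G}$. Start with a derivation $F: V\to \H_N(W_1, W_2)$, build the extension $U = W_2\amalg W_1$ with vertex operator $Y_U(v,x)(w_2,w_1) = (Y_{W_2}(v,x)w_2 + F(v,x)w_1, Y_{W_1}(v,x)w_1)$ as in Proposition \ref{Der-to-Ext-Prop}. Then $\mathcal{F}$ is computed with the canonical section $\psi(w_1) = (0,w_1)$ and projection $\pi_2(w_2,w_1) = w_2$; but the last sentence of Proposition \ref{Der-to-Ext-Prop} already records that $\pi_2 Y_U(v,x)\psi = F(v,x)$. Hence $\mathcal{F}(\mathcal{G}(F)) = F$ on the nose, and a fortiori on cohomology classes. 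The only subtlety worth a sentence is that $\mathcal{F}$ is a priori defined using \emph{some} section satisfying Definition \ref{Ext_N-def}, but Remark \ref{rmk-indep-section} (via Proposition \ref{equiv-ext-inner-prop}) guarantees the resulting cohomology class is independent of that choice, so using the canonical $\psi$ is legitimate.

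Next I would compute $\mathcal{G}\circ \mathcal{F}$. Start with an extension $U$ representing a class in $\Ext_N^1(W_1, W_2)$, with section $\psi: W_1\to U$ and projection $\pi_2: U\to W_2$ as in Definition \ref{Ext_N-def}, so $U = W_2 \amalg \psi(W_1)$ as vector spaces. Set $F(v,\zeta) = \pi_2 Y_U(v,\zeta)\psi$, which is a derivation by Proposition \ref{Ext-der-prop}; then $\mathcal{G}(F)$ is the module $\tilde{U} = W_2\amalg W_1$ with the vertex operator from Proposition \ref{Der-to-Ext-Prop}. I would exhibit the explicit isomorphism $\Theta: \tilde{U}\to U$ given by $\Theta(w_2, w_1) = (w_2, \psi(w_1))$ under the decomposition $U = W_2\amalg\psi(W_1)$, i.e. $\Theta(w_2,w_1) = w_2 + \psi(w_1)$ inside $U$, and check it fits the commuting diagram (\ref{CommDiag}): it is the identity on $W_2$, and $p\circ\Theta$ sends $(w_2,w_1)$ to $p(\psi(w_1)) = w_1$, which is the projection $\tilde U\to W_1$. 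The content is the intertwining relation $\Theta Y_{\tilde U}(v,x) = Y_U(v,x)\Theta$; writing $Y_U(v,x)\psi(w_1) = \pi_2 Y_U(v,x)\psi(w_1) + \pi_1 Y_U(v,x)\psi(w_1)$ and using Proposition \ref{pi-one-module} — which says $\pi_1 Y_U(v,x)\psi(w_1) = \psi(Y_{W_1}(v,x)w_1)$ — together with the fact that $Y_U(v,x)$ preserves $W_2$, one recovers exactly the formula for $Y_{\tilde U}$ with $F(v,x) = \pi_2 Y_U(v,x)\psi$. Thus $\tilde U$ and $U$ are equivalent extensions, so $\mathcal{G}(\mathcal{F}([U])) = [U]$.

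The main obstacle, such as it is, is really just a matter of being careful with the identifications: the section $\psi$ used to define $\mathcal{F}$ is not canonical, so one must invoke Remark \ref{rmk-indep-section} to know $\mathcal{F}$ is well-defined before composing, and one must track which copy of $W_1$ (the abstract one in $\tilde U$ versus $\psi(W_1)\subset U$) each map lands in. None of the estimates or convergence conditions from Proposition-Definition \ref{Def-H_N} re-enter here — they were already absorbed into the well-definedness of $\mathcal{F}$ and $\mathcal{G}$ — so the proof of Theorem \ref{Main-Ext-1} itself is short, amounting to the two computations above.
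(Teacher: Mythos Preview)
Your proposal is correct and follows essentially the same approach as the paper: both argue that $\mathcal{F}$ and $\mathcal{G}$ are mutually inverse, using the canonical section from Proposition \ref{Der-to-Ext-Prop} for $\mathcal{F}\circ\mathcal{G}$ and the explicit map $id\amalg\psi^{(1)}$ (your $\Theta$) for $\mathcal{G}\circ\mathcal{F}$. You actually supply more detail than the paper in verifying that $\Theta$ intertwines the vertex operators via Proposition \ref{pi-one-module}, whereas the paper simply declares this check ``straightforward.''
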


\begin{proof}
Recall the maps $\mathcal{F}$ and $\mathcal{G}$ from Corollary \ref{def-cal-F} and Corollary \ref{def-cal-G}. We first show that $\mathcal{F}\circ\mathcal{G} = Id$. Starting from a 1-cohomology class $F$, we have obtained an extension $U = \mathcal{G}(F)$ satisfying conditions in Definition \ref{Ext_N-def}. From Remark \ref{rmk-indep-section}, we see that the image $\mathcal{F}(U) = \mathcal{G}(\mathcal{F}(F))$ is independent of the choice of the section. If we use the section $\psi$ as described at the end of the proof of Proposition \ref{Der-to-Ext-Prop}, the derivation obtained in Proposition \ref{Ext-der-prop} is precisely $F$. So we conclude that  $\mathcal{F}(\mathcal{G}(F))=F$, i.e., $\mathcal{F}\circ\mathcal{G} = Id$. 

Now we show that $ \mathcal{G}\circ\mathcal{F} = Id$. Starting from an extension $U$ satisfying the condition in Definition \ref{Ext_N-def}, we obtained a 1-cohomology class corresponding to $F(v, x) = \pi_2 Y_U(v, x)\psi^{(1)}$. If we use this $F$ to construct the module $\mathcal{G}(\mathcal{F}(U))$, we see that 
$$Y_{\mathcal{G}(\mathcal{F}(U))}(v, x)(w_2, w_1) = (Y_{W_2}(v, x)w_2 + \pi_2 Y_{U}(v, x)\psi^{(1)} w_1, Y_{W_1}(v, x)w_1),$$
It is straightforward to check that $id\amalg \psi^{(1)}: \mathcal{G}(\mathcal{F}(U)) = W_2\amalg W_1 \to U = W_2\amalg \psi^{(1)}(W_1)$ sending $(w_2, w_1)$ to $(w_2, \psi^{(1)}w_1)$ is an isomorphism. So $\mathcal{G}(\mathcal{F}(U))$ represents the same element in $\Ext_N^1(W_1, W_2)$ as $U$ does. 
\end{proof}

\begin{rema}
The bimodule $\H_N(W_1, W_2)$ is the vertex algebraic analogue of the bimodule $\Hom_\C(M_1, M_2)$ associated with the two left modules $M_1, M_2$ for an associative algebra. What we have shown in Theorem \ref{Main-Ext-1} is just the analogue of the bijection 
$$\Ext^1(M_1, M_2) \simeq HH^1(A, \Hom_\C(M_1, M_2))$$
where $HH^1(A, \Hom_\C(M_1, M_2))$ is the first Hochschild cohomology with respect to the bimodule $\Hom_\C(M_1, M_2)$.
\end{rema}


\subsection{Some comments on reductivity}

\begin{thm}\label{Ext1-Thm}
Let $V$ be a MOSVA such that every left $V$-module is completely reducible, then for every left $V$-module $W_1$ and $W_2$, 
$$\widehat{H}^1(V, \H_N(W_1, W_2)) = 0$$
\end{thm}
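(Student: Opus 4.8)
The plan is to derive this as an immediate consequence of Theorem \ref{Main-Ext-1}. By that theorem, there is a bijection $\mathcal{F}: \Ext^1_N(W_1, W_2) \to \widehat{H}^1(V, \H_N(W_1, W_2))$, so it suffices to show that $\Ext^1_N(W_1, W_2)$ has exactly one element, namely the class of the split extension $W_2 \oplus W_1$. Since the split extension obviously belongs to $\Ext^1_N(W_1, W_2)$ (one checks the condition in Definition \ref{Ext_N-def} directly: with $\psi: W_1 \to W_2 \oplus W_1$ the canonical inclusion and $\pi_2$ the canonical projection, $\pi_2 Y_U(v,\zeta)\psi = 0 \in \H_N(W_1, W_2)$ trivially), the set $\Ext^1_N(W_1, W_2)$ is nonempty, and the only thing left to prove is that every element of it equals this one.

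So first I would take an arbitrary extension $U$ representing a class in $\Ext^1_N(W_1, W_2)$, with the exact sequence
$$\xymatrix{ 0 \ar[r] & W_2 \ar[r] & U \ar[r]^p & W_1 \ar[r] & 0.  }$$
By the complete reducibility hypothesis, $U$ decomposes as a direct sum of irreducibles, and since $W_2$ is a submodule, there is a $V$-module complement $W_2'$ with $U = W_2 \oplus W_2'$; the restriction $p|_{W_2'}: W_2' \to W_1$ is then a $V$-module isomorphism. Composing its inverse with the inclusion $W_2' \hookrightarrow U$ gives a $V$-module homomorphism $s: W_1 \to U$ with $p \circ s = \mathrm{id}_{W_1}$, i.e. the sequence splits in the category of $V$-modules. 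Then $U$ is equivalent (in the sense of diagram (\ref{CommDiag})) to the direct sum extension $W_2 \oplus W_1$: the map $T: W_2 \oplus W_1 \to U$, $(w_2, w_1) \mapsto w_2 + s(w_1)$ is a $V$-module isomorphism making the relevant diagram commute. Hence the class of $U$ in $\Ext^1_N(W_1, W_2)$ coincides with the class of $W_2 \oplus W_1$.

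Therefore $\Ext^1_N(W_1, W_2)$ is a singleton, and by Theorem \ref{Main-Ext-1} so is $\widehat{H}^1(V, \H_N(W_1, W_2))$; being a vector space, it must be $0$. I do not expect any genuine obstacle here: the only point requiring a small amount of care is confirming that the set-theoretic bijection $\mathcal{F}$ indeed sends the class of the split extension to the zero cohomology class — but this is clear from the construction, since for the split module $U = W_2 \oplus W_1$ with the canonical section $\psi$, the derivation $v \mapsto \pi_2 Y_U(v,\zeta)\psi$ is identically zero, hence represents $0 \in \widehat{H}^1(V, \H_N(W_1, W_2))$. One could alternatively phrase the whole argument purely on the cohomology side — given a $1$-cocycle $\Phi$, use Proposition \ref{Der-to-Ext-Prop} to build the extension, split it by complete reducibility, and then invoke Proposition \ref{equiv-ext-inner-prop} to conclude $\Phi$ is a coboundary — but routing everything through the already-established bijection $\mathcal{F}$ is the cleanest path.
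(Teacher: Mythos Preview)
Your proof is correct and follows essentially the same approach as the paper: both argue that complete reducibility forces every extension to be equivalent to the split one $W_2\oplus W_1$, for which the canonical section gives $\pi_2 Y_U(v,\zeta)\psi=0$, whence every 1-cocycle is a coboundary. The paper's proof is terser and leaves the invocation of the bijection from Theorem~\ref{Main-Ext-1} implicit, while you spell it out explicitly; otherwise the arguments coincide.
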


\begin{proof}
If every $V$-module is completely reducible, then the only possible extension of $W_1$ by $W_2$ is represented by the $V$-module direct sum $W_1\oplus W_2$. If we pick $\psi$ to be the embedding of $W_1$ to $W_1\oplus W_2$, then clearly $\pi_2 Y_{W_1\oplus W_2}(v, x)\psi = 0$. Thus every 1-cocycle is cohomologous to 0. 
\end{proof}

\begin{rema}
In \cite{HQ-Red}, we studied the left $V$-module $W$ satisfying some technical but natural convergence conditions. The conditions can be reformulated as: for every $V$-submodule $W_2$ of $U$, we require the existence of a complementary subspace $W_1$, such that the projection operators $\pi_{W_2}: U\to W_2$ and $\pi_{W_1}: U\to W_1$ makes $\pi_{W_2} Y_U(v, z)\pi_{W_1} \in \H_N(W_1, W_2)$ (here $W_1$ is a $V$-module with the vertex operator $\pi_{W_1} Y_{W_1})$. We proved that such a left $V$-module $U$ is completely reducible if for every $V$-bimodule, the first cohomology is given by the zero-mode derivations. It should be remarked here that proof in \cite{HQ-Red} used only the bimodule $\H_N(W_1, W_2)$ and the cohomology $\widehat{H}^1(V, \H_N(W_1, W_2))$. Therefore, Theorem \ref{Ext1-Thm} can be viewed as a converse of the result in \cite{HQ-Red}. 
\end{rema}

\begin{rema}
In the early version of \cite{HQ-Red} and in the author's PhD thesis \cite{Q-Thesis}, the reductivity was proved with the assumption that $\widehat{H}^1(V, W) = 0$ for every $V$-bimodule $W$. Theorem \ref{Ext1-Thm} explains why we ignored the zero-mode derivations: $\H_N(W_1, W_2)$ does not have nontrivial zero-mode derivations. Indeed, one can directly show that every zero-mode derivation $V\to \H_N(W_1, W_2)$ is an inner derivation. We decide not to include the details here as it is too technical and much less conceptual than what is shown here. 
\end{rema}

\section{Category $\mathcal{C}_N$}

In this section we study the category $\mathcal{C}_N$ consisting of modules $W$ such that for every $V$-submodule $W_2$ of $U$, there exists a complementary subspace $W_1$, such that the projection operators $\pi_{W_2}: U\to W_2$ and $\pi_{W_1}: U\to W_1$ makes $\pi_{W_2} Y_U(v, z)\pi_{W_1} \in \H_N(W_1, W_2)$. Based on the results and remarks from Section \ref{Section-3} and from the paper \cite{Q-Ext-n}, we believe that $\mathcal{C}_N$ is the correct category to develop the homological methods of vertex algebras. In case $V$ is a grading-restricted vertex algebra containing a nice subalgebra, we show that every $V$-module is in $\mathcal{C}_N$, and the dimension of the space of extensions is finite.

\subsection{Category $\mathcal{C}_N$ and some general facts} 
\begin{defn}\label{composibility}
Let $\mathcal{C}_N$ be the category whose objects are left $V$-modules $W$ such that for every $V$-submodule $W_2\subseteq W$, there exists a vector subspace $W_1\subseteq W$ satisfying the following conditions:
\begin{enumerate}
    \item $W = W_1 \amalg W_2$ as vector spaces with projection operator $\pi_{W_1}: W \to W_1, \pi_{W_2}: W \to W_2$.
    \item For every $v\in V$, $\pi_{W_2} Y_W(v, x) \pi_{W_1} \in \H_N(W_1, W_2)$. 
\end{enumerate}
Here $W_1$ is regarded as $V$-module with the vertex operator $v\mapsto \pi_{W_1}Y_W(v, x)$. Morphisms of $\mathcal{C}_N$ are the left $V$-module homomorphisms. 
\end{defn}

\begin{rema}
If $W_1$ is a complement of $W_2$, i.e., the vector space $W_1$ is also a submodule of $W$, then for every $v\in V$, $\pi_{W_2} Y_{W}(v,x)\pi_{W_1} = 0$. The condition (2) holds trivially. Consequently, if $V$ is a strongly rational vertex operator algebra, then for every $N\in \Z$, $\CC_N$ coincides with the category of $V$-modules. 
\end{rema}

\begin{rema}\label{composibility-formal}
The composability and $N$-weight-degree conditions in Proposition-Definition \ref{Def-H_N} can also be formulated in terms of formal variables. 
\begin{enumerate}
    \setcounter{enumi}{2}
    \item The composability condition: For every $k,l\in \N$, every $u_1, ..., u_{k+l}\in V$, there exists $p\in \N, p_i, q_i\in \N (i=1, ..., k+l), q_{ij}\in \N (1\leq i < j \leq k+l)$, and a formal power series $g(x_1, ..., x_k, x, x_{k+1},..., x_{k+l})\in W_2[[x_1, ..., x_k, x, x_{k+1}, ..., x_{k+l}]]$, such that 
    \begin{align*}
        & Y_{W_2}(u_1, x_1)\cdots Y_{W_2}(u_k, x_k)\phi(x) \pi_{W_1}Y_W(u_{k+1}, x_{k+1})\cdots \pi_{W_1}Y_W(x_{k+l}, x_{k+l})w_1
        \\
        & = \frac{g(x_1, ..., x_k, x, x_{k+1},.., x_{k+l})}{x^p \prod\limits_{i=1}^{k+l} x_i^{q_i} \prod\limits_{i=1}^{k}(x_i-x)^{p_i} \prod\limits_{i=k+1}^{k+l}(x-x_i)^{p_i} \prod\limits_{1\leq i < j \leq k+l}(x_i-x_j)^{q_{ij}}}
    \end{align*}
    where the right-hand-side series is in $W_2[[x_1, x_1^{-1}, ..., x_k, x_k^{-1}, x, x^{-1}, x_{k+1}, x_{k+1}^{-1}, ..., x_{k+l}, x_{k+l}^{-1}]]$ with all negative powers of binomials expanded as power series of the second variable in accordance with the binomial expansion convention. 
\item $N$-weight-degree condition: With same notations as in (3), the series 
\begin{align*}
    \frac{g(x_1+x_{k+l}, ..., x_k+x_{k+l}, x+x_{k+l}, x_{k+1}, ..., x_{k+l-1}+x_{k+l}, x_{k+l})}{(x_{k+l}+x)^p\prod\limits_{i=1}^{k+l}(x_{k+l}+x_i)^{q_i}\prod\limits_{i=1}^{k}(x_i-x)^{p_i}\prod\limits_{i={k+1}}^{k+l}(x- x_i)^{p_i}\prod\limits_{1\leq i < j \leq k+l-1}(x_i-x_j)^{q_{ij}} \prod\limits_{i=1}^{k+l-1}(x_{k+l}-x_i)^{q_{i,k+l}}}
\end{align*}
in $W_2[[x_1, x_1^{-1}, ..., x_k, x_k^{-1}, x, x^{-1}, x_{k+1}, x_{k+1}^{-1}, ..., x_{k+l}, x_{k+l}^{-1}]]$, viewed as a series in \\
$(W_2[[x_{k+l}, x_{k+l}^{-1}]])[[x_1, x_1^{-1}, ..., x_k, x_k^{-1}, x, x^{-1}, x_{k+1}, x_{k+1}^{-1}, ..., x_{k+l-1}, x_{k+l-1}^{-1}]]$, has total degree at least as large as $N-\wt(u_1)-\cdots -\wt(u_{k+1})-\wt(v)$. 
\end{enumerate}
\end{rema}

\begin{prop}\label{submodule-quotient}
The category $\mathcal{C}_N$ is closed under the operation of taking submodules and quotients. 
\end{prop}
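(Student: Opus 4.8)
The plan is to verify the two closure properties separately, in each case exhibiting the required complementary subspace $W_1$ by pulling back or restricting the one furnished by the ambient module.

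First I would treat submodules. Let $W\in\CC_N$ and let $W'\subseteq W$ be a $V$-submodule; I must show $W'\in\CC_N$. Given any $V$-submodule $W_2\subseteq W'$, it is in particular a submodule of $W$, so there is a subspace $W_1\subseteq W$ with $W=W_1\amalg W_2$ and $\pi_{W_2}Y_W(v,x)\pi_{W_1}\in\H_N(W_1,W_2)$. The natural candidate inside $W'$ is $W_1':=W_1\cap W'$. I would first check the linear-algebra fact $W'=W_1'\amalg W_2$: the inclusion $W_2\subseteq W'$ and $W_2\subseteq W'$ give $W_1'+W_2\subseteq W'$, while for $w'\in W'$ one writes $w'=a+b$ with $a\in W_1,\ b\in W_2$, notes $b\in W_2\subseteq W'$, hence $a=w'-b\in W'\cap W_1=W_1'$; the intersection $W_1'\cap W_2\subseteq W_1\cap W_2=0$. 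Consequently the projections for the decomposition of $W'$ are the restrictions $\pi_{W_2}|_{W'}$ and $\pi_{W_1}|_{W'}$ (with image in $W_1'$). Since $W'$ is a submodule, $Y_{W'}(v,x)=Y_W(v,x)|_{W'}$, so $\pi_{W_2}Y_{W'}(v,x)\pi_{W_1'}$ is just the restriction to $W'$ of $\pi_{W_2}Y_W(v,x)\pi_{W_1}$ as a map $W_1'\to\widehat{(W_2)}_\zeta$. Here I would invoke Proposition~\ref{submodule-quotient}'s analogue at the level of $\H_N$: the defining conditions (1)--(4) of Proposition-Definition~\ref{Def-H_N} are conditions on the matrix coefficients $\langle w_2',\,\cdots\, w_1\rangle$ for $w_1$ ranging over the source module, and restricting the source from $W_1$ to the subspace $W_1'$ can only delete such coefficients, never create new obstructions; the pole orders, the $N$-weight-degree bound, the $D$- and $\d$-properties all persist. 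Hence $\pi_{W_2}Y_{W'}(v,x)\pi_{W_1'}\in\H_N(W_1',W_2)$, proving $W'\in\CC_N$.

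Next I would treat quotients. Let $W\in\CC_N$ and let $q\colon W\to W/K$ be the quotient by a $V$-submodule $K$; write $\overline{W}=W/K$ (not to be confused with the algebraic completion — I would rename to avoid clash, say $\widetilde W := W/K$). Given a submodule $\overline{W_2}\subseteq\widetilde W$, let $W_2:=q^{-1}(\overline{W_2})$, a submodule of $W$ containing $K$. By hypothesis there is $W_1\subseteq W$ with $W=W_1\amalg W_2$ and $\pi_{W_2}Y_W(v,x)\pi_{W_1}\in\H_N(W_1,W_2)$. I claim $\overline{W_1}:=q(W_1)$ works. First, since $K\subseteq W_2$, the restriction $q|_{W_1}\colon W_1\to q(W_1)$ is injective (its kernel is $W_1\cap K\subseteq W_1\cap W_2=0$), and $\widetilde W=q(W)=q(W_1)+q(W_2)=\overline{W_1}\amalg\overline{W_2}$ with the sum direct because $q(W_1)\cap q(W_2)=q(W_1\cap(W_2+K))=q(W_1\cap W_2)=0$ (using $K\subseteq W_2$). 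Thus $q|_{W_1}$ is a linear isomorphism $W_1\xrightarrow{\sim}\overline{W_1}$, and it intertwines $\pi_{W_1}Y_W(v,x)$ with $\pi_{\overline{W_1}}Y_{\widetilde W}(v,x)$; likewise $q$ induces a surjection $W_2\to\overline{W_2}$ intertwining the left/right actions used to define $\H_N$. The composite map
$$
\pi_{\overline{W_2}}\,Y_{\widetilde W}(v,x)\,\pi_{\overline{W_1}}
$$
is then obtained from $\pi_{W_2}Y_W(v,x)\pi_{W_1}\in\H_N(W_1,W_2)$ by precomposing with $(q|_{W_1})^{-1}$ and postcomposing with the surjection $W_2\twoheadrightarrow\overline{W_2}$ (together with the induced map $\widehat{(W_2)}_\zeta\to\widehat{(\overline{W_2})}_\zeta$). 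Since $\H_N(W_1,W_2)$ is functorial in its arguments in the appropriate variance — a quotient map $W_2\to\overline{W_2}$ of left modules induces $\H_N(W_1,W_2)\to\H_N(W_1,\overline{W_2})$, because it carries the defining matrix-coefficient conditions forward — the image lies in $\H_N(\overline{W_1},\overline{W_2})$, so $\widetilde W\in\CC_N$.

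The main obstacle is verifying that the defining conditions of $\H_N$ are genuinely preserved under restricting the source module $W_1$ to a subspace and under pushing the target module $W_2$ to a quotient. The $D$-derivative and $\d$-conjugation properties are immediate, but the composability condition (3) and the $N$-weight-degree condition (4) assert the existence of integers $r_i, m, p_{ij}, s_i$ and a \emph{formal power series} $g$ with $g$ depending on the chosen vectors; one must check that the restricted/pushed-forward map admits such data — for restriction of the source this is clear since the required series is just the old one evaluated at the smaller set of $w_1$'s, and for a quotient of the target one applies $q$ to the coefficients of $g$, which preserves both the pole-order bounds and the total-degree lower bound in condition (4) since these are stated coefficient-wise. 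I would spell out this functoriality as a short standalone observation before assembling the two halves of the proof.
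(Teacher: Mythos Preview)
Your proposal is correct and follows essentially the same route as the paper: for submodules you intersect the ambient complement with the submodule ($W_1':=W_1\cap W'$), and for quotients you pull back the given submodule to $W_2=q^{-1}(\overline{W_2})$, take the ambient complement $W_1$, and push it forward to $\overline{W_1}=q(W_1)$. The paper verifies the composability and $N$-weight-degree conditions for the quotient case by invoking the formal-variable reformulation of Remark~\ref{composibility-formal} rather than the analytic matrix-coefficient language you use, but this is only a difference in packaging; your observation that $(\overline{W_2})'\hookrightarrow W_2'$ and that $q|_{W_1}$ intertwines the two $\pi_{W_1}$-twisted module structures is exactly what makes either formulation go through.
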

\begin{proof}
Let $T\subseteq W$ be a submodule of a left $V$-module $W\in \mathcal{C}_N$. Fix any $V$-submodule $T_2$ of $T$. Then $T_2$ is also a left $V$-submodule of $W$. Since $W\in \CC_N$, there exists a vector subspace $W_1$, such that $W=W_1 \amalg T_2$, and for every $v\in V$, the corresponding projection operator $\pi_{T_2}$ and $\pi_{W_1}$ makes the map $\pi_{T_2} Y_W(v, x)\pi_{W_1}$ infinitely composable with $Y_{T_2}$ and $\pi_{W_1}Y_W$ and satisfy the $N$-weight-degree condition. Let $T_1 = W_1 \cap T$. Then $T = T_1 \amalg T_2$. Moreover, for every $v\in V$, 
$$\pi_{T_2}Y_T(v, x) \pi_{T_1} = \pi_{T_2}Y_W(v, x)\pi_{W_1} | _{T_1}, $$
and 
$$\pi_{T_1}Y_T(v, x) = \pi_{W_1} Y_W(v, x)|_{T_1}.$$
Thus for every $k, l\in \N$, every $u_1, ..., u_{k+l}\in V$, every $t_2'\in W_2', t_1\in T_1$, 
\begin{align*}
        & \langle t_2', Y_{W_2}(u_1, z_1)\cdots Y_{W_2}(u_k, z_k)\pi_{W_2}Y_W(v,z) \pi_{W_1} Y_W(u_{k+1}, z_{k+1})\cdots \pi_{W_1}Y_W(u_{k+l}, z_{k+l})t_1\rangle\\
        & \quad = \langle t_2', Y_{T_2}(u_1, z_1)\cdots Y_{T_2}(u_k, z_k)\pi_{T_2}Y_T(v,z) \pi_{T_1} Y_T(u_{k+1}, z_{k+1})\cdots \pi_{T_1}Y_T(u_{k+l}, z_{k+l})t_1\rangle
\end{align*}
Since the left-hand-side series converges absolutely to a rational function satisfying the $N$-weight-degree condition, so is the right-hand-side series. Thus we verified that for every fixed $V$-submodule $T_2$ of $T$, there exists a vector subspace $T_1$ of $T$ such that $T=T_1\amalg T_2$, and for every $v\in V$, the corresponding projection operators $\pi_{T_2}$ and $\pi_{T_1}$ makes the map $\pi_{T_2}Y_T(v, x)\pi_{T_1}$ satisfy the composability and the $N$-weight-degree conditions. Thus $T\in \CC_N$. 

Let $T\subseteq W$ be a left $V$-submodule and consider the quotient module $W/T$. For every $v\in V$, the vertex operator on $W/T$ is given naturally by 
$$Y_{W/T}(v, x)(w+T) = Y_W(v, x)w + T.$$
Fix any submodule $W_2/T \subseteq W/T$. Then $W_2$ is a submodule of $W$. Since $W\in \CC_N$, there exists a vector subspace $W_1$ of $W$ such that $W = W_2\amalg W_1$, and for every $v\in V$, the corresponding projection operators $\pi_{W_2}$ and $\pi_{W_1}$ makes the map $\pi_{W_2}Y_{W}(v, x) \pi_{W_1}$ satisfy the composability (with $Y_{W_2}$ and $\pi_{W_1}Y_W$) and the $N$-weight-degree conditions. We use the vector space $(W_1+T)/T$, so that $W/T = (W_1+T)/T \amalg W_2/T$. Clearly, for every $w\in W$, from $T \cap W_1  = 0$, we have
$$\pi_{(W_1+T)/T}(w+T) = \pi_{W_1}w + T$$
Therefore, for every $v\in V$ and $w\in W$, the corresponding projection operators $\pi_{(W_1+T)/T}$ and $\pi_{W_2/T}$ satisfy
$$\pi_{W_2/T}Y_{W/T}(v, x) \pi_{(W_1+T)/T}(w+T)= \pi_{W_2}Y_{W}(v, x)\pi_{W_1}w + T,$$
and 
$$\pi_{(W_1+T)/T}Y_{W/T}(v, x)(w+T) = \pi_{W_1}Y_W(v,x)w + T. $$
These identifications allow us to pass the Conditions (3) and (4) in Remark \ref{composibility-formal} satisfied by the map $\pi_{W_2}Y_W(v,x)\pi_{W_1}$ from $W_2$ to $W_2/T$. Then, we see that the map $\pi_{W_2/T}Y_{W/T}(v, x) \pi_{W_1+T}$ also satisfies Conditions (3) and (4). Thus we proved that for every submodule $W_2/T$ of $W/T$, there exists a vector subspace $(W_1+T)/T$ of $W/T$, such that $W/T = (W_1+T)/T \oplus W_2/T$, and corresponding projections makes the map $\pi_{W_2/T}Y_{W/T}(v, x) \pi_{W_1+T}$ infinitely composable and satisfy the $N$-weight-degree condition. So $W/T\in \CC_N$. 
\end{proof}

\begin{rema}
In general, we cannot prove that $\mathcal{C}_N$ is closed under direct sums. Thus $\mathcal{C}_N$ generally does not form an abelian category. However, we will see in Section \ref{s-4-3} that if $V$ is a grading-restricted meromorphic open-string vertex algebra containing a nice vertex subalgebra, then $\CC_N$ coincides with the category of grading-restricted (generalized) $V$-modules. 
\end{rema}

\begin{rema}
Even if $V$ does not contain a nice subalgebra, it is still possible that certain $V$-modules form an abelian category whose objects are in $\CC_N$. In Section \ref{Section-5}, we will illustrate such an example using certain modules for the Virasoro VOA. 
\end{rema}

\subsection{When $V$ contains a nice subalgebra $V_0$}\label{s-4-3}

Let $V$ be a grading-restricted meromorphic open-string vertex algebra and $V_0$ be a vertex subalgebra of $V$. Note that the grading operator $\d_{V_0}$ and the operator $D_{V_0}$ for the vertex subalgebra $V_0$ of $V$ are induced from those for $V$. In particular, when $V$ is a vertex operator algebra such that $\d_V = L_V (0)$ and $D_V = L_V (-1)$ and $V_0$ is a vertex operator subalgebra with a different conformal vector, we insist that $\d_{V_0} = L_{V} (0)|_{V_0}$ and $D_{V_0} = L_V (-1)|_{V_0}$. In general, they may be different from $L_{V_0}(0)$ and $L_{V_0}(-1)$, respectively.

We start by interpreting Proposition 6.3, Theorem 6.4 in \cite{HQ-Red} to the following form. 

\begin{thm}\label{Huang-thm}
Let $V$ be a grading-restricted meromorphic open-string vertex algebra. Let $V_0$ be a vertex subalgebra of $V$ satisfying the following conditions: 
\begin{enumerate}
\item Every grading-restricted left $V_{0}$-module (or generalized $V_{0}$-module satisfying the grading-restriction conditions 
in the terminology in \cite{HLZ2}) is completely reducible. 
\item For any $n\in \Z_{+}$, products of $n$ intertwining operators among grading-restricted left $V_{0}$-modules evaluated at $z_{1}, \dots, z_{n}$ are absolutely convergent in the region $|z_{1}|>\cdots >|z_{n}|>0$ and can be analytically extended to a (possibly multivalued) analytic function in $z_{1}, \dots, z_{n}$ with the only possible singularities (branch points or poles) $z_{i}=0$ for $i=1, \dots, n$ and $z_{i}=z_{j}$ for $i, j=1, \dots, n$, $i\ne j$. 
\item The associativity of intertwining operators among grading-restricted left $V_{0}$-modules holds. 
\end{enumerate}
Let $U$ be an extension of $W_1$ by $W_2$. Then $U$ is in $\mathcal{C}_N$ for some $N$ determined by the associativity property of $V_0$-intertwining operators. \end{thm}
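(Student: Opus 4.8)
The plan is to restrict the $V$-module structure of $U$ along the inclusion $V_0\hookrightarrow V$, where Conditions (1)--(3) supply semisimplicity, convergence and associativity for $V_0$-intertwining operators, and then to translate these facts into the defining properties of $\H_N(W_1,W_2)$ listed in Proposition-Definition \ref{Def-H_N}. First I would observe that $U$, being an extension of grading-restricted $V$-modules, is grading-restricted, and that (since $\d_{V_0}$ and $D_{V_0}$ are by convention the restrictions of $\d_V$ and $D_V$) its restriction to $V_0$ is a grading-restricted $V_0$-module; by Condition (1) it decomposes as a $V_0$-module direct sum $U=W_2\oplus\widetilde{W_1}$ with $p$ restricting to an isomorphism $\widetilde{W_1}\xrightarrow{\sim}W_1$. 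This $\widetilde{W_1}$ serves as the vector subspace demanded in Definition \ref{composibility}, the projections $\pi_{W_2},\pi_{\widetilde{W_1}}$ are $V_0$-homomorphisms (in particular grading-preserving and commuting with $D_U$), and by Proposition \ref{pi-one-module} the quadruple $(\widetilde{W_1},\pi_{\widetilde{W_1}}Y_U,\d_U|_{\widetilde{W_1}},\pi_{\widetilde{W_1}}D_U|_{\widetilde{W_1}})$ is a left $V$-module isomorphic to $W_1$. The same construction applied to an arbitrary $V$-submodule $T_2\subseteq U$ reduces the claim $U\in\CC_N$ to producing, for this one splitting, an $N$ with $\pi_{W_2}Y_U(v,\zeta)\pi_{\widetilde{W_1}}\in\H_N(W_1,W_2)$.

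Next I would identify the operators that occur. Since $V_0$ is a subalgebra, $V$ is itself a grading-restricted $V_0$-module, and for each $v\in V$ the series $Y_U(v,\zeta)$, read with respect to the $V_0$-module structures, is a $V_0$-intertwining operator of type $\binom{U}{VU}$. Composing with $V_0$-projections, $\phi(v,\zeta):=\pi_{W_2}Y_U(v,\zeta)\pi_{\widetilde{W_1}}$ is then a $V_0$-intertwining operator of type $\binom{W_2}{V\widetilde{W_1}}$, while $Y_{W_2}(v,\zeta)=Y_U(v,\zeta)|_{W_2}$ and $Y_{\widetilde{W_1}}(v,\zeta)=\pi_{\widetilde{W_1}}Y_U(v,\zeta)|_{\widetilde{W_1}}=Y_U(v,\zeta)|_{\widetilde{W_1}}-\phi(v,\zeta)$ are $V_0$-intertwining operators of types $\binom{W_2}{VW_2}$ and $\binom{\widetilde{W_1}}{V\widetilde{W_1}}$. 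The $D$-derivative property and the $\d$-conjugation property (Conditions (1)--(2) of Proposition-Definition \ref{Def-H_N}) for $\phi(v,\cdot)$ then follow immediately from the $D$- and $\d$-commutator formulas for $Y_U$ and the $V_0$-equivariance of $\pi_{W_2},\pi_{\widetilde{W_1}}$; by weight considerations each $\langle w_2',\phi(v,\zeta)w_1\rangle$ is a single monomial in $\zeta$, so $\phi(v,\zeta)$ indeed takes values in $\widehat{(W_2)}_\zeta$.

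Then I would verify the composability and $N$-weight-degree conditions. Expanding each $\pi_{\widetilde{W_1}}=1-\pi_{W_2}$ rewrites the series (\ref{Def-H_N-series}), via the identifications above, as a finite linear combination of matrix coefficients of iterated products of $V_0$-intertwining operators among the (grading-restricted) summands of $U$. Condition (2) yields absolute convergence in the stated region and locates the possible singularities; since every operator involved is a genuine $V$-module vertex operator (integer powers of the formal variables) composed with grading-preserving projections on finite-dimensional weight spaces, the analytic continuation is in fact rational with poles only at $z_i=0$, $\zeta=0$, $z_i=z_j$ and $z_i=\zeta$ (using the weak-associativity-with-pole-order axioms and the known rationality of products of module vertex operators). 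Finally Condition (3), associativity of $V_0$-intertwining operators, provides the isomorphisms needed to expand about $z_{k+l}$ and to bound from below the total degree of the resulting monomials; this bound is controlled by the lowest $\d$-weights of the finitely many $V_0$-modules entering the iterated fusion, and setting $N$ equal to this uniform lower bound --- the quantity ``determined by the associativity property of $V_0$-intertwining operators'' --- gives precisely the $N$-weight-degree condition. Hence $\phi(v,\zeta)\in\H_N(W_1,W_2)$ for all $v$, and since the whole argument is uniform in the chosen $V$-submodule, $U\in\CC_N$.

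The hard part is this last step. One must genuinely carry out the expansion of $Y_{W_2}(v_1,z_1)\cdots\phi(\zeta)\cdots Y_{\widetilde{W_1}}(v_{k+l},z_{k+l})w_1$ into products of $V_0$-intertwining operators while tracking the correction terms from $Y_{\widetilde{W_1}}=Y_U|_{\widetilde{W_1}}-\phi$, and then extract a single $N$ together with pole-order bounds $r_i,p_{ij},s_i$ that are valid simultaneously for every $k,l$ and every $v_1,\dots,v_{k+l}\in V$. That uniformity is exactly where the niceness of $V_0$ (finitely many irreducibles, bounded fusion, explicit associativity data) is indispensable; it is the content of Proposition 6.3 and Theorem 6.4 of \cite{HQ-Red}, which the present theorem repackages.
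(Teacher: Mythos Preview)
Your proposal is correct and takes essentially the same approach as the paper: the paper's own proof is just a pointer to Proposition~6.3 and Theorem~6.4 of \cite{HQ-Red} (plus the remark that in the MOSVA case a $V$-module is automatically a $V_0$-module via weak associativity, cf.\ \cite{LL}, Theorem~4.4.5), and your sketch is precisely an outline of what those cited results establish --- as you yourself note in your final paragraph.
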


\begin{proof}
When $V$ is a grading-restricted vertex algebra, the proofs are given to Proposition 6.3 and Theorem 6.4 in \cite{HQ-Red}. We shall not repeat here. When $V$ is a grading-restricted meromorphic open-string vertex algebra, it suffices to notice that every $V$-module is automatically a $V_0$-module (since weak associativity holds, see \cite{LL}, Theorem 4.4.5). The proofs are basically identical to Proposition 6.3 and Theorem 6.4 in \cite{HQ-Red}. 
\end{proof}

\begin{rema}
The proof of Proposition 6.3 and Theorem 6.4 in \cite{HQ-Red} was discovered by Yi-Zhi Huang. The proof uses the theory of intertwining operators and the tensor category theory for module categories for grading-restricted vertex algebras satisfying suitable conditions. It hints deep connections between the cohomology theory and the theory of intertwining operators or the tensor category theory.
\end{rema}

\begin{rema}
It can also be seen from the proof that if $N$ is a number that is smaller than the lowest weight of every $V_0$-module, then every $V$-module is in $\CC_N$. 
\end{rema}

Recall that the fusion rule $N\binom{W_2}{VW_1}$ is the dimension of the space of $V_0$-intertwining operators of type $\binom{W_2}{VW_1}$. 

\begin{thm}\label{Ext-finite}
Let $V$ be a grading-restricted meromorphic open-string vertex algebra satisfying the conditions in Theorem \ref{Huang-thm}. Let $W_1, W_2$ be grading-restricted $V$-modules. Then $\dim_\C \Ext^1(W_1, W_2) \leq N\binom{W_2}{VW_1}$. In particular, if the fusion rule $N\binom{W_2}{VW_1}$ is finite, then $\Ext^1(W_1, W_2)$ is finite-dimensional. 
\end{thm}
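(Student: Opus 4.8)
The plan is to combine Theorem \ref{Main-Ext-1} with Theorem \ref{Huang-thm} and then push the cohomology class into the space of $V_0$-intertwining operators. First I would invoke Theorem \ref{Huang-thm} to fix a single $N$ (determined by the associativity property of $V_0$-intertwining operators) such that every extension $U$ of $W_1$ by $W_2$ lies in $\CC_N$; this means $\Ext^1(W_1, W_2)$ in the usual sense coincides with $\Ext_N^1(W_1, W_2)$, since any extension automatically satisfies the conditions of Definition \ref{Ext_N-def}. Theorem \ref{Main-Ext-1} then gives a linear bijection $\Ext^1(W_1, W_2) \cong \widehat{H}^1(V, \H_N(W_1, W_2))$, so it suffices to bound $\dim_\C \widehat{H}^1(V, \H_N(W_1, W_2))$ by the fusion rule $N\binom{W_2}{VW_1}$.

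Next I would construct a linear map from the space of $1$-cocycles $Z^1(V, \H_N(W_1, W_2))$ to the space of $V_0$-intertwining operators of type $\binom{W_2}{VW_1}$, and show it descends to an injection on cohomology. Given a $1$-cocycle $\Phi$, set $\phi(v, \zeta) = (\Phi(v;0))(\zeta)$ as in Proposition \ref{der-weak-assoc-conv}; the idea is that the assignment $(v, w_1) \mapsto \phi(v, \zeta)w_1$, viewed as a map $V \otimes W_1 \to \overline{W_2}$ depending on the formal variable $\zeta$, behaves like an intertwining operator of type $\binom{W_2}{VW_1}$ for the $V_0$-action. Specifically, Proposition \ref{der-weak-assoc} supplies exactly the weak-associativity/Jacobi-type identity with $Y_{W_2}$ on the left and $Y_{W_1}$ on the right (restricted to $V_0 \subseteq V$), the $D$-derivative property in Proposition-Definition \ref{Def-H_N} (1) gives the $L(-1)$-derivative property, and the $\d$-conjugation property gives the grading compatibility; the lower-truncation in the intertwining operator expansion comes from the pole-order bounds built into the definition of $\H_N(W_1, W_2)$ and from the $N$-weight-degree condition, which controls which powers of $\zeta$ can appear. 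So each $1$-cocycle yields an element of the space $\mathcal{V}_{W_1 V}^{W_2}$ of $V_0$-intertwining operators (or rather a suitable finite-dimensional space closely related to it).

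I would then check that $1$-coboundaries map to zero, or more precisely that the induced map on cohomology is injective: by Proposition \ref{inner-der}, a $1$-coboundary has the form $\phi(v, \zeta) = Y_{W_2}(v,\zeta)\phi_{-1} - \phi_{-1} Y_{W_1}(v, \zeta)$ for a linear map $\phi_{-1}: W_1 \to W_2$, which is precisely the "trivial" intertwining operator built from a would-be module map; and conversely if $\phi(v,\zeta)$ arises as such a difference then $\Phi$ is cohomologous to zero. Thus the map $\widehat{H}^1(V, \H_N(W_1, W_2)) \to (\text{space of }V_0\text{-intertwining operators of type }\binom{W_2}{VW_1})/(\text{trivial ones})$ is injective, and composing with the bijection of Theorem \ref{Main-Ext-1} embeds $\Ext^1(W_1, W_2)$ into a quotient of the space of such intertwining operators. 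Since a quotient of a space of dimension $N\binom{W_2}{VW_1}$ has dimension at most $N\binom{W_2}{VW_1}$, the bound follows, and finiteness of the fusion rule gives finite-dimensionality of $\Ext^1(W_1, W_2)$.

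The main obstacle I anticipate is verifying carefully that the data $\phi(v,\zeta)w_1$ really does define an honest $V_0$-intertwining operator — in particular matching the convergence/analytic-extension and single-valuedness properties required of intertwining operators (which live in $z^h \C((z))$-type spaces) against the more rigid rational-function framework of $\H_N(W_1, W_2)$, and making sure the restriction from $V$ to $V_0$ interacts correctly with the bimodule structure. One must also be careful that the target is genuinely the fusion space $N\binom{W_2}{VW_1}$ (not something larger), which is where the $N$-weight-degree condition and the grading-restriction hypotheses on $W_1, W_2$ and on $V_0$-modules are essential; this is presumably handled exactly as in the argument behind Proposition 6.3 and Theorem 6.4 of \cite{HQ-Red}.
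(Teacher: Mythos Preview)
Your overall strategy is sound, but there is a genuine gap in the step where you claim that an arbitrary $1$-cocycle $\Phi$ yields a $V_0$-intertwining operator via $\phi(v,\zeta)=(\Phi(v;0))(\zeta)$. Proposition \ref{der-weak-assoc} does \emph{not} supply the Jacobi/weak-associativity identity for an intertwining operator: it gives the \emph{derivation} identity
\[
(z_1+\zeta)^p\phi(Y(v_1,z_1)v_2,\zeta)w_1 = (z_1+\zeta)^p Y_{W_2}(v_1,z_1+\zeta)\phi(v_2,\zeta)w_1 + (z_1+\zeta)^p\phi(v_1,\zeta+z_1)Y_{W_1}(v_2,\zeta)w_1,
\]
which has $\phi$ appearing in \emph{two} places on the right. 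An intertwining operator of type $\binom{W_2}{VW_1}$ must satisfy, for $v_1\in V_0$, the single-term associativity $\mathcal{Y}(Y(v_1,z_0)v_2,\zeta)\sim Y_{W_2}(v_1,z_0+\zeta)\mathcal{Y}(v_2,\zeta)$ together with the commutator formula with $Y_{W_1}$; a general derivation simply does not satisfy these, so your map $Z^1\to\mathcal{I}$ is not defined on all cocycles.

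The paper circumvents this by never trying to turn an arbitrary cocycle into an intertwining operator. Instead it works directly at the level of extensions: given $U$, it uses complete reducibility of $V_0$-modules to choose the section $\psi_1:W_1\to U$ to be a $V_0$-module map, so that $\pi_2$ is also a $V_0$-homomorphism. Then $\pi_2 Y_U(\,\cdot\,,\zeta)\psi_1$ is automatically a $V_0$-intertwining operator of type $\binom{W_2}{VW_1}$, being the composite of the intertwining operator $Y_U$ of type $\binom{U}{VU}$ with $V_0$-module maps on each side. The map $\Ext^1(W_1,W_2)\to \mathcal{I}/\mathcal{I}_0$ is then shown to be injective using Theorem \ref{Main-Ext-1}. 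The content you are missing is precisely this choice of a $V_0$-equivariant splitting; without it, only one cocycle \emph{representative} in each cohomology class (namely the one coming from that special section) is an intertwining operator, and you cannot read this off from Proposition \ref{der-weak-assoc} alone.
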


\begin{proof}
Let $U\in \Ext^1(W_1, W_2)$. Then $U$ is a grading restricted $V_0$-module. From the assumption, 
$$0 \to W_2 \to U \to W_1 \to 0$$
splits as $V_0$-modules. We pick $\psi_1: W_1 \to U$ to be the splitting $V_0$-homomorphism. Thus $\pi_2: U \to W_2$ is also a $V_0$-homomorphism. Consider the 1-cohomology class given by the map $F: v\mapsto \pi_2 Y_U(v, \zeta)\psi_1$ in $H^1(V, \H_N(W_1, W_2))$. From Remark \ref{rmk-indep-section}, any other choices of $\psi_1$ results in the same cohomology class. 

Consider now the space $\mathcal{I}$ of $V_0$-intertwining operators of type $\binom{W_2}{VW_1}$. Let $\mathcal{I}_0$ be the subspace of $I$ consisting of $V_0$-intertwining operators of the form 
$$v\otimes w_1 \mapsto Y_{W_2}(v, \zeta) \psi w_1 - \psi Y_{W_1}(v, \zeta) w_1, \psi\in \Hom_{V_0}(W_1, W_2) $$
(the element is zero only when $v\in V_0$). Let $I = \mathcal{I}/\mathcal{I}_0$. Then $\dim_\C I \leq N\binom{W_2}{VW_1}$. Since both $\pi_2$ and $\psi_1$ are $V_0$-homomorphisms while $Y_U(v, \zeta)$ is an intertwining operator of type $\binom{U}{VU}$, the map 
$$\mathcal{Y}: v\otimes w_1 \mapsto \pi_2 Y_U(v, x)\psi_1w_1$$
is then an intertwining operator of type $\binom{W_2}{VW_1}$. 

Consider now the map $\Ext^1(W_1, W_2) \to I$ given by $U \mapsto [\mathcal{Y}]$. This map is well-defined because any $V$-extension equivalent to the trivial extension corresponds to an inner derivation $V\to \H_N(W_1, W_2)$, which then corresponds to an element in $\mathcal{I}_0$. We now show the map is injective. Assume $U$ has image zero in the space $I$. Then for every $v\in V$, $\pi_2 Y_U(v, z)\psi_1\in \mathcal{I_0}$. So the cohomology class $[F]$ given by the map $F: v\mapsto \pi_2 Y_U(v, z)\psi_1$ is zero in $\widehat{H}^1(V, \H_N(W_1, W_2))$. From the bijective correspondence proved in Theorem \ref{Main-Ext-1}, we see that $U$ is equivalent to the trivial extension $W_1\oplus W_2$. 

Therefore, $\Ext^1(W_1, W_2)$ embeds into $I$. Hence 
$$\dim_\C \Ext^1(W_1, W_2) \leq \dim_\C I \leq N\binom{W_2}{VW_1} < \infty. $$
\end{proof}

\begin{rema}
Though the inclusion of a nice vertex subalgebra $V_0$ is a very strong condition, there are no further requirements. In particular, we do not require $\dim V_{(0)} = 1$ (as in \cite{HKL}), or the conformal element of $V$ coincides with the conformal element of $V_{(0)}$. Indeed, $V$ is not even necessarily commutative. One immediate example of such noncommutative extensions of nice vertex algebras are vertex superalgebras (see, for example, \cite{CY}), which are indeed $\frac{\Z}2$-graded meromorphic open-string vertex algebras (see \cite{FQ} for further details), to which all results generalize. 
\end{rema}

\begin{rema}
Theorem \ref{Ext-finite} hints a program to compute $\Ext^1(W_1, W_2)$. We start by classifying the equivalence classes of intertwining operators of type $\binom{W_2}{VW_1}$ that are derivations from $V$ to $\H_N(W_1, W_2)$ modulo those intertwining operators that are inner derivations. Then from each representative of the equivalence class, we may construct an extension of $W_1$ by $W_2$ explicitly. The process requires the knowledge of $V_0$-intertwining operators, branching rules of $V$ and $V$-modules as $V_0$-modules, and correlation functions of related intertwining operators. 
One immediate example is the affine VOA $V=V_k(\g)$, $V_0=V_k(\h)$ where $\h$ is the Cartan subalgebra, and $W_1, W_2$ are weight modules. Initial attempts shows that although the fusion rule is infinite, the equation for the intertwining operator to form a derivation is very restrictive. Many obvious choices of $\mathcal{Y}$ do not satisfy equations. We believe that the program will help in studying the representation theory of affine VOA with nonadmissible levels.  
\end{rema}

\section{An example from the Virasoro VOA}\label{Section-5}

In this section, we demonstrate an example of an abelian category of modules for the Virasoro VOA satisfying the conditions Definition \ref{composibility}. Note that in this case, the VOA does not contain any nice subalgebra as in Section \ref{s-4-3}. This suggests that that the conditions in Definition \ref{composibility} might not be a serious obstruction for the application of cohomology theory developed in this paper.

\subsection{Review of Virasoro algebra and Feigin-Fuchs theory}
Let $Vir$ be the Virasoro algebra, i.e., $Vir = \bigoplus_{n\in \Z}\C L_n \oplus \C \mathbf{k}$, with 
$$[\mathbf{k}, L_n] = 0, [L_m, L_n] = (m-n)L_{m+n} + \delta_{m+n,0} \frac{m^3-m}{12} k, $$
for every $m,n\in \Z$. Fix $c,h\in \C$. Consider the one-dimensional space $\C\one_{c,h}$ where $\mathbf{k}$ acts by the scalar $c$, $L_0$ acts by the scalar $h$, and $L_n$ acts trivially for every $n>0$. The induced module
$$M(c,h) = U(Vir)\otimes_{\bigoplus_{n\geq 0}\C L_n \oplus \C \mathbf{k}} \C\one_{c,h}$$
is called the Verma module (aka, universal restricted module) associated with central charge $c$ and lowest weight $h$. For every $n\in \Z$, we will use the notation $L(n)$ to denote the action of $L_n$ on $M(c,h)$. It is known that $M(c,h)$ is graded by the $L(0)$-eigenvalues. For each $n\in \N$, the eigenspace $M(c,h)_{(h+n)}$ has a basis of vectors of the following form
\begin{align}
    L(-n_1)\cdots L(-n_s)\one_{c,h}, n_1 \geq \cdots \geq n_s \geq -1, n_1 + \cdots + n_s = n. \label{Verma-basis}
\end{align}
It is also known that $M(c,h)$ has a unique irreducible quotient, denoted by $L(c,h)$. Define a partial order among the pairs $(c,h)$: 
$$(c_1, h_1)\preceq (c_2, h_2) \text{ iff } L(c_2, h_2) \text{ is a subquotient of }M(c_1, h_1).$$
We use $\sim$ to denote the equivalence relation given by transitive closure of $\preceq$, i.e., 
$(c_1, h_1) \sim (c_2, h_2)$ iff there exists $(c^{(1)}, h^{(1)}), ..., (c^{(n)},h^{(n)})$ such that $(c^{(1)}, h^{(1)}) = (c_1, h_1), (c^{(n)}, h^{(n)})=(c_2, h_2)$, and for every $i=1, ..., n-1$, either $(c^{(i)}, h^{(i)})\preceq (c^{(i+1)}, h^{(i+1)})$ or $(c^{(i+1)}, h^{(i+1)})\preceq (c^{(i)}, h^{(i)})$. The equivalence class of $(c,h)$ is called the block, denoted by $[c,h]$. 

Here we list all the previously known results regarding the Verma modules, their submodules, and structure of blocks (see \cite{Ash}, \cite{BNW}, \cite{IK}, \cite{FF}, \cite{MP}). For simplicity, we follow the notation and classification in \cite{BNW}. 
\begin{enumerate}
    \item For any $c,h\in \C$, the submodules of $M(c,h)$ is generated by at most two singular vectors, i.e., vectors $w\in M(c,h)$ with $L(n) w = 0$ for every $n>0$. \item For any $c, h\in \C$ and any $n\in \N$, there exists at most one singular vector in the homogeneous subspace $M(c,h)_{(h+n)}$.  
    \item The submodule generated by a singular vector of weight $h'$ is isomorphic to $M(c,h')$. 
    \item For fixed $c, h\in \C$, let
    $$\nu = \frac{c-13+\sqrt{(c-1)(c-25)}}{12}, \beta = \sqrt{-4\nu h +(\nu+1)^2}$$
    and the line in the $rs$-plane
    $$\mathcal{L}_{c,h}: r + \nu s + \beta = 0.$$
    \begin{enumerate}
        \item Suppose $\mathcal{L}_{c,h}$ passes through no integer points or one integer point $(r,s)$ with $rs=0$, then $M(c,h)$ is irreducible. The block $[c,h] = \{(c,h)\}$. 
        \item Suppose $\mathcal{L}_{c,h}$ passes through no integer points or one integer point $(r,s)$ with $rs\neq 0$. Then the block $[c,h]$ is given by $\{(c,h), (c,h+rs)\}$. 
        \item Suppose $\mathcal{L}_{c,h}$ passes through infinitely many integer points and crosses an axis at an
        integer point. Label these points by $(r_i, s_i)$ so that
        $$ \cdots < r_{-2}s_{-2} < r_{-1}s_{-1} < 0 = r_0s_0 < r_1s_1 < r_2s_2 <\cdots$$
        The block $[c,h]$ is given by $[c,h]=\{(c, h+r_is_i): i \in \Z\}$. 
        \item Suppose $\mathcal{L}_{c,h}$ passes through infinitely many integer points and does not cross either axis at an
        integer point. Label these points by $(r_i, s_i)$ so that
        $$ \cdots < r_{-2}s_{-2} < r_{-1}s_{-1} < r_0s_0 < 0 < r_1s_1 < r_2s_2 <\cdots$$
        Also consider the auxiliary line $\tilde{\mathcal{L}}_{c,h}$ with the same slope as $L_{c,h}$ passing through the point $(-r_1, s_1)$. Label the integer points on $\tilde{\mathcal{L}}_{c,h}$ by $(r_j' , s_j')$ in the same way as $\mathcal{L}_{c,h}$. The block $[c,h]$ is given by $[(c,h)] = \{(c,h_i), (c,h_i'): i\in \Z\}$, where
        $$h_i = \left\{\begin{array}{ll} 
        h + r_is_i & i \text{ odd}\\
        h + r_1s_1 + r_i's_i' & i \text{ even}\end{array}\right.,h_i' = \left\{\begin{array}{ll} 
        h + r_{i+1}s_{i+1} & i \text{ odd}\\
        h + r_1s_1 + r_{i+1}'s_{i+1}' & i \text{ even}\end{array}\right. $$
    \end{enumerate}
    \item If $[c_1, h_1]\neq [c_2, h_2]$, then Ext$^1(M(c_1, h_1), M(c_2, h_2)) = 0$. 
\end{enumerate}
Let $V=V(c, 0)$ be the quotient of $M(c,0)$ by the submodule generated by $L(-1)\one_{c,0}$. It is known that $V(c,0)$ is a vertex operator algebra, and every $M(c,h)$ is a $V$-module. Let $W$ be a $V$-module which is a subquotient of a finite direct sum of Verma modules $M(c,h)$ appearing in the blocks described by (a), (b) and (c). The goal of this section is to show that $W$ satisfies Definition \ref{composibility}. Clearly, such modules form an abelian category. Thus the objects of the category $\CC$ formed by such modules are all in $\CC_N$.



\subsection{Verma module case}\label{Section-5-2}
We first study the simplest case where $W$ is the Verma module $M(c,h)$ appearing in the blocks (a), (b) and (c). We would omit the lowest weight vector and write 
$$W = span \{L(-n_1)\cdots L(-n_s) | n_1, ..., n_s\in \Z_+, n_1 \geq \cdots \geq n_s \geq 1\}. $$
Note that the vectors in the spanning set are linearly independent. 
For the vector $L(-n_1)\cdots L(-n_s)$, the number $\sum_{i=1}^s n_i$ is called the level. 

Every submodule of $M(c,h)$ is generated by singular vectors. We know from \cite{Ash} that for each $N\in \Z_+$, there exists at most one singular vector of level $N$ (up to a scalar), and the coefficient of $L(-1)^N$ is nonzero. 

Let $W_2 = M(c, h_1)$ be a submodule. By our choice of $W$, $W_2$ is generated by one singular vector
$$s_{h_1} = L(-1)^N + \sum_{\substack{L<N,M_1 \geq ...\geq M_L\geq 1 \\M_1+\cdots +M_L = N}}a_{M_1...M_L} L(-M_1)\cdots L(-M_L)$$
of some fixed level $N$. Then $h_1 = h + N$, and  
$$W_2 = span \{L(-n_1)\cdots L(-n_s)L(-1)^{n-N}s_{h_1} | n_1, ..., n_s\in \Z_+, n_1 \geq \cdots \geq n_s \geq 2, n\geq N\}. $$
Note also that the vectors in the spanning set are linearly independent. We now choose 
$$W_1 = span\{L(-n_1)\cdots L(-n_s) L(-1)^n| n_1, ..., n_s\in \Z_+, n_1\geq \cdots \geq n_s \geq 2, n < N\}.$$
Clearly, $W = W_1 \coprod W_2$ as a vector space.
Let $\pi_{W_2}: W \to W_2$ be the projection of $W$ onto $W_2$, i.e., 
$$\pi_{W_2}|_{W_2} = Id_{W_2}, \pi_{W_2}|_{W_1} = 0. $$
Then, for $n_1\geq \cdots \geq n_s \geq 2$, we have $$\pi_{W_2} (L(-n_1)\cdots L(-n_s)L(-1)^N) = L(-n_1)\cdots L(-n_s) s_{h_1}. $$
Moreover, 
$$\pi_{W_2} (L(-n_1)\cdots L(-n_s)L(-1)^n) \neq 0 $$
only when $n\geq N$.

\begin{defn}
Every element $w\in W$ can be written uniquely as
$$w = \sum_{p_1 \geq \cdots \geq p_t \geq 2} \sum_{q\geq 0}a_{p_1\cdots p_t, q}L(-p_1)\cdots L(-p_t) L(-1)^q. $$
We will refer this summation as the standard expression of $w$. 
Clearly the sum is finite, i.e., there exists finitely many $p_1, ..., p_t$ and $q$, such that $a_{p_1\cdots p_t, q}\neq 0$. We define the index of $w$ as
$$\Ind(w) = \max \{q|a_{p_1\cdots p_t, q} \neq 0, p_1 \geq \cdots \geq p_t \geq 2\}. $$
If $w = 0$, we define $\Ind(w) = -1$. Note that $\Ind(w_1+w_2) \leq \max\{\Ind(w_1), \Ind(w_2)\}. $
Note also that 
if $\Ind(w) < N$, then $\pi_{W_2}(w) = 0$. 
\end{defn}

\begin{prop}\label{Ind-Prop}
\begin{enumerate}
    \item For every $m\geq 2$, 
    $$\Ind(L(-m) w) = \Ind(w). $$
    \item Fix $n \in \Z_+$ and $n_1, ..., n_s \geq 2$ (not necessarily decreasing). Then
    $$\Ind(L(m)L(-n_1)\cdots L(-n_s) L(-1)^n) \leq n+1. $$
    The equality holds only for finitely many $m$. Moreover, if we restrict that $m\geq -1$, then only finitely many $m$ makes
    $$\Ind(L(m)L(-n_1)\cdots L(-n_s) L(-1)^n) \geq n.$$
\end{enumerate}
\end{prop}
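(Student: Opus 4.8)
The plan is to prove both parts by exploiting the commutation relation $[L(m), L(-n)] = (m+n) L(m-n) + \delta_{m,n}\frac{m^3-m}{12} c$ to move generators past each other while carefully tracking how the power of $L(-1)$ changes. The key bookkeeping tool is the index $\Ind$, which measures the largest power of $L(-1)$ appearing when a vector is written in standard form (generators $L(-p)$ with $p \geq 2$ to the left, a power of $L(-1)$ to the right); recall $\Ind$ is additive under sums in the sense $\Ind(w_1 + w_2) = \max\{\Ind(w_1), \Ind(w_2)\}$.

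For part (1), I would argue that for $m \geq 2$, left-multiplication by $L(-m)$ on a standard monomial $L(-p_1)\cdots L(-p_t)L(-1)^q$ produces, after reordering the $L(-p_i)$'s among themselves (which only introduces lower-level $L(-j)$ terms via the bracket, never affecting the $L(-1)$ exponent since all indices involved stay $\geq 2$ when $m, p_i \geq 2$ — note $[L(-m), L(-p)] = (p - m)L(-m-p)$ has index $-m-p \leq -4$), a linear combination of standard monomials all with $L(-1)$-exponent exactly $q$. Hence $\Ind(L(-m)w) \leq \Ind(w)$; conversely, the top-index term of $w$ produces a term with the same $L(-1)$-exponent that cannot be cancelled (the leading $L(-m)\cdots$ part survives), so equality holds. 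The one subtlety is the $h = 0$ case where the spanning set is not a basis, but there the relation $L(-1)\one_{c,0} = 0$ only removes monomials, and the index is still well-defined on the quotient; I would remark this briefly.

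For part (2), I would write $L(m)L(-n_1)\cdots L(-n_s)L(-1)^n$ and push $L(m)$ to the right step by step. Moving $L(m)$ past one $L(-n_j)$ with $n_j \geq 2$ yields $L(-n_j)L(m) + (m+n_j)L(m - n_j)$; the bracket term $L(m-n_j)$ has an argument that could be $-1$ only if $n_j = m+1$, and then it contributes a factor $L(-1)$, which is exactly what raises the index by at most one beyond $n$ — giving the bound $n+1$. Iterating, every path down to where $L(m)$ finally hits the vacuum (killing everything once $m \geq 0$ gives $L(m)\one$, or $m = -1$ gives nothing since we killed $L(-1)\one$) can increase the $L(-1)$-power by at most the number of times a bracket produced an $L(-1)$, but a dimension/weight count shows at most one such $L(-1)$ can survive in any surviving term — more carefully, the weight of $L(m)L(-n_1)\cdots L(-n_s)L(-1)^n$ as a vector is $n + \sum n_i - m$ (relative to the lowest weight), and a standard monomial of index $q$ has weight $\geq 2(\text{number of }p_i) + q \geq q$, forcing $q \leq n + \sum n_i - m$; this alone is too weak, so the real argument is the inductive pushing-right which shows each of the $s$ steps adds at most... actually the sharp statement is that only the single transposition where $n_j = m+1$ can produce an $L(-1)$ and subsequent re-sorting cannot create more, yielding $\Ind \leq n+1$. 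The equality $\Ind = n+1$ requires $m+1 \in \{n_1, \dots, n_s\}$, which holds for only finitely many $m$ (at most $s$ values), and similarly $\Ind \geq n$ under $m \geq -1$ forces either $m = -1$ (so $L(m)L(-1)^n = L(-1)^{n+1}$ after commuting past the $L(-n_i)$, contributing index terms) or a bracket hitting $L(-1)$, again only finitely many $m$; I would enumerate these cases.

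The main obstacle I anticipate is part (2): the naive weight bound is too coarse, so the proof must genuinely track the combinatorics of which commutator terms can generate a surviving $L(-1)$ factor, and must verify that re-sorting the $L(-p_i)$ generators into decreasing order never manufactures additional $L(-1)$'s (true because sorting only produces $L(-p-q)$ with $p, q \geq 2$). Making this "at most one $L(-1)$ is produced and survives" claim rigorous — rather than hand-waving it — is the crux, and I would handle it by a clean induction on $s$, the number of $L(-n_i)$ factors, with the inductive hypothesis phrased directly in terms of $\Ind$.
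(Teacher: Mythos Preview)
Your approach---use the Virasoro commutator and induct on $s$ for part (2)---is exactly the paper's (whose entire proof is the one line ``These follow easily from the commutator relation''). Two corrections to your heuristics before you write it out. First, in the Verma module $M(c,h)$ one has $L(-1)\one_{c,h}\neq 0$, so your remark that ``$m=-1$ gives nothing since we killed $L(-1)\one$'' confuses $M(c,h)$ with the quotient $V(c,0)$; the base case $s=0$ is $\Ind(L(m)L(-1)^n\one_{c,h})$, which equals $n$ for $m\le-2$ (by part (1)), equals $n+1$ for $m=-1$, and is $\le n-m$ for $m\ge 0$ (push $L(m)$ through using $[L(m),L(-1)]=(m+1)L(m-1)$). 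Second, your claim that $\Ind=n+1$ forces $m+1\in\{n_1,\dots,n_s\}$ is false---$m=-1$ always achieves equality regardless of the $n_j$---but this does not matter: once the base case is correct, the finiteness of both $\{m:\Ind=n+1\}$ and $\{m\ge-1:\Ind\ge n\}$ drops out of the induction on $s$ directly (each recursive branch $L(-n_1)L(m)w'$ and $(m+n_1)L(m-n_1)w'$ contributes finitely many $m$), and you never need to characterize these sets explicitly.
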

\begin{proof}
These follow easily from the commutator relation 
$$L(m)L(n) - L(n)L(m) = (m-n)L(m+n) + \delta_{m+n,0}\frac{m^3-m}{12}, m,n\in \Z.$$
\end{proof}

\begin{prop}\label{pi-L(-m)-comm}
For every $w\in W$ and every integer $m\geq 2$, 
$$\pi_{W_2}(L(-m)w) = L(-m)\pi_{W_2}(w). $$
\end{prop}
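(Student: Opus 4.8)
The plan is to decompose an arbitrary $w \in W$ into the part that lies in $W_2$ and the part that lies in $W_1$, and verify the desired commutation relation separately on each summand. Since $\pi_{W_2}$ is linear and $\Ind(L(-m)w_1 + L(-m)w_2) = \max\{\Ind(L(-m)w_1), \Ind(L(-m)w_2)\}$ respects the decomposition, it suffices to treat the two cases $w \in W_2$ and $w \in W_1$ separately.

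First I would handle the case $w \in W_2$. Since $W_2 = M(c, h_1)$ is a submodule of $W$, we have $L(-m)w \in W_2$ for every $m$, so $\pi_{W_2}(L(-m)w) = L(-m)w = L(-m)\pi_{W_2}(w)$, where the last equality uses $\pi_{W_2}(w) = w$. This case is immediate.

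Next I would handle the case $w \in W_1$. Recall the spanning set of $W_1$ consists of monomials $L(-p_1)\cdots L(-p_t)L(-1)^q$ with $p_1 \geq \cdots \geq p_t \geq 2$ and $q < N$; by linearity it suffices to take $w$ to be one such monomial. Then $L(-m)w = L(-m)L(-p_1)\cdots L(-p_t)L(-1)^q$ is again in standard form up to reordering the $L(-m), L(-p_1), \dots, L(-p_t)$ factors, and crucially this reordering only involves commutators $[L(-a), L(-b)]$ with $a, b \geq 2$, which produce terms $L(-a-b)$ with $a+b \geq 4 \geq 2$ — so no new factors of $L(-1)$ are ever created. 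Hence every monomial appearing in the standard expression of $L(-m)w$ still has $L(-1)$-exponent exactly $q < N$, i.e. $\Ind(L(-m)w) = q < N$ (this is Proposition \ref{Ind-Prop}(1) applied repeatedly, or a direct argument). Since $\Ind(L(-m)w) < N$ implies $\pi_{W_2}(L(-m)w) = 0$, and likewise $\Ind(w) = q < N$ gives $\pi_{W_2}(w) = 0$, both sides of the claimed identity vanish. Combining the two cases and using linearity of both $\pi_{W_2}$ and $L(-m)$ completes the proof.

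The main subtlety — though it is minor here — is making precise the claim that conjugating $L(-m)$ past the $L(-p_i)$'s never lowers the $L(-1)$-index below $N$ when starting from $W_1$; this is really just the observation that the subspace of $W$ spanned by monomials with $L(-1)$-exponent $< N$ is stable under left multiplication by $L(-m)$ for $m \geq 2$, which follows from the Virasoro commutation relations since $[L(-m), L(-p)] \in \mathrm{span}\{L(-m-p)\}$ with $m + p \geq 4$. No genuine obstacle arises; the statement is essentially a bookkeeping consequence of Proposition \ref{Ind-Prop}(1) together with the submodule property of $W_2$.
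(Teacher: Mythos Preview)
Your proof is correct and is in fact cleaner than the paper's argument. Both proofs rest on the same key observation---that commuting $L(-m)$ for $m\geq 2$ past $L(-p_i)$'s with $p_i\geq 2$ never produces factors of $L(-1)$, which is exactly Proposition~\ref{Ind-Prop}(1)---but you exploit it more directly.

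The paper works in the PBW basis $\{L(-n_1)\cdots L(-n_s)L(-1)^n : n_1\geq\cdots\geq n_s\geq 2,\ n\geq 0\}$ and, for basis vectors with $n\geq N$, runs an induction on $n$: the base case $n=N$ uses the explicit formula $\pi_{W_2}(L(-p_1)\cdots L(-p_t)L(-1)^N)=L(-p_1)\cdots L(-p_t)s_{h_1}$, and the inductive step rewrites $L(-1)^n$ as $L(-1)^{n-N}s_{h_1}$ minus correction terms of strictly smaller index. You bypass this induction entirely by decomposing along $W=W_1\amalg W_2$: on $W_2$ the identity is just the submodule property, and on $W_1$ both sides vanish because $L(-m)W_1\subseteq W_1$ for $m\geq 2$ (equivalently, $\Ind(L(-m)w)=\Ind(w)<N$). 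What the paper's longer route implicitly yields is a recursive description of $\pi_{W_2}$ on PBW monomials with $n\geq N$, but that is not needed for the proposition itself.

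One minor expository point: your remark that $\Ind(L(-m)w_1+L(-m)w_2)=\max\{\Ind(L(-m)w_1),\Ind(L(-m)w_2)\}$ is unnecessary for the argument (and not literally true in general, since cancellations can occur); linearity of $\pi_{W_2}$ and of $L(-m)$ alone already justifies treating the two summands separately.
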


\begin{proof}
It suffices to consider 
$$w = L(-n_1)\cdots L(-n_s)L(-1)^n$$
with $n_1\geq \cdots \geq n_s\geq 2$ and $n\geq N$ (when $n<N$ the conclusion holds trivially). Clearly, in the universal enveloping algebra of the Virasoro algebra, the standard expression of $L(-m)L(-n_1)\cdots L(-n_s)$ does not have any summands that contains factors of $L(-1)$. In other words, 
$$L(-m)L(-n_1)\cdots L(-n_s) = \sum_{t\geq 0, p_1 \geq \cdots \geq p_t \geq 2}b_{p_1\cdots p_t}L(-p_1)\cdots L(-p_s). $$
Now we argue by induction. For $n=N$, we have
\begin{align*}
    & \pi_{W_2} (L(-m)L(-n_1)\cdots L(-n_s) L(-1)^N)\\ =& \pi_{W_2}\left( \sum_{t\geq 0, p_1 \geq \cdots \geq p_t \geq 2}b_{p_1\cdots p_t}L(-p_1)\cdots L(-p_s)L(-1)^N \right)\\
    =& \sum_{t\geq 0, p_1 \geq \cdots \geq p_t \geq 2}b_{p_1\cdots p_t}L(-p_1)\cdots L(-p_s)s_{h_1} \\
    =& L(-m)L(-n_1)\cdots L(-n_s) s_{h_1} \\
    =& L(-m)\pi_{W_2}(L(-n_1)\cdots L(-n_s) L(-1)^N),
\end{align*}
where the second and fourth equal signs follow from the definition of $\pi_{W_2}$, the third equal sign follows from the equality in the universal enveloping algebra. 
For general $n$, we have 
\begin{align*}
    & \pi_{W_2} (L(-m)L(-n_1)\cdots L(-n_s) L(-1)^n)\\ 
    =& \pi_{W_2}\left( L(-m)L(-n_1)\cdots L(-n_s)L(-1)^{n-N}s_{h_1} \right)\\
    & -  \pi_{W_2}\left( L(-m)L(-n_1)\cdots L(-n_s)L(-1)^{n-N}\sum_{\substack{L\leq N, M_1 \geq \cdots \geq M_L \geq 2\\ M_1 + \cdots + M_L = N}}a_{M_1\cdots M_L}L(-M_1)\cdots L(-M_L) \right)
\end{align*}
From the definition of $\pi_{W_2}$, the first term is precisely 
$$L(-m)L(-n_1)\cdots L(-n_s) L(-1)^{n-N}s_{h_1} = L(-m)\pi_{W_2}(L(-n_1)\cdots L(-n_s)L(-1)^{n-N}L(-1)^N).$$ For the second sum, since the standard expression of 
$$L(-n_1)\cdots L(-n_2)L(-1)^{n-N}L(-M_1)\cdots L(-M_L)$$
for each fixed $M_1, ..., M_L$ has index strictly smaller than $n$, the induction hypothesis applies, showing that 
\begin{align*}
    & \pi_{W_2}(L(-m)L(-n_1)\cdots L(-n_2)L(-1)^{n-N}L(-M_1)\cdots L(-M_L))\\
    =& L(-m)\pi_{W_2}(L(-n_1)\cdots L(-n_2)L(-1)^{n-N}L(-M_1)\cdots L(-M_L))
\end{align*}
for every $M_1, ..., M_L$ appearing in the sum. The conclusion then follows. 
\end{proof}

\begin{rema}
In general we do not have $\pi_{W_2}(L(m)w) = L(m)\pi_{W_2}(w)$ for $m\geq -1$. The simplest counter-example is $m=-1, w=L(-1)^{N-1}$. Left-hand-side is nonzero while the right-hand-side is zero. 
\end{rema}

\begin{prop}\label{base-case}
Fix $w_1\in W_1$. Then 
\begin{align*}
    \pi_{W_2} Y_W(\omega, x)w_1\in W_2[x, x^{-1}].
\end{align*}
Moreover, $\pi_{W_2} Y_W(\omega, x)w_1\neq 0$ only when $\Ind(w_1) = N-1$. 
\end{prop}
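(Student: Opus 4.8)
The plan is to analyze $Y_W(\omega, x)w_1$ mode by mode. Writing $Y_W(\omega, x) = \sum_{m\in\Z} L(m)x^{-m-2}$, it suffices to show that for all but finitely many $m$ we have $\pi_{W_2}(L(m)w_1) = 0$, and that the nonzero contributions force $\Ind(w_1) = N-1$. First I would put $w_1$ in its standard expression $w_1 = \sum_{p_1\ge\cdots\ge p_t\ge 2}\sum_{q<N} a_{p_1\cdots p_t,q} L(-p_1)\cdots L(-p_t)L(-1)^q$; since $w_1\in W_1$, every $q$ appearing satisfies $q<N$, in particular $\Ind(w_1)\le N-1$. By linearity it is enough to treat a single monomial $w = L(-n_1)\cdots L(-n_s)L(-1)^n$ with $n_1\ge\cdots\ge n_s\ge 2$ and $n < N$.

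Next I would split into the cases $m\ge -1$ and $m\le -2$. For $m\le -2$, Proposition \ref{pi-L(-m)-comm} gives $\pi_{W_2}(L(m)w) = L(m)\pi_{W_2}(w)$; but $\Ind(w) = n < N$ forces $\pi_{W_2}(w) = 0$, so these modes contribute nothing. For $m\ge -1$, I apply Corollary \ref{finite-sum} with $l=1$, $p=1$ (the single index $\alpha_1 = 1$, no complement): it tells us $\Ind(L(m)w)\le n+1\le N$, and moreover $\Ind(L(m)w)\ge n$ for only finitely many $m\ge -1$; whenever $\Ind(L(m)w) < N$ we again get $\pi_{W_2}(L(m)w)=0$ by the defining property of $\Ind$. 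So the only possibly nonzero modes are those finitely many $m\ge -1$ with $\Ind(L(m)w)\in\{N-1, N\}\cap\{n,\dots,n+1\}$, which is nonempty only if $n\in\{N-1,N\}$; since $n<N$ this forces $n = N-1$, i.e. $\Ind(w) = N-1$. This simultaneously establishes the polynomiality (finitely many nonzero modes, so $\pi_{W_2}Y_W(\omega,x)w_1\in W_2[x,x^{-1}]$) and the condition $\Ind(w_1) = N-1$ for non-vanishing. For the converse direction of the "only when" — that $\Ind(w_1)=N-1$ is necessary — note the above shows all monomials with $n\le N-2$ contribute zero to $\pi_{W_2}Y_W(\omega,x)w_1$, so if the whole expression is nonzero some monomial with $n = N-1$ must be present.

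The main obstacle I anticipate is bookkeeping rather than conceptual: making sure the case $m = -1$ is handled correctly, since $L(-1)$ raises the power of $L(-1)$ and is exactly the edge case where $\Ind$ can jump from $N-1$ to $N$ (and the remark after Proposition \ref{pi-L(-m)-comm} warns that $\pi_{W_2}$ does not commute with $L(-1)$). One must check that even when $\Ind(L(-1)w) = N$ the projection $\pi_{W_2}(L(-1)w)$ genuinely can be nonzero — this is what makes the statement sharp — but for the claimed conclusion we only need the upper bound $\Ind\le n+1$ together with finiteness, both of which Corollary \ref{finite-sum} supplies directly. A secondary point is to confirm that "finitely many nonzero modes" really does land us in $W_2[x,x^{-1}]$ and not merely $W_2((x))$ or $W_2[[x,x^{-1}]]$: this follows because $W$ is lower-bounded in weight and $\pi_{W_2}$ is weight-preserving, so positive modes $L(m)$, $m$ large, annihilate $w_1$ as well, bounding the mode range from above independently of the index argument.
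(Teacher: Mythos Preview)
Your argument is correct and takes a genuinely different route from the paper. The paper argues by induction on $s$ (the number of factors $L(-n_i)$ with $n_i\ge 2$ in the monomial $L(-n_1)\cdots L(-n_s)L(-1)^n$): it first treats the $s=0$ case by an explicit computation showing $\pi_{W_2}Y_W(\omega,x)L(-1)^n = s_{h_1}x^{-1}\delta_{n,N-1}$, and then for general $s$ splits off the outermost $L(-n_1)$ via the commutator and invokes the induction hypothesis on the shorter string. You instead bypass this induction entirely by appealing to the already-established index machinery: Proposition~\ref{pi-L(-m)-comm} disposes of all modes $m\le -2$ in one line, and Corollary~\ref{finite-sum} with $l=p=1$ gives both the bound $\Ind(L(m)w)\le n+1$ and the finiteness statement for $m\ge -1$, from which polynomiality and the constraint $n=N-1$ follow immediately.

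Your approach is shorter and makes better use of the tools just built; the paper's approach, by contrast, is more explicit (it actually exhibits the surviving term $s_{h_1}x^{-1}$ rather than merely bounding where nonzero terms can occur), and its inductive template is reused in the proof of Theorem~\ref{general-l-thm}. One small point of phrasing: when you write ``$\Ind(L(m)w)\in\{N-1,N\}\cap\{n,\dots,n+1\}$'' you really mean $\Ind(L(m)w)\ge N$ together with $\Ind(L(m)w)\le n+1$, which forces $n+1\ge N$; the set you wrote is not quite the right object, though your conclusion $n=N-1$ is correct. Also note that Corollary~\ref{finite-sum} is stated for $n\in\Z_+$; your argument tacitly uses it for $n=0$ as well, which is harmless since the proof there extends verbatim, but is worth remarking if you write this up.
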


\begin{proof}
We focus on a basis vector $$w_1 = L(-n_1)\cdots L(-n_s)L(-1)^{n}, n_1\geq \cdots \geq n_s \geq 2, n < N.$$
Write
\begin{align*}
    \pi_{W_2} Y_W(\omega, x)w_1 &= \pi_{W_2} Y_{W}^-(\omega, x)w_1 + \pi_{W_2} Y_W^+(\omega, x)w_1
\end{align*}
where $Y_W^+(\omega, x) = \sum\limits_{m\leq -2}L(m)x^{-m-2}$ is the regular part of $Y_{W}(\omega, x)$, $Y_W^-(\omega, x) = \sum\limits_{m\geq -1}L(m)x^{-m-2}$ is the singular part of $Y_W(\omega, x)$. From Proposition \ref{pi-L(-m)-comm}, we know that 
$$\pi_{W_2}Y_W^+(\omega, x)w_1 = Y_W^+(\omega, x)\pi_{W_2}w_1 = 0.$$
Therefore, 
\begin{align}
    \pi_{W_2}Y_{W}(\omega, x)w_1 = \pi_{W_2}Y_W^-(\omega, x)w_1 = \sum_{m\geq -1} \pi_{W_2} L(m)L(-n_1)\cdots L(-n_s)L(-1)^{n}x^{-m-2} \label{5-1}
\end{align}
is a finite sum in $W_2[x, x^{-1}]$. Moreover, from Proposition \ref{Ind-Prop}, if $n< N-1$, then the indices of $L(m)L(-n_1)\cdots L(-n_s)L(-1)^n$ appearing in (\ref{5-1}) are all strictly less than $N$. So in this case, $\pi_{W_2}Y_{W}(\omega, x)w_1 = 0$. Therefore, (\ref{5-1}) is nonzero only when $\Ind(w_1) = N-1$. 
\end{proof}

\begin{defn}
Let $l\in \Z_+$ and $p\in \{1, ..., l\}$. We call $\alpha_1, ..., \alpha_p$ an increasing sequence in $\{1, ..., l\}$ if $1\leq \alpha_1 < \cdots < \alpha_p \leq l$. The number $p$ is called the length of the sequence $\alpha_1, ..., \alpha_p$. Sometimes we will also use the notation $(\alpha_1, ..., \alpha_p)$ to denote an increasing sequence. For a fixed increasing sequence $\alpha_1, ..., \alpha_p$, there exists a unique increasing sequence $\alpha^c_1, ..., \alpha^c_{l-p}$ such that $\{\alpha_1, ..., \alpha_p\}\coprod\{\alpha^c_1, ..., \alpha^c_{l-p}\} = \{1, ..., l\}$. We call the increasing sequence $\alpha^c_1, ..., \alpha^c_{l-p}$ the complement of $\alpha_1, ..., \alpha_p$. Note that when $p = l$, then an increasing sequence is uniquely $1, ..., l$ whose complement is empty. We call $(\alpha_1, ..., \alpha_p)$ and $(\alpha_1^c, ..., \alpha_{l-p}^c)$ two complementary increasing sequences in $\{1, ..., l\}$. 
\end{defn}

\begin{thm}\label{general-l-lemma}
For every $l\in \Z_+$ and every $w\in W$, the series 
$$ Y_{W}(\omega, x_1) \cdots Y_W(\omega, x_l)w$$
is a linear combination of series of the form 
\begin{align}
    Y_W^+(v_1, x_{\alpha_1^c})\cdots Y_W^+(v_{l-p}, x_{\alpha_{l-p}^c})\iota_{1...l}\left(\frac{g(x_{\alpha_1}, ..., x_{\alpha_p})}{\prod\limits_{1\leq i<j\leq l}(x_i-x_j)^{P_{ij}}}\right), \label{5-5}
\end{align}
where $p\in \N, v_1, ..., v_{l-p}\in V$ falls in the subspace spanned by $\one$ and $L(-1)^q\omega, q\in \N$ , $g(x_{\alpha_1}, ..., x_{\alpha_{p}}) \in W[x_{\alpha_1}, x_{\alpha_1}^{-1}, ..., x_{\alpha_p}, x_{\alpha_{p}}^{-1}]$ is a Laurent polynomial with coefficient in $W_2$, $P_{ij}$ are nonnegative integers satisfying $\sum\limits_{1\leq i < j \leq l}P_{ij}\leq 2p$.

\end{thm}

\begin{proof}
The idea of the following proof is suggested by Reviewer 1. We will proceed with induction on $l$. For the base case $l=1$, we write 
$$Y_W(\omega, x_1) w = Y_W^+(\omega, x_1) w + Y_W^-(\omega, x_1)w$$
Regarding $w$ in the first term as a constant polynomal, and note that $Y_W^-(\omega, x_1)w$ in the second term is a Laurent polynomial, we see the conclusion for $l=1$. 

To argue the inductive step, we recall (\cite{LL}, Formula 3.8.7) that for every $u, v$ in a vertex algebra, 
$$[Y_W^-(u, x_0+x_2), Y_W(v, x_2)] = Y_W(Y^-(u, x_0)v, x_2)$$
as formal series in $x_0, x_2$. Since $Y^-(u, x_0)v$ consists of finitely many powers of $x_0$, the series $Y_W(Y^-(u, x_1-x_2)v, x_2)$ exists as a formal series in $x_1, x_2$, and is equal to $[Y_W^-(u, x_1), Y_W(v, x_2)]$. Thus for $u=v=\omega$, 
\begin{align}
    & [Y_W^-(\omega, x_1), Y_W(\omega, x_2)]
    = Y_W(Y^-(\omega, x_1-x_2)\omega, x_2)=\sum_{j\geq 0} Y_W(\omega_j \omega, x_2)\iota_{12}(x_1-x_2)^{-j-1}\nonumber\\
    = \ &Y_W(L(-1)\omega, x_2)\cdot \iota_{12}(x_1-x_2)^{-1} + 2 Y_W(\omega, x_2)\cdot \iota_{12}(x_1-x_2)^{-2} + \frac 1 2 c_V Y_W(\one, x_2)\cdot \iota_{12} (x_1-x_2)^{-4}\label{5-6}
\end{align} 


Now we assume the statement holds for all smaller $l$ and show the inductive step. Using formula (\ref{5-6}), we rewrite 
\begin{align}
    & Y_W(\omega, x_1)\cdots Y_W(\omega, x_l)w \nonumber\\
    = \ & Y_W^+(\omega, x_1)Y_W(\omega, x_2)\cdots Y_W(\omega, x_l)w \label{5-2}\\
    & + \sum_{k=2}^l Y_W(\omega, x_2) \cdots [Y_W^-(\omega, x_1), Y_W(\omega, x_k)] \cdots Y_W(\omega, x_l)w \label{5-3}\\
    & + Y_W(\omega, x_2)\cdots Y_W(\omega, x_l)Y_W^-(\omega, x_1)w \label{5-4}
\end{align}
We proceed to show that (\ref{5-2}), (\ref{5-3}) and (\ref{5-4}) are linear combinations of terms of the same form as (\ref{5-5}). 

For (\ref{5-2}), by the induction hypothesis, (\ref{5-2}) is a linear combination of 
\begin{align*}
    Y_W^+(\omega, x_1) Y_W^+(v_1, x_{\alpha_1^c})\cdots Y_W^+(v_{l-1-p}, x_{\alpha_{l-1-p}^c}) \iota_{1\cdots l}\left(\frac{g(x_{\alpha_1}, ..., x_{\alpha_p})}{\prod\limits_{2\leq i < j \leq l}(x_i - x_j)^{P_{ij}}}\right).
\end{align*}
Here $(\alpha_1, ..., \alpha_p)$ and $(\alpha_1^c, ..., \alpha_{l-1-p}^c)$ are complementary increasing sequences in $\{2, ..., l\}$. Thus the conclusion holds for (\ref{5-2}). Moreover, for each summand in the linear combination, the sum of exponents of $x_i - x_j$ $(1 \leq i < j \leq l)$ in the denominator is less or equal to $2p$. 

For (\ref{5-3}), we analyze the summand for each fixed $k$. Using (\ref{5-6}), the summand can be rewritten as 
\begin{align}
    & Y_W(\omega, x_2)\cdots Y_W(\omega, x_{k-1})Y_W(L(-1)\omega, x_k)Y_W(\omega, x_{k+1})\cdots Y_W(\omega, x_l)w\cdot  \iota_{1k}(x_1-x_k)^{-1}\label{5-7}\\
    & + 2 Y_W(\omega, x_2)\cdots Y_W(\omega, x_{k-1})Y_W(\omega, x_k)Y_W(\omega, x_{k+1})\cdots Y_W(\omega, x_l)w\cdot  \iota_{1k}(x_1-x_k)^{-2}\label{5-8}\\
    & + \frac 1 2 c_V Y_W(\omega, x_2)\cdots Y_W(\omega, x_{k-1})Y_W(\omega, x_{k+1})\cdots Y_W(\omega, x_l)w \cdot \iota_{1k}(x_1-x_k)^{-4}\label{5-9}
\end{align}
Likewise, we argue that (\ref{5-7}), (\ref{5-8}) and (\ref{5-9}) are linear combinations of terms of the form (\ref{5-5})
\begin{itemize}
    \item For (\ref{5-7}), using the $L(-1)$-derivative property and the induction hypothesis, we see that (\ref{5-7}) is a linear combination of 
\begin{align*}
    \iota_{1k}(x_1-x_k)^{-1}\frac{\partial}{\partial x_k}\left(Y_W^+(v_1, x_{\alpha_1^c})\cdots Y_W^+(v_{l-1-p}, x_{\alpha_{l-1-p}^c})\iota_{2\cdots l}\left(\frac{g(x_{\alpha_1}, ..., x_{\alpha_p})}{\prod\limits_{2\leq i < j \leq l}(x_i-x_j)^{P_{ij}}}\right)\right) 
\end{align*}
Here $(\alpha_1, ..., \alpha_p)$ and $(\alpha_1^c, ..., \alpha_{l-1-p}^c)$ are complementary increasing sequences in $\{2, ..., l\}$, and $\sum\limits_{2\leq i < j \leq l}P_{ij} \leq 2(p-1)$. If $k$ appears in the sequence $(\alpha_1, ..., \alpha_p)$, then $\partial / \partial_k$ passes to the fraction, resulting in a finite sum of fractions whose denominators have their exponents added up to $\sum\limits_{2\leq i < j \leq l}P_{ij} + 1 \leq 2p + 1$. Otherwise, if $k$ appears in the sequence $\alpha_1^c, ..., \alpha_{l-1-p}^c$, say $\alpha_i^c = k$, then it suffices to add an extra term with $v_i$ modified into $L(-1)v_i$. Then we may combine $\iota_{1k}(x_1-x_k)^{-1}$ into the fraction to get an expression of the form (\ref{5-5}), where the sum of the exponents in the denominator is bounded above by $2p+1+1 = 2(p+1)$, where $p+1$ is precisely the updated length of the increasing sequence (by adding $1$). This concludes the discussion of (\ref{5-7}). 
\item For (\ref{5-8}), by induction hypothesis (ignoring the coefficient 2), we see that (\ref{5-8}) is a linear combination of 
\begin{align*}
    \iota_{1k}(x_1-x_k)^{-2}\cdot Y_W^+(v_1, x_{\alpha_1^c})\cdots Y_W^+(v_{l-1-p}, x_{\alpha_{l-1-p}^c})\iota_{2\cdots l}\left(\frac{g(x_{\alpha_1}, ..., x_{\alpha_p})}{\prod\limits_{2\leq i < j \leq l}(x_i-x_j)^{P_{ij}}}\right)
\end{align*}
Here $(\alpha_1, ..., \alpha_p)$ and $(\alpha_1^c, ..., \alpha_{l-1-p}^c)$ are complementary increasing sequences in $\{2, ..., l\}$, and $\sum\limits_{2\leq i < j \leq l}P_{ij} \leq 2p$. 
Combining the $\iota_{1k}(x_1-x_k)^{-2}$ into the fraction, we get an expression of the form (\ref{5-5}), where the sum of the exponents in the denominator is $\sum\limits_{2\leq i < j \leq l}P_{ij}+2 \leq 2p+2 = 2(p+1)$, where $p+1$ is precisely the updated length of the increasing sequence (by adding $1$).
\item For (\ref{5-9}), by induction hypothesis (ignoring the coefficient $c_V/2$), we see that (\ref{5-9}) is a linear combination of 
\begin{align*}
    \iota_{1k}(x_1-x_k)^{-4}\cdot Y_W^+(v_1, x_{\alpha_1^c})\cdots Y_W^+(v_{l-2-p}, x_{\alpha_{l-2-p}^c})\iota_{2\cdots l}\left(\frac{g(x_{\alpha_1}, ..., x_{\alpha_p})}{\prod\limits_{\substack{2\leq i < j \leq l\\i\neq k, j\neq k}}(x_i-x_j)^{P_{ij}}}\right)
\end{align*}
Here $(\alpha_1, ..., \alpha_p)$ and $(\alpha_1^c, ..., \alpha_{l-1-p}^c)$ are complementary increasing sequences in $\{2, ..., k-1, k+1, ..., l\}$, and $\sum\limits_{\substack{2\leq i < j \leq l\\i\neq k, j\neq k}}P_{ij} \leq 2p$. 
Combining the $\iota_{1k}(x_1-x_k)^{-4}$ into the fraction, we get an expression of the form (\ref{5-5}), where the sum of the exponents in the denominator is $\sum\limits_{\substack{2\leq i < j \leq l\\i\neq k, j\neq k}}P_{ij}+4 \leq 2p+4 = 2(p+2)$ where $p+2$ is the updated length of increasing sequence (by adding $1$ and $k$).
\end{itemize}
This concludes the discussion of (\ref{5-3}). 

For (\ref{5-4}), notice that it is a linear combination of 
$$Y_W(\omega, x_2)\cdots Y_W(\omega, x_l)L(m)w x_1^{-m-2}. $$
The induction hypothesis shows that it is a linear combination of 
$$Y_W^+(v_1, x_{\alpha_1^c})\cdots Y_W^+(v_{l-1-p}, x_{\alpha_{l-1-p}^c}) \iota_{1\cdots l} \left(\frac{g(x_{\alpha_1}, ..., x_{\alpha_p})}{\prod\limits_{2\leq i < j \leq l}(x_i-x_j)^{P_{ij}}}\right) x_1^{-m-2}$$
with $\sum\limits_{2\leq i < j \leq n}P_{ij} \leq 2p < 2(p+1)$ (here $x_1^{-m-2}$ should be incorporated into $g$). This conclude the proof of the theorem. 
\end{proof}

\begin{thm}\label{general-l-thm}
    For every $l\in \Z_+$ and every $w_1\in W_1$, the series 
    $$ \pi_{W_2}Y_{W}(\omega, x_1) \cdots Y_W(\omega, x_l)w_1$$
    is a linear combination of series of the form 
    \begin{align}
        Y_W^+(v_1, x_{\alpha_1^c})\cdots Y_W^+(v_{l-p}, x_{\alpha_{l-p}^c})\iota_{1...l}\left(\frac{g(x_{\alpha_1}, ..., x_{\alpha_p})}{\prod\limits_{1\leq i<j\leq l}(x_i-x_j)^{P_{ij}}}\right), \label{general-l-thm-1}
    \end{align}
    where $p\in \Z_+, v_1, ..., v_{l-p}\in V$ falls in the subspace spanned by $\one$ and $L(-1)^q\omega, q\in \N$ , $g(x_{\alpha_1}, ..., x_{\alpha_{p}}) \in W[x_{\alpha_1}, x_{\alpha_1}^{-1}, ..., x_{\alpha_p}, x_{\alpha_{p}}^{-1}]$ is a Laurent polynomial with coefficient in $W_2$, $P_{ij}$ are nonnegative integers satisfying $\sum\limits_{1\leq i < j \leq l}P_{ij}\leq 2(p-1)$.         
\end{thm}

\begin{proof}
    Note that from Propositoin \ref{pi-L(-m)-comm}, $\pi_{W_2}$ commutes with $Y_W^+(\omega, x)$. We then apply $\pi_{W_2}$ directly to the conclusion of Theorem \ref{general-l-lemma} to conclude the first part (where $f(x_{\alpha_1}, ..., x_{\alpha_p}) = \pi_{W_2} g(x_{\alpha_1}, ..., x_{\alpha_p})$). Note that since $\pi_2 w_1=0$, there does not exist any nonzero term with $l$ number of $Y^+$ operators. So necessarily, $p>0$. 
    
    Regarding the estimate of $\sum\limits_{1\leq i < j \leq l}P_{ij}$, note that when $l = 2$, we see from the following computation 
    \begin{align*}
        & \pi_{W_2} Y_W(\omega, x_1) Y_W(\omega, x_2) w_1 \\
        = \ &  \pi_{W_2} Y_W^+(\omega, x_1)Y_W(\omega, x_2)w_1 + \pi_{W_2} [Y_W^-(\omega, x_1), Y_W(\omega, x_2)] w_1 + \pi_{W_2}Y_W(\omega, x_2)Y_W^-(\omega, x_1)w_1\\
        = \ & Y_W^+(\omega, x_1)\pi_{W_2}Y_W^-(\omega, x_2)w_1 + \pi_{W_2} Y_W(L(-1)\omega, x_2)w_1 \cdot \iota_{12}(x_1-x_2)^{-1} \\
        & + \pi_{W_2} Y_W(2\omega, x_2)w_1 \cdot \iota_{12}(x_1-x_2)^{-2}  + \frac{c_V} 2\pi_{W_2} Y_W(\one, x_2)w_1 \cdot \iota_{12}(x_1-x_2)^{-4}\\
        & + Y_W^+(\omega, x_2)\pi_{W_2}Y_W^-(\omega, x_1)w_1 + \pi_{W_2}Y_W^-(\omega, x_2)Y_W^-(\omega, x_1)w_1\\
        = \ & Y_W^+(\omega, x_1)\pi_{W_2}Y_W^-(\omega, x_2)w_1 + \pi_{W_2} Y_W^-(L(-1)\omega, x_2)w_1 \cdot \iota_{12}(x_1-x_2)^{-1} \\
        & + \pi_{W_2} Y_W^-(2\omega, x_2)w_1 \cdot \iota_{12}(x_1-x_2)^{-2}  + 0 + Y_W^+(\omega, x_2)\pi_{W_2}Y_W^-(\omega, x_1)w_1 + \pi_{W_2}Y_W^-(\omega, x_2)Y_W^-(\omega, x_1)w_1
    \end{align*}
    that if there exists one $Y^+$ operator, then the exponent of $(x_1-x_2)^{-1}$ is at most $0 = 2 \cdot (1-1)$; if there exists no $Y^+$ operators, then the exponent of $(x_1-x_2)^{-1}$ is at most $2 = 2 \cdot (2-1)$. Thus we may conclude that    
    $\sum\limits_{1\leq i < j \leq 2}P_{ij}$ appearing in each term of form (\ref{general-l-thm-1}) is at most $2(p-1)$ when $p=2$. Repeating the inductive step as in Theorem \ref{general-l-lemma}, we see that generally, $\sum\limits_{1\leq i < j \leq l}P_{ij} \leq 2(p-1)$. 
\end{proof}

\begin{lemma}\label{negative-removal}
    Assume that the series
    \begin{align}
        \sum_{i_1, ..., i_n\in \Z} a_{i_1\cdots i_n}z_1^{i_1}\cdots z_n^{i_n}\label{negative-removal-1}
    \end{align}
    converges absolutely in the region $|z_1|>\cdots > |z_n|> 0$ to a rational function that is of the form 
    \begin{align}
        \frac{f(z_1, ..., z_n)}{\prod\limits_{i=1}^n z_i^{P_i} \prod\limits_{1\leq i < j \leq n}(z_i-z_j)^{P_{ij}}} \label{negative-removal-2}
    \end{align}
    where $f(z_1, ..., z_n)$ is a Laurent polynomial in $z_1, ..., z_n$. Let $M = \sum\limits_{1\leq i < j \leq n}P_{ij}$. Then removing all the negative powers of $z_m, z_{m+1}, ...., z_n$ in (\ref{negative-removal-1}) results in a series that converges absolutely in the same region to a linear combination of rational functions of the form 
   \begin{align}
        \frac{g(z_1, ..., z_n)}{\prod\limits_{i=1}^{m-1} z_i^{Q_i} \prod\limits_{1\leq i < j \leq m-1}(z_i-z_j)^{Q_{ij}}\prod\limits_{\substack{1\leq i \leq m-1\\m\leq  j \leq n}}(z_i-z_j)^{Q_{ij}}} \label{negative-removal-2T}
    \end{align}
    with $\sum\limits_{1\leq i < j \leq n}Q_{ij}\leq M$. 
\end{lemma}

\begin{proof}
    We argue by induction. For the base case $n=2$, we first consider the case when the series and rational function is of the form 
    \begin{align}
        \frac{1}{z_1^pz_2^q(z_1-z_2)^r} & = \frac{1}{z_1^p}\frac{1}{(r-1)!}\frac{\partial^{r-1}}{\partial z_1^{r-1}}\left(\frac 1 {z_2^q (z_1-z_2)}\right) = \sum_{i=0}^\infty z_1^{-p} \left(\frac{1}{(r-1)!}\frac{\partial^{r-1}}{\partial z_1^{r-1}}z_1^{-1-i}\right)z_2^{i-q}\label{neg-remov-1}
    \end{align}
    We first remove the negative powers of $z_2$ from (\ref{neg-remov-1}). In case $q\leq 0$, (\ref{neg-remov-1}) contains only nonnegative powers of $z_2$. So there exists no change in $r$. In case $q>0$, we obtain the series
    \begin{align}
        \sum_{i=0}^\infty z_1^{-p} \left(\frac{1}{(r-1)!}\frac{\partial^{r-1}}{\partial z_1^{r-1}}z_1^{-1-i-q}\right)z_2^{i} = \frac{1}{(r-1)!}\frac{1}{z_1^{p}}\frac{\partial^{r-1}}{\partial z_1^{r-1}}\left(\frac{1}{z_1^{q}(z_1-z_2)}\right) \label{neg-remov-2}
    \end{align}
    in the region $|z_1|>|z_2|>0$. Again there is no change in the order of the pole $z_1=z_2$. 

    If we further remove negative powers of $z_1$ from (\ref{neg-remov-2}), then (\ref{neg-remov-2}) is then a finite sum, i.e., a Laurant polynomial. In this case, $z_1=z_2$ is no longer a pole of the limit of the sum. Thus we proved the based case $n=2$.

    We focus on the case $n\geq 3$. We first study the case when the rational function is of the form 
    \begin{align}
        \frac{1}{(z_1-z_m)\cdots (z_{m-1}-z_m)z_m^{q}(z_m-z_{m+1})\cdots (z_m-z_n)}\label{neg-remov-3}
    \end{align}
    for some integer $n\geq 3, 1\leq m \leq n$. 
    \begin{itemize}[leftmargin=*]
        \item If $m=1$, then (\ref{neg-remov-3}) is of the form
        $$\frac{1}{z_1^{q}(z_1-z_{2})\cdots (z_1-z_n)}$$
        In the region $|z_1|>\cdots > |z_n|>0$, the rational function is expanded as
        $$\sum_{i_2, ..., i_n=0}^\infty z_1^{-q-i_2-1\cdots - i_n-1}z_2^{i_2}\cdots  z_{n}^{i_n}=\sum_{i_2, ..., i_n=0}^\infty z_1^{-n+1-q-i_2-\cdots - i_n}z_2^{i_2} \cdots z_{n}^{i_n}. $$
        Clearly, there is no negative powers of $z_2, ..., z_n$. So removing negative powers of $z_2, ..., z_n$ does not change the series. Removing negative powers of $z_1$ results in the series
        $$\sum_{\substack{k\geq 0 \\ k+ n+q-1\leq 0}} z_1^{-n+1-q-k}\sum_{i_2 + \cdots + i_n = k} z_2^{i_2}\cdots z_n^{i_n} $$
        that is a finite sum. In this case, $z_1=z_2, ..., z_1=z_n$ are no longer poles of the rational function. 
        \item If $m=2$, then (\ref{neg-remov-3}) is of the form 
        $$\frac{1}{(z_1-z_2)z_2^{q}(z_2-z_{3})\cdots (z_2-z_n)}.$$
        In the region $|z_1|>\cdots > |z_n|>0$, the rational function is expanded as
        $$\sum_{i_1, i_3, ..., i_n=0}^\infty z_1^{-i_1-1} z_2^{i_1-q-i_3-1-\cdots -i_n-1} z_3^{i_3} \cdots z_n^{i_n}=\sum_{i_1, i_3, ..., i_n=0}^\infty z_1^{-i_1-1} z_2^{i_1-q-i_3-\cdots -i_n-(n-2)} z_3^{i_3} \cdots z_n^{i_n}$$
        Clearly the series does not contain any negative powers of $z_3, ..., z_n$. Removing negative powers of $z_2, ... ,z_n$ does not change the series. Removing the negative powers of $z_2$ results in the series
        \begin{align*}
            \sum_{k\geq 0} \sum_{i_1=q+k+(n-2)}^\infty z_1^{-i_1-1} z_2^{i_1-q-k-(n-2)} \sum_{i_3+\cdots+ i_n=k}  z_3^{i_3} \cdots z_n^{i_n}=  \frac{1}{(z_1-z_2)z_1^{q}(z_1-z_3)\cdots (z_1-z_n)}
        \end{align*}
        in the region $|z_1|>\cdots >|z_n|>0$. Thus there is no change for the sum of the order of the poles $z_i=z_j$. So the claim is proved for $m=2$. If we further remove the negative powers of $z_1$, we may repeat the discussion of the $m=1$ case. 
        \item Consider first the case $3\leq m \leq n$. In the region $|z_1|>\cdots > |z_n|>0$, (\ref{neg-remov-3}) is expanded as
    $$\sum_{i_1, ...,i_{m-1}, i_{m+1}, ..., i_n=0}^\infty z_1^{-i_1-1}\cdots z_{m-1}^{-i_{m-1}-1} z_m^{i_1+\cdots+i_{m-1}    - 1-i_{m+1} -\cdots - 1-i_n-q}z_{m+1}^{i_{m+1}}\cdots z_{n}^{i_n}. $$
    Clearly there is no negative powers of $z_{m+1}, ..., z_n$. Removing all the negative powers of $z_m$ changes the series into the form
    \begin{align}
        & \sum_{\substack{i_1+\cdots + i_{m-1} \geq (n-m+q) + i_{m+1} + \cdots + i_n\\i_1, ...,i_{m-1}, i_{m+1}, ..., i_n\in \N}} z_1^{-i_1-1}\cdots z_{m-1}^{-i_{m-1}-1} z_m^{i_1+\cdots+i_{m-1} - (n-m+q)-i_{m+1} -\cdots -i_n}z_{m+1}^{i_{m+1}}\cdots z_{n}^{i_n}\nonumber\\
        = \ & \sum_{\substack{k\geq (n-m+q)+i_{m+1}+\cdots + i_{n}\\i_{m+1}, ..., i_n \in \N}}\left(\sum_{\substack{i_1+\cdots + i_{m-1}=k\\ i_1, ..., i_{m-1}\in \N}}z_1^{-i_1-1}\cdots z_{m-1}^{-i_{m-1}-1}\right) z_m^{k-(n-m+q)-i_{m+1}-\cdots - i_n} z_{m+1}^{i_{m+1}}\cdots z_n^{i_n} \label{negative-removal-3}
    \end{align}
    We focus on the inner sum. 
    \begin{align}
        & \sum_{\substack{i_1+\cdots + i_{m-1}=k\\ i_1, ..., i_{m-1}\in \N}} z_1^{-i_1-1}\cdots z_{m-2}^{-i_{m-2}-1}z_{m-1}^{-i_{m-1}-1}\nonumber\\
        = \ & \sum_{i_1=0}^k \sum_{i_2=0}^{k-i_1}\cdots \sum_{i_{m-3}=0}^{k-i_1-\cdots -i_{m-4}}\sum_{i_{m-2}=0}^{k-i_1-\cdots -i_{m-3}} z_1^{-i_1-1}\cdots z_{m-2}^{-i_{m-2}-1}z_{m-1}^{-k+i_1+\cdots + i_{m-2}-1 }\nonumber\\
        = \ & \sum_{i_1=0}^k \sum_{i_2=0}^{k-i_1}\cdots \sum_{i_{m-3}=0}^{k-i_1-\cdots -i_{m-4}} z_1^{-i_1-1}\cdots z_{m-3}^{-i_{m-3}-1}z_{m-1}^{-k+i_1+\cdots + i_{m-3}-1 }\sum_{i_{m-2}=0}^{k-i_1-\cdots -i_{m-3}}  z_{m-2}^{-i_{m-2}-1}z_{m-1}^{ i_{m-2} }\label{negative-removal-4}
    \end{align}
    The summation of $i_{m-2}$ yields
    \begin{align*}
        & z_{m-2}^{-1}\frac{1-(z_{m-2}^{-1}z_{m-1})^{k-i_1-\cdots -i_{m-3}+1}}{1-z_{m-2}^{-1}z_{m-1}}\\
        & \frac{1-z_{m-2}^{-k+i_1+\cdots +i_{m-3}-1}z_{m-1}^{k-i_1-\cdots -i_{m-3}+1}}{z_{m-2}-z_{m-1}}\\
    \end{align*}
    Thus (\ref{negative-removal-4}) becomes
    \begin{align*}
        & \sum_{i_1=0}^k \sum_{i_2=0}^{k-i_1}\cdots \sum_{i_{m-3}=0}^{k-i_1-\cdots -i_{m-4}} z_1^{-i_1-1}\cdots z_{m-3}^{-i_{m-3}-1}\frac{z_{m-1}^{-k+i_1+\cdots + i_{m-3}-1 }-z_{m-2}^{-k+i_1+\cdots +i_{m-3}-1}}{z_{m-2}-z_{m-1}}\\
        \ = & \frac{1}{z_{m-2}-z_{m-1}}\sum_{i_1=0}^k \sum_{i_2=0}^{k-i_1}\cdots \sum_{i_{m-3}=0}^{k-i_1-\cdots -i_{m-4}} z_1^{-i_1-1}\cdots z_{m-3}^{-i_{m-3}-1}z_{m-1}^{-k+i_1+\cdots + i_{m-3}-1 }\\
        & - \frac{1}{z_{m-2}-z_{m-1}}\sum_{i_1=0}^k \sum_{i_2=0}^{k-i_1}\cdots \sum_{i_{m-3}=0}^{k-i_1-\cdots -i_{m-4}} z_1^{-i_1-1}\cdots z_{m-3}^{-i_{m-3}-1}z_{m-2}^{-k+i_1+\cdots +i_{m-3}-1}\\
        \ = & \frac{1}{z_{m-2}-z_{m-1}}\sum_{i_1+\cdots + i_{m-3}+ i_{m-1}=k}z_1^{-i_1-1}\cdots z_{m-3}^{-i_{m-3}-1}z_{m-1}^{-i_{m-1}-1 }\\
        & - \frac{1}{z_{m-2}-z_{m-1}}\sum_{i_1+\cdots + i_{m-3}+ i_{m-2}=k} z_1^{-i_1-1}\cdots z_{m-3}^{-i_{m-3}-1}z_{m-2}^{-i_{m-2}-1}
    \end{align*}
    Therefore, (\ref{negative-removal-3}) is the difference of 
    \begin{align}
        \frac{1}{z_{m-2}-z_{m-1}} \sum_{\substack{k\geq (n-m+q)+i_{m+1}+\cdots + i_{n}\\i_{m+1}, ..., i_n \in \N}}& \left(\sum_{\substack{i_1+\cdots + i_{m-3}+i_{m-1}=k\\ i_1, ..., i_{m-3}, i_{m-1}\in \N}}z_1^{-i_1-1}\cdots z_{m-3}^{-i_{m-3}-1}z_{m-1}^{-i_{m-1}-1}\right) \nonumber\\
        & \cdot z_m^{k-(n-m+q)-i_{m+1}-\cdots - i_n} z_{m+1}^{i_{m+1}}\cdots z_n^{i_n} \label{negative-removal-4-1}
    \end{align}
    and 
    \begin{align}
        \frac{1}{z_{m-2}-z_{m-1}} \sum_{\substack{k\geq (n-m+q)+i_{m+1}+\cdots + i_{n}\\i_{m+1}, ..., i_n \in \N}}& \left(\sum_{\substack{i_1+\cdots + i_{m-3}+i_{m-2}=k\\ i_1, ..., i_{m-3}, i_{m-2}\in \N}}z_1^{-i_1-1}\cdots z_{m-3}^{-i_{m-3}-1}z_{m-2}^{-i_{m-2}-1}\right) \nonumber\\
        & \cdot z_m^{k-(n-m+q)-i_{m+1}-\cdots - i_n} z_{m+1}^{i_{m+1}}\cdots z_n^{i_n} \label{negative-removal-5}
    \end{align}
    (\ref{negative-removal-4-1}) is precisely obtained by removing the negative powers of $z_m$ from the expansion of the rational function 
    $$\frac{1}{z_{m-2}-z_{m-1}} \cdot \frac{1}{(z_1-z_m)\cdots (z_{m-3}-z_m)(z_{m-1}-z_m)z_m^q(z_m-z_{m+1})\cdots (z_m-z_n)}. $$
    (\ref{negative-removal-5}) is precisely obtained by removing the negative powers of $z_m$ from the expansion of the rational function 
    $$\frac{1}{z_{m-2}-z_{m-1}} \cdot \frac{1}{(z_1-z_m)\cdots (z_{m-3}-z_m)(z_{m-2}-z_m)z_m^q(z_m-z_{m+1})\cdots (z_m-z_n)}. $$
    The induction hypothesis applies to conclude the proof of this case. 
    \end{itemize}
    So the claim is proved if the rational function is of the form (\ref{neg-remov-3}).

    Now we study a slightly more general case when the rational function is of the form
    \begin{align}
        \frac{1}{(z_1-z_m)^{P_{1}}\cdots (z_{m-1}-z_m)^{P_{m-1}} z_m^q (z_{m}-z_{m+1})^{P_{m+1}}\cdots (z_m-z_n)^{P_n} }.  \label{negative-removal-5-1}
    \end{align}
    The expansion of (\ref{negative-removal-5-1}) can be expressed as 
    \begin{align*}
        & \frac{(-1)^{P_1-1+\cdots + P_{m-1}-1}}{\prod\limits_{1\leq i \leq n, i\neq m}(P_i-1)!}\prod_{1\leq i \leq n, i\neq m}\left(\frac{\partial^{P_i-1}}{\partial z_i^{P_i-1}}\right)\\
        &\cdot \sum_{i_1, ...,i_{m-1}, i_{m+1}, ..., i_n=0}^\infty z_1^{-i_1-1}\cdots z_{m-1}^{-i_{m-1}-1} z_m^{i_1+\cdots+i_{m-1}    - 1-i_{m+1} -\cdots - 1-i_n-q}z_{m+1}^{i_{m+1}}\cdots z_{n}^{i_n}.     
    \end{align*}
    Removing the negative powers of $z_m$ results in the series
    \begin{align*}
        & \frac{(-1)^{P_1-1+\cdots + P_{m-1}-1}}{\prod\limits_{1\leq i \leq n, i\neq m}(P_i-1)!}\prod_{1\leq i \leq n, i\neq m}\left(\frac{\partial^{P_i-1}}{\partial z_i^{P_i-1}}\right)\\
        &\cdot  \sum_{\substack{i_1+\cdots + i_{m-1} \geq (n-m+q) + i_{m+1} + \cdots + i_n\\i_1, ...,i_{m-1}, i_{m+1}, ..., i_n\in \N}} z_1^{-i_1-1}\cdots z_{m-1}^{-i_{m-1}-1} z_m^{i_1+\cdots+i_{m-1}    - 1-i_{m+1} -\cdots - 1-i_n-q}z_{m+1}^{i_{m+1}}\cdots z_{n}^{i_n}.     
    \end{align*}
    From the product rule, each partial derivative acting on a rational function results in a sum of rational functions whose denominator has total degree increased by exactly 1. The conclusion then directly follows. 
    
    Now we study the most general case. Without loss of generality, we may consider the case when the numerator of (\ref{negative-removal-2}) is a monomial in $z_1, ..., z_n$. Then we argue by reverse induction. The base case $m=n$ easily follows from the discussion of (\ref{negative-removal-5-1}). Assume the conclusion holds for all larger $m$. Without loss of generality, we may assume that negative powers of $z_{m+1}, ..., z_n$ are all removed. Thus, (\ref{negative-removal-2}) is of the form 
    \begin{align}
        \prod_{i=1}^{m-1}\frac 1 {z_i^{Q_i}}\prod_{i=m+1}^n z_i^{Q_i}\prod_{1\leq i < j \leq m-1}\frac 1 {(z_i-z_j)^{P_{ij}}}\cdot \frac 1 {z_m^{Q_m}\prod\limits_{i=1}^{m-1}(z_i- z_m)^{P_{im}}\prod\limits_{j=m+1}^n (z_m-z_j)^{P_{mj}}} \label{negative-removal-6}
    \end{align}
    where $Q_1, ..., Q_m\in \Z, Q_{m+1}, ..., Q_n\geq 0$. Clearly, in the expansion of (\ref{negative-removal-6}) in the region $|z_1|>\cdots > |z_n|$, only the expansion of the last fraction involves $z_m$, which is precisely of the form (\ref{negative-removal-5-1}) that has been handled. Thus follows the conclusion. 
\end{proof}

\begin{prop}\label{omega-convergence}
For every $l_1, l_2\in \N$, every $v_1, ..., v_m\in V$, every $w'\in W'$ and $w_1\in W_1$, the series 
$$\langle w', Y_W(\omega, z_1)\cdots Y_W(\omega, z_{l_1})\pi_{W_2} Y_W(\omega, z_{l_1+1})\cdots Y_W(\omega, z_{l_1+l_2})w_1\rangle $$
converges absolutely in the region 
$$|z_1|>\cdots > |z_{l_1}|> |z_{l_1+1}|>\cdots > |z_{l_1+l_2}| > 0$$
to a rational function whose only possible poles are at $z_i = 0$ $(i = 1, ..., l_1+l_2)$, and at $z_i = z_j$ $(1\leq i < j \leq l_1+l_2)$. Moreover, the rational function can be expressed as a linear combination of fractions 
\begin{align}
    \frac{h(z_1, ..., z_n)}{\prod\limits_{i=1}^{l_1+l_2}z_i^{P_i}\prod\limits_{1\leq i < j \leq l_1+l_2}(z_i-z_j)^{P_{ij}}}\label{omega-convergence-sum}
\end{align} with $\sum\limits_{1\leq i < j \leq l_1+l_2}P_{ij} \leq 2(l_1+l_2-1)$
\end{prop}

\begin{proof}

Apply Theorem \ref{general-l-thm} and rewrite the polynomial 
$$f(x_{\alpha_1}, ..., x_{\alpha_p}) = \sum_{i_1, ..., i_p} w_{\alpha_1\cdots \alpha_p}^{i_1\cdots i_p}x_{\alpha_1}^{i_1}\cdots x_{\alpha_p}^{i_p}, $$
we know that 
$$Y_W(\omega, x_1) \cdots Y_W(\omega, x_{l_1}) \pi_{W_2} Y_W(\omega, x_{{l_1}+1})\cdots Y_W(\omega, x_{{l_1}+{l_2}})w_1 $$
is a linear combination of series of the form 
\begin{align}
    & Y_W(\omega, x_1) \cdots Y_W(\omega, x_{l_1})\nonumber \\ 
    & \qquad \cdot  Y_W^+(v_{\alpha_1^c}, x_{\alpha_1^c})\cdots Y_W^+(v_{\alpha_{l_2-p}^c}, x_{\alpha_{l_2-p}^c})w_{\alpha_1\cdots\alpha_p}^{i_1\cdots i_p}\iota_{1...n}\left(\frac{x_{\alpha_1}^{i_1}\cdots x_{\alpha_p}^{i_p}}{\prod\limits_{l_1+1\leq i < j \leq l_1+l_2}(x_{i}- x_{j})^{Q_{ij}}}\right) \label{omega-convergence-0}
\end{align}
where $\alpha_1, ..., \alpha_p$ is an increasing sequence in in $\{l_1+1, ..., l_1+l_2\}$ with complement $\alpha_1^c, ..., \alpha_{l_2-p}^c$; $i_1, ..., i_p$ are integers; $w_{\alpha_1\cdots\alpha_p}^{i_1\cdots i_p}$ is an element in $W_2$, and $\sum\limits_{l_1+1\leq i < j \leq l_1+l_2}Q_{ij} \leq 2(p-1)$. We now proceed to show that for 
\begin{align}
    \langle w', Y_W(\omega, z_1) \cdots Y_W(\omega, z_{l_1})Y_W^+(\omega, z_{l_1+1})\cdots Y_W^+(\omega, z_{l_1+l_2-p})w_2\rangle. \label{omega-convergence-reg}
\end{align}
converges absolutely in the region $|z_1|> \cdots > |z_{l_1+q}|>0$ to a rational function with poles at $z_i = 0$ $(i = 1,..., l_1+l_2-p)$ and $z_i = z_j$ $(1\leq i < j \leq l_1+l_2-p)$. Note that
(\ref{omega-convergence-reg}) is the series obtained by removing all the negative powers of $z_{l_1+j}$ $(j=1, ..., l_2-p)$ from the following series 
\begin{align}
    \langle w', Y_W(\omega, z_1) \cdots Y_W(\omega, z_{l_1})Y_W(\omega, z_{l_1+1})\cdots Y_W(\omega, z_{l_1+l_2-p})w_2\rangle. \label{omega-convergence-full}
\end{align}
which is absolutely convergent. Thus (\ref{omega-convergence-reg}) itself is also absolutely convergent. 
Moreover, note that (\ref{omega-convergence-full}) is the expansion of a rational function where for every $1\leq i<j\leq l_1+l_2-p$, negative powers of $z_i-z_j$ are expanded as power series of $z_j$. More precisely, (\ref{omega-convergence-full}) is the expansion of  
\begin{align}
    \frac{g(z_1, ..., z_{l_1+l_2-p})}{\prod\limits_{i=1}^{l_1+l_2-p} z_i^{p_i}\prod\limits_{1\leq i < j \leq l_1+l_2-p}(z_i-z_j)^{R_{ij}}}\label{omega-convergence-full-sum}
\end{align} 
where for each $1\leq i<j\leq l_1+l_2-p$, 
\begin{align}
    \iota_{1...n}\frac 1{(z_i-z_j)^{R_{ij}}} = \sum_{k=0}^{\infty} \binom{-R_{ij}}{k} z_i^{-P_{ij}-k}(-1)^k z_j^k \label{omega-convergence-full-factor}
\end{align}
It also follows from Theorem \ref{general-l-lemma} that (\ref{omega-convergence-full-sum}) is a linear combination of fractions of the same form as (\ref{omega-convergence-full-sum}) with 
$\sum\limits_{1\leq i < j \leq l_1+l_2-p}R_{ij} \leq 2(l_1+l_2-p)$. We use Lemma \ref{negative-removal} to conclude that the series (\ref{omega-convergence-reg}) converges absolutely to a linear combinations of rational functions of the form (\ref{omega-convergence-full-sum}) with $\sum\limits_{1\leq i < j \leq l_1+l_2-p} R_{ij} \leq 2(l_1+l_2-p)$. Therefore, each series of the form (\ref{omega-convergence-0}), evaluated at complex numbers $x_i = z_i$ and paired with elements in $w'$, converges absolutely to a linear combination of rational functions of the form (\ref{omega-convergence-sum}) with 
$$\sum_{1\leq i< j \leq l_1+l_2} P_{ij} = \sum_{1\leq i < j \leq l_1+l_2} (Q_{ij} + R_{ij})  \leq 2(l_1+l_2-p) + 2(p-1) = 2(l_1+l_2-1)$$
(Here we regard $Q_{ij}$ and $R_{ij}$ as zero if the indices are out of the original definition). This conclues the proof of the proposition. 
\end{proof}

\begin{thm}\label{convergence-1-gen-thm}
For every $l_1, l_2\in \N$, every $v_1, ..., v_{l_1+l_2}\in V$, every $w'\in W'$ and $w_1\in W$, the series 
\begin{align}
    \langle w', Y_W(v_1, z_1)\cdots Y_W(v_{l_1}, z_{l_1})\pi_{W_2} Y_W(v_{l_1+1}, z_{l_1+1})\cdots Y_W(v_{l_1+l_2}, z_{l_1+l_2})w_1\rangle 
\end{align}
converges absolutely in the region 
\begin{align}
    |z_1|>\cdots > |z_{l_1}|> |z_{l_1+1}|>\cdots > |z_{l_1+l_2}| > 0 \label{convergence-1-gen-target-region}
\end{align}
to a rational function that is a linear combination of rational functions of the form 
$$\frac{h(z_1, ..., z_{l_1+l_2})}{\prod\limits_{i=1}^{l_1+l_2} z_i^{P_i} \prod\limits_{1\leq i < j \leq l_1+l_2}(z_i - z_j)^{P_{ij}} }$$
with $\sum\limits_{1\leq i < j \leq l_1+l_2}P_{ij} \leq \wt(v_1) + \cdots + \wt(v_{l_1+l_2})-2$. 
\end{thm}

\begin{proof}
It suffices to focus on the case where 
$$v_{i} = L(-n_1^{(i)})\cdots L(-n_{s_i}^{(i)})\one.$$
We will argue by induction of $i$ (from $l_1+l_2$ to $l_1+1$) and $s_i$. For brevity, we only show the inductive step with $i\geq l_1+1$ here: suppose the conclusion holds for the series of the form
\begin{align}
    \langle w',  Y_W(\omega, z_1)\cdots Y_W(\omega, z_{l_1})\pi_{W_2} & Y_W(\omega, z_{l_1+1})\cdots Y_W(\omega, z_{l_1+i-1})\nonumber \\
    & \cdot Y_W(v_{l_1+i}, z_{l_1+i})Y_W(v_{l_1+l_2}, z_{l_1+l_2})w_1\rangle, 
\end{align}
then it also holds for the series of the form
\begin{align}
    \langle w',  Y_W(\omega, z_1)\cdots Y_W(\omega, z_{l_1})\pi_{W_2} & Y_W(\omega, z_{l_1+1})\cdots Y_W(\omega, z_{l_1+i-1})\nonumber \\
    & \cdot Y_W(L(-n)v_{l_1+i}, z_{l_1+i})Y_W(v_{l_1+l_2}, z_{l_1+l_2})w_1\rangle,\label{convergence-1-gen-target }
\end{align}
for every $n\geq 2$. The idea is to use the iterate formula 
\begin{align*}
    & Y_W(L(-n)v_{l_1+i}, z_{l_1+i})\\
    & = \Res_{z}\left( \iota_{z,z_{l_1+i}}(z-z_{l_1+i})^{-n-1} Y_W(\omega,z)Y(v_{l_1+i}, z_{l_1+i}) - \iota_{z_{l_1+i},z}(-z_{l_1+i}+z)^{-n-1} Y(v_{l_1+i}, z_{l_1+i})Y_W(\omega,z)\right)
\end{align*}

From the induction hypothesis, we know that the series 
\begin{align}
    \iota_{z,z_{l_1+i}}(z-z_{l_1+i})^{-n-1}\langle w',  Y_W(\omega, z_1)\cdots Y_W(\omega, z_{l_1})\pi_{W_2} & Y_W(\omega, z_{l_1+1})\cdots Y_W(\omega, z_{l_1+i-1})\nonumber \\
    & \cdot Y_W(\omega, z)Y_W(v_{l_1+i}, z_{l_1+i})\cdots Y_W(v_{l_1+l_2}, z_{l_1+l_2})w_1\rangle
    \label{convergence-1-gen-line-1}
\end{align}
converges absolutely in the region 
$$|z_1|>\cdots > |z_{l_1+i-1}| > |z| > |z_{l_1+i}| > \cdots > |z_{l_1+l_2}|> 0$$
to a linear combination of rational functions of the form
$$f_{i,1}(z_1, ..., z_{l_1+l_2}, z)=\frac{h_{i,1}(z_1, ..., z_{l_1+l_2}, z)}{z^p \prod\limits_{s=1}^{l_1+l_2}z_s^{p_s}(z-z_s)^{P_s}\prod\limits_{1\leq s < t \leq l_1+l_2}(z_s-z_t)^{P_{st}} (z-z_{l_1+i})^{n+1}}, $$ 
where $h_{i,1}$ is a polynomial function, and $$\sum\limits_{s=1}^{l_1+l_2}P_s + \sum\limits_{1\leq s < t \leq l_1+l_2} P_{st} \leq 2(l_1+i) + \wt(v_{l_1+i}) + \cdots + \wt(v_{l_1+l_2}) - 2. $$ 
Applying $\Res_z$ to (\ref{convergence-1-gen-line-1}) amounts to integrating $f_{i,1}(z_1, ..., z_{l_1+l_2}, z)$ along the curve $z= re^{i\theta}, \theta\in [0, 2\pi]$ with $ |z_{l_1+i-1}|>r>|z_{l_1+i}|$. From Cauchy integral theorem, $\Res_z$ of (\ref{convergence-1-gen-line-1}) converges absolutely in the region 
$$|z_1|>\cdots > |z_{l_1+i-1}| > |z_{l_1+i}| > \cdots > |z_{l_1+l_2}|> 0$$
(which is precisely the region (\ref{convergence-1-gen-target-region})) to the function
\begin{align}
    g_{i,1}(z_1, ..., z_{l_1+l_2}) &=\oint_{z=re^{i\theta}}f_{i,1}(z_1, ..., z_{l_1+l_2}, z)dz\\ &= \sum_{j=i}^{l_2}\oint_{C(z_{l_1+j})}f_{i,1}(z_1, ..., z_{l_1+l_2}, z)dz \nonumber\\
    &= \sum_{j=i}^{l_2} \Res_{z=z_{l_1+j}}f_{i,1}(z_1, ..., z_{l_1+l_2}, z)dz \label{convergence-1-gen-line-2}
\end{align}
where for each index $j$ in the sum, $\Res_{z=z_{l_1+j}}f_{i,1}(z_1, ..., z_{l_1+l_2}, z)$ is the coefficient of $(z-z_{l_1+j})^{-1}$ in the expansion of the rational function $f_{i,1}(z_1, ..., z_{l_1+l_2}, z)$ where 
\begin{itemize}
    \item negative powers of $z = z_{l_1+j} + (z-z_{l_1+j})$ are expanded as a power series in $(z-z_{l_1+j})$. 
    \item negative powers of $z-z_s = (-z_s + z_{l_1+j}) + (z-z_{l_1+j})$ are expanded as a power series in $(z-z_{l_1+j})$, for every $i\neq l_1+j$.  
\end{itemize}
So each summand in (\ref{convergence-1-gen-line-2}) is a linear combination of rational functions of the form 
$$\frac{h_{i,1,j}(z_1, ..., z_{l_1+l_2}, z)}{z_{l_1+j}^{p_1} \prod\limits_{s=1}^{l_1+l_2}z_s^{p_s}(z_{l_1+j}-z_s)^{P_{s, 1}}\prod\limits_{1\leq s < t \leq l_1+l_2}(z_s-z_t)^{P_{st}} (z_{l_1+j}-z_{l_1+i})^{n_1}}, $$ 
with 
\begin{align*}
    & p_1+\sum_{s=1}^{l_1+l_2}P_{s,1}+n_1+1=p+\sum_{s=1}^{l_1+l_2} P_s+\sum_{s=1}^{l_1+l_2}+n+1\\
    \ \Rightarrow &  \sum_{s=1}^{l_1+l_2}P_{s,1}+n_1=p-p_1+\sum_{s=1}^{l_1+l_2}P_s+n\leq \sum_{s=1}^{l_1+l_2}P_s+n
\end{align*}
Thus 
\begin{align*}
    \sum_{s=1}^{l_1+l_2}P_{s,1} + \sum_{1\leq s < t \leq l_1+l_2}P_{st} + n_1 & \leq \sum_{1\leq s < t \leq l_1+l_2}P_{st} + \sum_{s=1}^{l_1+l_2}P_s + n \\
    & = 2(l_1+i) + \wt(v_{l_1+i}) + \cdots + \wt(v_{l_1+l_2}) - 2 + n\\
    & = 2(l_1+i) + \wt(L(-n)v_{l_1+i}) + \cdots + \wt(v_{l_1+l_2}) - 2 
\end{align*}
Using an almost identical argument, we see that $\Res_z$ of the series
\begin{align}
    \iota_{z_{l_1+i},z}(-z_{l_1+i}+z)^{-n-1}\langle w',  Y_W(\omega, z_1)\cdots Y_W(\omega, z_{l_1})\pi_{W_2} & Y_W(\omega, z_{l_1+1})\cdots Y_W(\omega, z_{l_1+i-1})\nonumber \\
    & \cdot Y_W(v_{l_1+i}, z_{l_1+i})Y_W(\omega, z)\cdots Y_W(v_{l_1+l_2}, z_{l_1+l_2})w\rangle \label{convergence-1-gen-line-3}
\end{align}
converges in the same region to a linear combination of rational function $g_2(z_1, ..., z_{l_1+l_2})$ with the same structures of poles and the same estimate of sum orders as in $g_1(z_1, ..., z_{l_1+l_2})$. Using the iterate formula, we see that the same holds for the series (\ref{convergence-1-gen-target }). The inductive step is proved. Using the identical procedure, we may pass all the $\omega$'s before $\pi_{W_2}$ to generic elements in $V$. Details shall not be repeated here. 
\end{proof}

\begin{cor}
For every $v\in V$, the map $\pi_{W_2}Y_W(v, x)\pi_{W_1}: W_1 \to W_2((x))$ 
is in $\H_2(W_1, W_2)$. 
\end{cor}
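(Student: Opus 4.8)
The plan is to verify directly, one at a time, the four defining conditions of $\H_N(W_1,W_2)$ listed in Proposition-Definition \ref{Def-H_N} for $\phi:=\pi_{W_2}Y_W(v,\zeta)\pi_{W_1}$. Conditions (1) and (2) are routine: since $W_2$ is a submodule and $W_1$ is spanned by $L(0)$-homogeneous vectors, the decomposition $W=W_1\amalg W_2$ is compatible with the grading of $W$ and $Y_W$ carries $W_2$ into $\overline{W_2}$, so the $D$-derivative and $\mathbf d$-conjugation properties of $\phi$ (with $\wt\phi=\wt v$ for homogeneous $v$) follow from the $D$-commutator and $\mathbf d$-commutator formulas on $W$ together with the module structure $D_{W_1}=\pi_{W_1}D_W|_{W_1}$ on $W_1$; moreover $\phi(\zeta)w_1$ is a $W_2$-valued Laurent polynomial in $\zeta$ for each $w_1\in W_1$ — the case $v=\omega$ being Proposition \ref{base-case} and Theorem \ref{general-l-thm}, and the general $v\in V=V(c,0)$ following by the iterate formula exactly as in the proof of Theorem \ref{convergence-1-gen-thm} — so $\phi\in\Hom_\C(W_1,\widehat{(W_2)}_\zeta)$.

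The real content is Condition (3), the composability condition, and Condition (4), the $N$-weight-degree condition. For (3) the idea is to reduce to Theorem \ref{convergence-1-gen-thm}. Because $W_2$ is a submodule, $Y_{W_2}(u,x)$ is the restriction of $Y_W(u,x)$ and $\pi_{W_1}Y_W(u,x)\pi_{W_2}=0$; a short induction then shows, for $w_1\in W_1$,
$$Y_{W_1}(u_{k+1},z_{k+1})\cdots Y_{W_1}(u_{k+l},z_{k+l})w_1=\pi_{W_1}\bigl(Y_W(u_{k+1},z_{k+1})\cdots Y_W(u_{k+l},z_{k+l})w_1\bigr).$$
Substituting this and using $\pi_{W_1}=1-\pi_{W_2}$ together with $\pi_{W_2}Y_W(v,\zeta)\pi_{W_2}=Y_W(v,\zeta)\pi_{W_2}$, the series appearing in Condition (3) of Proposition-Definition \ref{Def-H_N} becomes the difference of
$$\langle w_2',Y_W(u_1,z_1)\cdots Y_W(u_k,z_k)\,\pi_{W_2}\,Y_W(v,\zeta)Y_W(u_{k+1},z_{k+1})\cdots Y_W(u_{k+l},z_{k+l})w_1\rangle$$
and
$$\langle w_2',Y_W(u_1,z_1)\cdots Y_W(u_k,z_k)\,Y_W(v,\zeta)\,\pi_{W_2}\,Y_W(u_{k+1},z_{k+1})\cdots Y_W(u_{k+l},z_{k+l})w_1\rangle,$$
each of which is exactly of the shape handled by Theorem \ref{convergence-1-gen-thm}: one insertion of $\pi_{W_2}$, all other factors genuine operators $Y_W$, $w_1\in W_1\subseteq W$, and the required region of absolute convergence matching $|z_1|>\cdots>|z_k|>|\zeta|>|z_{k+1}|>\cdots>|z_{k+l}|>0$. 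Hence the composability series converges absolutely in that region to a rational function whose only possible poles lie at $z_i=0$, $\zeta=0$, $z_i=z_j$ and $z_i=\zeta$. The bounds on the orders of these poles by constants depending only on the pairs $(u_i,w_1)$, $(\phi,w_1)$, $(u_i,u_j)$ and $(u_i,\phi)$ follow, as in the standard theory, from the weak associativity with pole-order condition for $Y_W$.

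For the $N$-weight-degree condition (4) I would argue by tracking the weight grading. The projection $\pi_{W_2}$ forces its output into $W_2=M(c,h_1)$, whose $L(0)$-eigenvalues all lie in $h_1+\N=h+N+\N$, whereas the grading of $W$ is bounded below by $h$; thus $\phi(\zeta)$ raises weight by at least $N$ more than the bare vertex operator $Y_W(v,\zeta)$ of weight $\wt v$ does on the lowest graded piece, and translating this weight estimate into a lower bound on the total degree of each monomial in the variables $z_i-z_{k+l}$ and $\zeta-z_{k+l}$ — via the $\mathbf d$-conjugation property, i.e., the quasi-homogeneity of the correlation function under simultaneous rescaling of all the variables — yields exactly the bound $N-\sum_i\wt u_i-\wt\phi$. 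Alternatively, for $v=\omega$ this can be read off directly from the explicit expression in Theorem \ref{general-l-thm}, where the $W_2$-valued Laurent polynomials $f_{\alpha_1\cdots\alpha_p}$ have all coefficients of weight $\geq h_1$, and the general $v$ reduces to $v=\omega$ by the iterate formula as before. The main obstacle is carrying out this weight-to-degree translation cleanly in the presence of the auxiliary operators $Y_{W_2}(u_i,z_i)$ and $Y_{W_1}(u_j,z_j)$; once it is in place, all four conditions of Proposition-Definition \ref{Def-H_N} hold, and therefore $\pi_{W_2}Y_W(v,x)\pi_{W_1}\in\H_N(W_1,W_2)$.
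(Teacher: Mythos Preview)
Your proposal is correct and follows the same overall strategy as the paper: reduce Condition (3) to Theorem \ref{convergence-1-gen-thm} by manipulating the projections, and handle Condition (4) by a comparison with the projection-free correlation function. Your treatment of (3) is in fact slightly cleaner than the paper's: you observe once and for all, via the identity $Y_{W_1}(u_{k+1},z_{k+1})\cdots Y_{W_1}(u_{k+l},z_{k+l})w_1=\pi_{W_1}\bigl(Y_W(u_{k+1},z_{k+1})\cdots Y_W(u_{k+l},z_{k+l})w_1\bigr)$ (which holds because $\pi_{W_1}Y_W\pi_{W_2}=0$), that the composability series is a difference of two series each carrying a single $\pi_{W_2}$, whereas the paper inserts the $\pi_{W_1}$'s one at a time, at each step splitting off a term with an extra $\pi_{W_2}$ and invoking Theorem \ref{convergence-1-gen-thm} again.

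For Condition (4) the paper's argument is more direct than the weight-to-degree translation you propose: it simply notes that the $N$-weight-degree condition already holds for the projection-free series by associativity of vertex operators, and then observes from the explicit form in Theorem \ref{general-l-thm} that inserting $\pi_{W_2}$ and $\pi_{W_1}$ can only \emph{decrease} the orders of the poles of the resulting rational function, hence cannot lower the minimal total degree in the shifted variables. This sidesteps the ``obstacle'' you flag and is worth adopting in place of your homogeneity argument.
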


\begin{proof}
Clearly from Theorem \ref{convergence-1-gen-thm}, for every $k,l\in \N$, every $u_1, ..., u_{k+l}\in V$, every $w_2'\in W_2', w_1\in W_1$,
    \begin{align}
        \langle w_2', Y_{W_2}(u_1, z_1)\cdots Y_{W_2}(u_k, z_k)\pi_{W_2}Y_W(v,z)  Y_W(u_{k+1}, z_{k+1})\cdots Y_W(u_{k+l}, z_{k+l})w_1\rangle \label{composability-1-gen-line-1}
    \end{align}
    converges absolutely in the region 
    $$|z_1|> \cdots > |z_k| > |z| > |z_{k+1}| > \cdots > |z_{k+l}| > 0$$
    to a linear combination of rational functions that is of the form 
    \begin{align}
        \frac{h(z_1, ..., z_k, z, z_{k+1}, ...,  z_{k+l})}{z^p\prod\limits_{i=1}^{k+l} z_i^{p_i}(z_i-z)^{P_i} \prod\limits_{1\leq i < j \leq k+l}(z_i - z_j)^{P_{ij}} }\label{composability-1-gen-sum}   
    \end{align}
    with $\sum\limits_{i=1}^{k+l}P_i + \sum\limits_{1\leq i < j \leq k+l}P_{ij} \leq \sum\limits_{i=1}^{k+l}\wt(u_i) + \wt(v) - 2$. To squeeze $\pi_{W_1}$ in between, we start by rewriting (\ref{composability-1-gen-line-1}) as 
    \begin{align}
        & \langle w_2', Y_{W_2}(u_1, z_1)\cdots Y_{W_2}(u_k, z_k)\pi_{W_2}Y_W(v,z)  \pi_{W_1}Y_W(u_{k+1}, z_{k+1})\cdots Y_W(u_{k+l}, z_{k+l})w_1\rangle \label{composability-1-gen-line-2}\\
        & + \langle w_2', Y_{W_2}(u_1, z_1)\cdots Y_{W_2}(u_k, z_k)\pi_{W_2}Y_W(v,z)  \pi_{W_2}Y_W(u_{k+1}, z_{k+1})\cdots Y_W(u_{k+l}, z_{k+l})w_1\rangle \label{composability-1-gen-line-3}
    \end{align}
    Note that since $W_2$ is a submodule, $\pi_{W_2}Y_W(v, z) \pi_{W_2} = Y_{W_2}(v,z)\pi_{W_2}$. Thus the convergence of (\ref{composability-1-gen-line-3}) follows from Theorem \ref{convergence-1-gen-thm}. Therefore  (\ref{composability-1-gen-line-2}) converges. We can repeat the same process consecutively to squeezing $\pi_{W_1}$ between the vertex operators and show convergence. Details should not be repeated here. 
    
    To verify the $N$-weight-degree condition with $N=2$, we start with the observation that expanding each rational function (\ref{composability-1-gen-sum}) 
    as a Laurent series in $z_1-z_{k+l}, ...,z-z_{k+l}, ..., z_{k+l-1}-z_{k+l}$ with coeffients in $\C[z_{k+l}, z_{k+l}^{-1}]$ amounts to 
    \begin{enumerate}
        \item expanding negative powers of $z= z_{k+l}-(z-z_{k+l})$ as a power series in $(z-z_{k+l})$;
        \item expanding negative powers of $z_i = z_{k+l}-(z_i-z_{k+l})$ as a power series in $(z_i-z_{k+l})$, $i=1, ..., k+l-1$;
        \item expanding negative powers of $z-z_i = -(z_i-z_{k+l})+(z-z_{k+l})$ as a power series in $(z-z_{k+l})$, for $i=1, ..., k$;
        \item expanding negative powers of $z-z_i = z-z_{k+l}-(z_i-z_{k+l})$ as a power series in $(z_i-z_{k+l})$, for $i=k+1, ..., k+l$;
        \item expanding negative powers of $z_i-z_j = z_i-z_{k+l}-(z_j-z_{k+l})$ as a power series in $(z_j-z_{k+l})$, $1\leq i < j \leq k+l$.
    \end{enumerate}
    Clearly, (1) and (2) do not contribute to the lowest total degree of $z_1-z_{k+l}, ..., z_i-z_{k+l}$, while (3), (4) and (5) restricts the sum of the degrees of the two variables to be the same as $-P_i$, $-P_i$ and $-P_{ij}$, respectively. Therefore, the lowest total degree of $z_1-z_{k+l}, ..., z_i-z_{k+l}$ is precisely 
    $$-\sum_{i=1}^{k+l}P_i-\sum_{1\leq i<j \leq k+l}P_{k+l} \geq 2 - \wt(u_1)\cdots - \wt(u_k) - \wt(v) - \wt(u_{k+1}) - \cdots - \wt(u_{k+l}). $$  
\end{proof}

\begin{rema}
For the Verma module $M(c,h)$ with $[c,h]$ in the block (d), we currently do not know how to show the convergence when the submodule is generated by two singular vectors. Some preliminary computation shows that the situation is much more complicated than what we observe here. 
\end{rema}

\subsection{General case}

From Proposition \ref{submodule-quotient}, it suffices that we consider the module $\bigoplus_{i=1}^n M(c, h_i)$. We start by classifying the submodules. 

\begin{prop}\label{submodule-classify}
Let $M_i = M(c,h_i)$ be Verma modules of blocks (a), (b) and (c), $i = 1,..., n$. Let $W$ be a submodule of $M=\bigoplus_{i=1}^n M_i$. Then $W$ is generated by singular vectors. Moreover, there exist submodules $N_i \subseteq M_i$ for each $i=1, ..., n$, such that $W$ is isomorphic to $\bigoplus_{i=1}^n N_i$. 
\end{prop}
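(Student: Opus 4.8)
The plan is to reduce to a single linkage block and then induct on $n$, exploiting that in the blocks of types (a), (b), (c) every Verma module is uniserial. Throughout, the $M_i=M(c,h_i)$ and all their submodules lie in the BGG category $\mathcal O$ for $Vir$, which is closed under submodules and quotients. Since $\Hom(M(c,g),M(c,g'))=0$ and, by item (6) above, $\Ext^1(M(c,g),M(c,g'))=0$ whenever $[c,g]\neq[c,g']$, the block decomposition of $\mathcal O$ yields $W=\bigoplus_{\beta}\bigl(W\cap\bigoplus_{i:\,[c,h_i]=\beta}M_i\bigr)$, so we may assume all $h_i$ lie in one block $\beta$, necessarily of type (a), (b) or (c). In such a $\beta$ the linkage order is a chain, so each $M_i$ is uniserial, with submodule chain $M_i=M_i^{(0)}\supsetneq M_i^{(1)}\supsetneq\cdots$, each $M_i^{(k)}$ generated by one singular vector and (by item (3)) isomorphic to a Verma module; hence every nonzero proper quotient of a Verma module in $\beta$ is a finite-length uniserial module --- simple top, pairwise non-isomorphic factors, endomorphism ring $\C$, submodules forming a chain.

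For the induction, $n=1$ is the uniserial description just recalled. For the step, write $M=M_1\oplus M^{\flat}$ with $M^{\flat}=\bigoplus_{i\ge2}M_i$ and projections $\pi_1,\pi^{\flat}$; put $A=\pi_1(W)\subseteq M_1$ and $B=\pi^{\flat}(W)\subseteq M^{\flat}$. Then $A$ is $0$ or a Verma submodule of $M_1$, and by the inductive hypothesis $B\cong\bigoplus_{i\ge2}B_i$ with $B_i\subseteq M_i$ (each a Verma module or $0$). A Goursat-type argument identifies $W$ with the fibre product $A\times_C B$ over $C:=A/(W\cap M_1)\cong B/(W\cap M^{\flat})$, i.e. $0\to W\to A\oplus B\to C\to0$ with the given surjections $\theta\colon A\twoheadrightarrow C$, $\theta^{\flat}\colon B\twoheadrightarrow C$. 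If $C=0$ then $W=A\oplus B$; if $\theta$ is an isomorphism (equivalently $W\cap M_1=0$) then $W\cong B$; if $\theta^{\flat}$ is an isomorphism then $W\cong A$; in each of these cases the desired $N_i$ are read off at once. In the remaining case $C$ is a nonzero proper quotient of $A$, hence finite-length uniserial.

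The crux is to untangle $A\times_C B$ in this last case. Fixing $\rho\colon B\xrightarrow{\sim}\bigoplus_{i\ge2}B_i$, the images of the summands $B_i$ in $C$ are submodules of the uniserial module $C$, hence totally ordered, and since they sum to $C$ one of them --- say $B_2\twoheadrightarrow C$ after renumbering --- is all of $C$. I would then show $A\times_C\bigl(B_2\oplus\bigoplus_{i\ge3}B_i\bigr)\cong(A\times_C B_2)\oplus\bigoplus_{i\ge3}B_i$ by producing a homomorphism $\sigma\colon\bigoplus_{i\ge3}B_i\to B_2$ lifting the map to $C$ along $B_2\twoheadrightarrow C$: each $B_i$ $(i\ge3)$ mapping nontrivially to $C$ is forced (comparing tops) to be $\cong M(c,g)$ with $g$ the lowest weight of the relevant submodule of $C$, and one takes $\sigma|_{B_i}$ to be a scalar multiple of the canonical Virasoro-module embedding of that Verma module into the Verma module $B_2$ --- here the chain structure of $\beta$ (and the exclusion of the thick type $III_\pm$ block) is exactly what guarantees these embeddings exist and are unique up to scalar. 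It then remains to handle $A\times_C B_2$, an $n=2$ instance: $A$ and $B_2$ are Verma modules both surjecting onto $C$ with Verma kernels, and using $\End(C)=\C$ one matches the two surjections and checks $A\times_C B_2\cong A\oplus\ker(B_2\to C)$, a Verma submodule of $M_1$ together with a Verma submodule of $M_2$. Reassembling gives $W\cong\bigoplus_{i=1}^n N_i$ with $N_i\subseteq M_i$. I expect this untangling --- constructing $\sigma$ and verifying the summands genuinely sit in the prescribed $M_i$ --- to be the main obstacle; the rest is formal once the chain structure of the blocks (a), (b), (c) is available.

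Finally, the first assertion follows: each $N_i$ is $0$ or a Verma module (item (3)), hence generated by a single singular vector, and an isomorphism $W\cong\bigoplus_i N_i$ carries singular vectors to singular vectors, so $W$ is generated by its singular vectors.
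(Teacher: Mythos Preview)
Your argument is correct in the context of blocks (a), (b), (c), which is what Section~5 treats, but it takes a genuinely different route from the paper. The paper inducts on $n$ by splitting off the last summand $M_n$ and works directly with singular vectors: it shows that each generator $u_i$ of $\pi_P W$ either already lies in $W$ or lifts to a singular vector $w_i\in W$, via a case analysis on weights together with uniqueness of singular vectors in a given weight space and irreducible-quotient arguments inside $M_n$. This produces an explicit ``row echelon form'' of generating singular vectors (Remark~\ref{Index-rearrange}), which is then used verbatim in the proof of Theorem~\ref{Virasoro-final-thm}. Your Goursat/fibre-product approach, exploiting that $C$ is uniserial in these blocks, is cleaner and more categorical; the lifting steps (constructing $\sigma$ and resolving $A\times_C B_2$) go through exactly as you outline, since every relevant module has a one-dimensional top-weight space, the required Verma embeddings into $B_2$ exist uniquely up to scalar, and $\End(C)=\C$. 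The trade-off is that your isomorphism $W\cong\bigoplus_i N_i$ is abstract, so recovering the triangular form of generators the paper later needs would require a separate pass. Two small corrections: the phrase ``necessarily of type (a), (b) or (c)'' after the block reduction is an ambient hypothesis on the $h_i$ inherited from this section, not a consequence of the reduction itself; and your ``item (6)'' is the paper's item (5).
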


\begin{proof}
The idea of the proof is suggested by Kenji Iohara. Reviewer 1 contributed the following argument that is a significant simplification. 

The conclusion clearly holds if $n=1$. Let $\pi_i: M \to M_i$ be the canonical projection, $i =1, ..., n$. Without loss of generality, we rearrange the indices, such that 
$$\pi_1 W = (\pi_1 + \pi_2)W \cdots = \left(\sum_{i=1}^{i_0}\pi_{i}\right) W = 0, \pi_{j}W\neq 0, j>i_0.$$
Then, we compare the lowest weights of $\pi_jW$ for $j=i_0+1, ..., n$. We rearrange the indices so that
$$\pi_{i_0+1}W \text{ is of minimal lowest weight}. $$
Clearly, $P = \left(\bigoplus\limits_{i>i_0+1} M_i\right) \cap W$ is the kernel of $\pi_{i_0+1}|_W: W\to \pi_{i_0+1} W$, and the sequence
\begin{align}
    0 \to P \to W \to \pi_{i_0+1} W \to 0 \label{submodule-classify-1}
\end{align}
is exact. As a submodule of $M_{i_0+1}$, $\pi_{i_0+1}W$ is generated by a singular vector $u$. Let $w\in W$ such that $\pi_{i_0+1}w = u$, i.e.,
    $$w = \left(0, ..., 0, u, w^{(i_0+2)}, ..., w^{(n)}\right),$$
    where $w^{(j)}$ are elements in $M_j$ that might be zero or nonzero, $j=i_0+2, ..., n$. By our choice of $i_0$, we know that $w$ is of the lowest conformal weight in $W$. Necessarily, $w$ is a singular vector in $W$. The submodule $\langle w\rangle$ generated by $w$ is isomorphic to $\pi_{i_0+1}W$. Therefore, the map $u\mapsto w$ defines a splitting to the exact sequence (\ref{submodule-classify-1}). If we set $N_{i_0+1} = \pi_{i_0+1}W$, then we have $W\simeq P \oplus \pi_{i_0+1}W$. Since $P$ is a submodule of $\left(\bigoplus\limits_{i>i_0+1} M_i\right)$, we may use the induction hypothesis to conclude the proof. 
\end{proof}

\begin{rema}\label{Index-rearrange}
The proof actually gives a ``row echelon form'' of singular vectors. After locating the index $i_0$ and the singular vector $w$ (which we shall denote by $w_{i_0+1}$) from the proof, we may repeat the process on the submodule $P$ in $\bigoplus\limits_{i>i_0+1}M_i$. At the end of the day, we will obtain singular vectors 
\begin{align*}
    w_{i_0+1} &= {\bigg (}0, ..., 0, & w_{i_0+1}^{(i_0+1)} &, ..., w_{i_0+1}^{(i_1)}, & w_{i_0+1}^{(i_1+1)}, ..., w_{i_0+1}^{(i_2)}& , &...&, & w_{i_0+1}^{(i_{r-1}+1)}, ..., w_{i_0+1}^{(n)}{\bigg )}.\\
    w_{i_1+1} &= {\bigg(}0, ..., 0, & 0 &, ..., 0,&  w_{i_1+1}^{(i_1+1)}, ..., w_{i_1+1}^{(i_2)}&, & ...& , & w_{i_1+1}^{(i_{r-1}+1)}, ..., w_{i_1+1}^{(n)}{\bigg)}.\\
    \vdots & \\
    w_{i_{r-1}+1}&={\bigg(}0, ..., 0, & 0 &, ..., 0, &  0, ..., 0&, & ...& , & w_{i_{r-1}+1}^{(i_{r-1}+1)}, ..., w_{i_{r-1}+1}^{(n)}{\bigg)}.
\end{align*}
with $w_{i_j+1}^{(i_j+1)}\neq 0$ for $i=0, ..., r-1$. This ``row echelon form'' of singular vectors will be useful in the proof of convergence. 
\end{rema}

\begin{thm}\label{Virasoro-final-thm}
Let $M = \bigoplus_{i=1}^n M(c, h_i)$ where each $M(c, h_i)$ is in block (a), (b) or (c). Then $M$ satisfies the condition in Definition \ref{composibility}. 
\end{thm}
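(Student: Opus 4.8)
# Proof Proposal for Theorem \ref{Virasoro-final-thm}

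**Plan overview.** The strategy is to reduce the general case $M = \bigoplus_{i=1}^n M(c,h_i)$ to the single-Verma-module case already established in Section \ref{Section-5-2} (culminating in Theorem \ref{convergence-1-gen-thm} and its corollary). Given a submodule $W_2 \subseteq M$, Proposition \ref{submodule-classify} tells us $W_2 \cong \bigoplus_{i=1}^n N_i$ with each $N_i \subseteq M(c,h_i)$ a submodule, and in the blocks (a), (b), (c) each $N_i$ is either $0$ or a Verma module generated by a single singular vector. The "row echelon form" of singular vectors from Remark \ref{Index-rearrange} is the key organizing tool: it exhibits generators $w_{i_0+1}, \dots, w_{i_{r-1}+1}$ of $W_2$ that are "upper triangular" with respect to the decomposition $M = \bigoplus M(c,h_i)$, so that $W_2$ can be built up one block at a time.

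**Construction of the complement $W_1$.** First I would treat the case of a single generator per nonzero block by using the echelon structure: after reindexing, the generator $w_{i_j+1}$ lies in $\bigoplus_{k > i_j} M(c,h_k)$ with nonzero component in the $(i_j+1)$-st slot. Within each slot $M(c,h_k)$ I would choose the complementary subspace $W_1^{(k)}$ exactly as in Section \ref{Section-5-2}: if $N_k = M(c, h_k + N_k')$ is generated by a singular vector of level $N_k'$, set $W_1^{(k)}$ to be the span of basis vectors $L(-n_1)\cdots L(-n_s)L(-1)^n$ with $n_1 \geq \cdots \geq n_s \geq 2$ and $n < N_k'$. Then define $W_1 = \bigoplus_{k} W_1^{(k)}$ as a vector subspace of $M$, and one checks $M = W_1 \amalg W_2$ using the echelon/triangular structure of the generators (a dimension count on each homogeneous subspace, exactly as in Proposition \ref{submodule-classify}). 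The projection $\pi_{W_2}$ then decomposes compatibly: on each slot, $\pi_{W_2}$ restricts to the single-Verma projection composed with the correction coming from the off-diagonal components of the generators $w_{i_j+1}$.

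**Verifying the composability and $N$-weight-degree conditions.** With $W_1, W_2$ as above, I need: for every $v \in V$ and every $k,l \in \N$, $u_1,\dots,u_{k+l} \in V$, $w_2' \in W_2'$, $w_1 \in W_1$, the series
$$\langle w_2', Y_{W_2}(u_1,z_1)\cdots Y_{W_2}(u_k,z_k)\pi_{W_2}Y_M(v,z)\pi_{W_1}Y_M(u_{k+1},z_{k+1})\cdots Y_M(u_{k+l},z_{k+l})w_1\rangle$$
converges absolutely in the standard region to a rational function with the prescribed poles, satisfying the $N$-weight-degree bound. Since $M = \bigoplus M(c,h_i)$ and all operators act diagonally on this direct sum (the vertex operator of $V$ preserves each $M(c,h_i)$), I can split $w_1$ into its components $w_1^{(k)} \in W_1^{(k)} \subseteq M(c,h_k)$ and analyze each separately. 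The subtlety is that $\pi_{W_2}$ is not block-diagonal — it picks up the off-diagonal contributions from the triangular generators. However, these off-diagonal contributions are governed by the \emph{same} combinatorial estimates: $\pi_{W_2}$ applied to a vector in slot $k$ produces, in the lower slots $k' > i_{j}$, images obtained by applying fixed module elements (coming from $w_{i_j+1}^{(k')}$) followed by the single-Verma projection in slot $k'$. So the convergence and pole structure follow by combining Theorem \ref{convergence-1-gen-thm} applied in finitely many slots. The $N$-weight-degree condition propagates because, as noted in the corollary at the end of Section \ref{Section-5-2}, inserting the projection operators does not increase pole orders, hence does not decrease the minimal total degree in the relevant expansion variables; the echelon correction terms only shift weights by fixed amounts, which can be absorbed into the choice of $N$.

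**Main obstacle.** The hardest part is bookkeeping the interaction between $\pi_{W_2}$ and the off-diagonal (echelon) components of the generators. In the single-Verma case, Proposition \ref{pi-L(-m)-comm} gave the crucial commutation $\pi_{W_2}(L(-m)w) = L(-m)\pi_{W_2}(w)$ for $m \geq 2$, which drove the entire inductive argument in Theorem \ref{general-l-thm}. For the direct sum, I need an analogous statement: $\pi_{W_2}$ commutes with $L(-m)$, $m \geq 2$, on all of $M$. This should hold because $L(-m)$ preserves each slot and, within each slot, the relevant "index" notion is unchanged by the off-diagonal shifts — but making this precise requires extending the definition of $\Ind$ and the standard expression to the direct sum, and re-running Propositions \ref{Ind-Prop}, \ref{pi-L(-m)-comm} and Corollary \ref{finite-sum} in this setting. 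I expect this to be routine but notationally heavy; once it is in place, Theorems \ref{general-l-thm}, \ref{omega-convergence}, \ref{convergence-1-gen-thm} carry over slot-by-slot with only cosmetic changes, and the theorem follows. I would therefore structure the proof as: (1) set up notation for $M = \bigoplus M(c,h_i)$ and the echelon complement; (2) state and prove the direct-sum analogues of the Section \ref{Section-5-2} lemmas, emphasizing only the new points; (3) invoke Theorem \ref{convergence-1-gen-thm} in each slot and assemble; (4) conclude via Proposition \ref{submodule-quotient} that all quotients of finite direct sums of such Verma modules lie in $\CC_N$.
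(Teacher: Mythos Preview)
Your proposal is correct and follows essentially the same approach as the paper: echelon-form generators from Remark \ref{Index-rearrange}, block-diagonal complement $P=\bigoplus_i P_i$ built from the single-Verma complements, and reduction to Theorem \ref{general-l-thm} via the commutation of the projection with $L(-m)$ for $m\geq 2$. The paper streamlines your ``main obstacle'' by factoring the projection as $\pi_W=\rho\circ\pi_N\circ\rho^{-1}$, where $\pi_N=\sum_i\pi_{N_i}$ is the block-diagonal projection and $\rho$ is the linear automorphism of $M$ that fixes $P$ and sends $\bigoplus_i N_i$ isomorphically onto $W_2$ via the echelon generators; since each of $\rho$, $\pi_N$, $\rho^{-1}$ individually commutes with $L(-m)$ for $m\geq 2$ (the outer two because $L(-m)P\subseteq P$ and $\rho_0$ is a module map, the middle by Proposition \ref{pi-L(-m)-comm}), the needed commutation drops out without extending $\Ind$ or re-proving the Section \ref{Section-5-2} lemmas.
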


\begin{proof}
The proof is significantly simplified by the observation of Reviewer 1. Let $M_i= M(c, h_i)$. We arrange the indices $i=1, ...., n$ as in Proposition \ref{submodule-classify} and Remark \ref{Index-rearrange}, so that the isomorphic image of $N_i$ in $W$ is generated by the singular vector $w_i$ of the form $(0, ..., 0, w_i^{(i)}, ..., w_i^{(n)})$, for $i=\alpha+1, ..., n$. With the same notations as in Remark \ref{Index-rearrange}, $w_i \neq 0$ only when $i = i_0+1, ..., i_{r-1}+ 1$. For each such $i$, since $N_i$ is a submodule of $M_i$ while $M_i$ is a Verma module in block (a), (b) or (c), $N_i$ itself is a Verma module. As a submodule of $M$, $N_i$ is generated by the singular vector $(0, ..., 0, w_i^{(i)}, 0, ..., 0)$. $w_i^{(j)}$ is a (possibly zero) singular vector in $M_j$ for every $j\geq i$. 

From what we proved Section \ref{Section-5-2}, each $N_i$ admits a complementary space $P_i$ in $M_i$ (for $i = 1, ..., \alpha$, $P_i$ can simply be chosen as $M_i$), such that for every $v\in V$, $\pi_{N_i} Y_{M_i}(v, x)|_{P_i} \in \H_2(P_i, N_i)$. Moreover, if $N_i \neq 0$, then $N_i$ is spanned by the vectors
\begin{align}
    (0, ..., 0, L(-n_1)\cdots L(-n_s) w_i^{(i)}, 0, ..., 0) \label{basis-N_i}
\end{align}
where $n_1\geq \cdots \geq n_s \geq 1$; $P_i$ is chosen as the subspace spanned by the vectors 
\begin{align}
    (0, ..., 0, L(-n_1)\cdots L(-n_s) L(-1)^{m}, 0, ..., 0) \label{basis-P_i}
\end{align}
where $n_1\geq \cdots \geq n_s \geq 2, m \leq \Ind(w_i^{(i)})$. Now we set  
$$P =\bigoplus_{i=1}^n P_i.$$ 
A dimension counting argument shows that $P$ is also a complementary space of $W$. Let $\rho_0: \bigoplus_{i=1}^n N_i \to W$ be the $V$-module isomorphism, such that
$$\rho_0(0, ..., 0, w_i^{(i)}, 0, ..., 0) = (0, ..., 0, w_i^{(i)}, w_{i}^{(i+1)}, ..., w_{i}^{(n)}). $$
Then clearly, for every $n_1\geq \cdots \geq n_s \geq 1$,  
\begin{align*}
    & \rho_0(0, ..., 0, L(-n_1)\cdots L(-n_s)w_i^{(i)}, 0, ..., 0) \\
    = & (0, ..., 0, L(-n_1)\cdots L(-n_s)w_i^{(i)}, L(-n_1)\cdots L(-n_s)w_{i}^{(i+1)}, ..., L(-n_1)\cdots L(-n_s)w_{i}^{(n)}). 
\end{align*}
We extend $\rho_0$ to a linear isomorphism $\rho: M\to M$ by setting $\rho|_P = id_P, \rho|_{N_i} = \rho_0|_{N_i}$ (note that the extended $\rho$ will not be a $V$-module homomorphism). Clearly, with respect to the basis consisting of vector of form (\ref{basis-N_i}) and (\ref{basis-P_i}), the linear map $\rho$ is triangular with all diagonal entries being 1. Thus, so is the matrix of $\rho^{-1}$, and $\rho^{-1}|_P = id_P$. We also note that both $\rho$ and $\rho^{-1}$ commutes with $L(-m)$ for every $m\geq 2$, since $L(-m)P \subseteq P$. 


Now we set 
$$\pi_N = \sum_{i=1}^n \pi_{N_i}$$
where $\pi_{N_i}$ is extended from the map $M_i\to N_i$ constructed in Section \ref{Section-5-2} to $\bigoplus_{i=1}^n M_i$ such that $\pi_{N_i}|_{M_j} = 0$ for every $1\leq i\neq j\leq n$. We note that since every $\pi_{N_i}$ commutes with $L(-m)$ for $m\geq 2$, so does $\pi_N$. Let $$\pi_W = \rho \circ  \pi_{N} \circ \rho^{-1}$$
Clearly, $\pi_W: M \to W$ is a projection and commutes with $L(-m)$ for every $m\geq 2$. It follows from an argument similar to those in Section 5.2 that for every $v\in V$, 
$$\pi_W Y_{M}(v, x) \pi_P  \in \H_2(P, W).$$
\end{proof}

\begin{rema}

Indeed, the situation for Verma modules of block (d) is much more complicated yet interesting. The preliminary attempts hints that we might need a completely different approach to show the convergence. But what we discovered in this section hints that the convergence requirements might not be a serious obstruction for the application of cohomology theory. We conjecture that category of finite length modules for the Virasoro VOA should all be in $\CC_N$. There should not be any restrictions on the central charges and the lowest weight. The conjecture should be attempted in future works. 
\end{rema}


\noindent {\small \sc Pacific Institute of Mathematics | University of Manitoba, 451 Machray Hall, 186 Dysart Road, Winnipeg, Manitoba, Canada R3T 2N2}

\noindent {\small \sc Department of Mathematics, University of Denver, 2390 S York St Rm 216, Denver 80210, USA}

\noindent{\em Email address}: fei.qi@du.edu | fei.qi.math.phys@gmail.com

The author states that there is no conflict of interest.


\begin{thebibliography}{KWAK2}

\bibitem[A]{Ash} Alexander Astashkevich, On the structure of Verma modules over Virasoro and Neveu-Schwarz algebras, \textit{Comm.
Math. Phys.} \textbf{186} (1997) 531-–562.

\bibitem[BNW]{BNW} Brian D. Boe, Daniel K. Nakano, Emilie Wiesner, Category $\mathscr{O}$ for the Virasoro algebra: cohomology and Koszulity, \textit{Pacific J. Math.} Vol. \textbf{234} (2008), No. 1, 1–21.


\bibitem[BPZ]{BPZ} Alexander. A. Belavin, Alexander M. Polyakov and Alexander B. Zamolodchikov, Infinite conformal symmetry
in two-dimensional quantum field theory, \textit{Nucl. Phys.} \textbf{B241} (1984) 333-380 

\bibitem[B]{B} Richard E. Borcherds, Vertex algebras, Kac-Moody algebras, and the
Monster, \textit{Proc. Natl. Acad. Sci. USA} \textbf{83} (1986), 3068--3071.

\bibitem[CY]{CY} Thomas Creutzig, Jinwei Yang, Tensor categories of affine Lie algebras beyond admissible level, \textit{Math. Ann.}, \textbf{380} (2021), 1991--2040. 

\bibitem[FF]{FF} Boris L. Feign, Dimitry B. Fuchs, Representations of the Virasoro algebra, \textit{Representation of Lie groups and related topics},
Advanced Study Contemp. Math., vol. \textbf{7}, Gordon and Breach, New York, 1990, pp. 465–554.


\bibitem[FLM]{FLM} Igor B. Frenkel, James Lepowsky and Arne Meurman, {\it Vertex operator algebra and the monster},  Pure and Appl. Math., 134, Academic Press, New York, 1988.

\bibitem[FQ]{FQ} Francesco Fiodalisi, Fei Qi, Fermionic constructions of meromorphic open-string vertex algebras, in preparation. 

\bibitem[H1]{Hcoh}
Yi-Zhi Huang, A cohomology theory of grading-restricted vertex algebras, {\it Comm. Math. Phys.}
{\bf  327} (2014), 279--307.

\bibitem[H2]{H1st-sec-coh}
Yi-Zhi Huang, First and second cohomologies of grading-restricted vertex algebras, {\it Comm. Math. Phys.}
{\bf  327} (2014), 261--278.

\bibitem[HKL]{HKL}
Yi-Zhi Huang, Alexander Kirillov Jr. and James Lepowsky, Braided tensor categories and extensions of vertex operator algebras, \textit{Comm. Math. Phys.} \textbf{337}, (2015) 1143--1159. 

\bibitem[HLZ]{HLZ2}
Yi-Zhi Huang, James Lepowsky, Lin Zhang, Logarithmic tensor category theory, II: Logarithmic formal calculus and properties of logarithmic intertwining operators, arXiv:1012.4196. 

\bibitem[H3]{H-MOSVA}
Yi-Zhi Huang, Meromorphic open string vertex algebras, \textit{J. Math. Phys.} \textbf{54} (2013), 051702. 

\bibitem[LL]{LL} James Lepowsky, Haisheng Li, \textit{Introduction to vertex operator algebras and their representations},
Progress in Mathematics, \textbf{227}, Birkhäuser, Boston (2004).

\bibitem[HQ]{HQ-Red} Yi-Zhi Huang, Fei Qi, The first cohomology, derivations and the reductivity of a meromorphic open-string vertex algebra, arXiv: 1804:07423. 

\bibitem[IK]{IK} Kenji Iohara, Yoshiyuki Koga, \textit{Representation theory of the Virasoro Algebra}, Springer Monographs in Mathematics, Springer-Verlag, London (2011).

\bibitem[MP]{MP} Robert V. Moody, Arturo Pianzola, \textit{Lie algebras with triangular decompositions}, Canadian Mathematical Society Series of Monographs and Advanced Texts, John Wiley \& Sons, Inc. (1995)


\bibitem[Q1]{Q-Mod} Fei Qi, On modules for meromorphic open-string vertex algebras, J. Math. Phys, \textbf{60} (2019), 031701.

\bibitem[Q2]{Q-Coh} Fei Qi, On the cohomology of meromorphic open-string vertex algebras
, New York J of Math. Volume 25 (2019), 467-517.  

\bibitem[Q3]{Q-Thesis} Fei Qi, Representation theory and cohomology theory of meromorphic open-string vertex algebras, PhD Thesis, Rutgers University, 2018.

\bibitem[Q4]{Q-2d-space-form} Fei Qi, Meromorphic open-string vertex algebras and modules over two-dimensional orientable space forms, \textit{Lett. Math. Phys. }\textbf{111} Article 27 (2021), 1--54. 


\bibitem[Q5]{Q-1st-coh} Fei Qi, First cohomologies of affine, Virasoro and lattice vertex operator algebras, \textit{Lett. Math. Phys. }\textbf{112} Article 56 (2022), 1--53. 

\bibitem[Q6]{Q-Ext-n} Fei Qi, On the extensions of the left modules for meromorphic open-string vertex algebra, II, in preparation. 

\bibitem[Q7]{Q-Fermion-2} Fei Qi, Fermionic construction of the $\frac \Z 2$-graded meromorphic open-string vertex algebra and its $\Z_2$-twisted module, II, arXiv:2309.05673. 


\end{thebibliography}
\end{document}